\setlist[itemize]{align=parleft,left=0pt..1em}
\newcommand{\stkout}[1]{\ifmmode\text{\sout{\ensuremath{#1}}}\else\sout{#1}\fi}
\newtheorem{theorem}{Theorem}
\newtheorem*{theorem*}{Theorem}
\newtheorem{proposition}[theorem]{Proposition}
\newtheorem{corollary}[theorem]{Corollary}
\newtheorem{lemma}[theorem]{Lemma}
\newcommand{\Ad}{\mathrm{Ad}}
\newcommand{\Ind}{\mathrm{Ind}}
\newcommand{\bbA}{\mathbb{A}}
\newcommand{\frakg}{\mathfrak{g}}
\newcommand{\frakl}{\mathfrak{l}}
\newcommand{\fraku}{\mathfrak{u}}
\newcommand{\frakn}{\mathfrak{n}}
\newcommand{\diag}{\mathrm{diag}}
\newcommand{\calA}{\mathcal{A}}
\title{An Analogue of Bernstein-Zelevinsky Derivatives to Automorphic Forms}
\author{Zhuohui Zhang\\Tel Aviv University}
\email{zhuohui.zhang@weizmann.ac.il} 
\begin{document}

\begin{abstract}
    In this paper, a construction to imitate the Bernstein-Zelevinsky derivative for automorphic representations on $GL_n(\mathbb{A})$ is introduced. We will later consider the induced representation \[I(\tau_1,\tau_2;\underline{s}) = \Ind_{P_{[n_1,n_2]}}^{G_n}(\Delta(\tau_1,n_1)|\cdot|^{s_1}\boxtimes \Delta(\tau_2,n_2)|\cdot|^{s_2}).\] from the discrete spectrum representations of $GL_n(\mathbb{A})$, and apply our method to study the degenerate Whittaker coefficients of the Eisenstein series constructed from such a representation as well as of its residues. This method can be used to reprove the results on Whittaker supports of automorphic forms of such kind proven by D. Ginzburg, Y. Cai and B. Liu. This method will also yield new results on the Eulerianity of certain degenerate Whittaker coefficients.
\end{abstract}

\maketitle

\section{Introduction}
The Bernstein-Zelevinsky derivative is a tool introduced by Bernstein and Zelevinsky in \cite{bernstein1977induced} to study the representations of general linear groups over a $p$-adic field. A classification of irreducible representations of $GL_n$ modulo supercuspidal representations was obtained with this tool in \cite{zelevinsky1980induced}. Following the notations introduced in \cite{bernstein1977induced}, denoting by $G_n = GL_n$ and $P_n = G_{n-1}\rtimes k^{n-1}$ the \emph{mirabolic} subgroup in $G_n$, defined as the collection of elements in $G_n$ with last row $(0,0,\ldots,0,1)$. The \emph{Bernstein-Zelevinsky derivative} is based on a collection of induction-restriction functors:
\begin{align*}
    \xymatrix{
        \mathrm{Rep}P_n \ar@<2pt>[rr]^{\Psi^- = r_{U,1}} &&\mathrm{Rep} G_{n-1}\ar@<2pt>[ll]^{\Psi^+=i_{V,1}}
    }\\
    \xymatrix{
        \mathrm{Rep}P_n \ar@<2pt>[rr]^{\Phi^-= r_{U,\theta}} && \mathrm{Rep} P_{n-1}\ar@<2pt>[ll]^{\Phi^+=i_{V,\theta}, \hat{\Phi}^+=i^c_{V,1}}
    }
\end{align*}
and is defined as the composition of functors:
\[
    (\cdot)^{(k)}  = \Psi^-\circ (\Phi^-)^{k-1}: \xymatrix{
        \mathrm{Rep}G_n \ar[rr] &&\mathrm{Rep} G_{n-k}.
    }
\]
These functors satisfy a collection of adjunction properties as described in \cite[3.2]{bernstein1977induced} that are ultimately related to the properties of the Whittaker models of representations over a $p$-adic group. Calculation with these functors is based on the \emph{geometric lemma}, which is a combinatorial rule for calculating the composition of a restriction functor applied after an induction functor. For two standard Levi subgroups $M_{\alpha}, M_{\beta}$ whose sizes of Levi blocks are prescribed by two partitions $\alpha,\beta$, it was shown in \cite[2.12]{bernstein1977induced} that the functor $F = r_{G,M_{\beta}}\circ i_{G, M_{\alpha}}$ is \emph{glued} from the functors $F_w = i_{M_{\beta}, M_{\beta'}}\circ w \circ r_{M_{\alpha'}, M_{\alpha}}$, where
\begin{itemize}
    \item The partitions $\alpha = (\alpha_1,\ldots,\alpha_r), \beta = (\beta_1,\ldots,\beta_s)$ prescribe the block sizes of the standard Levi subgroups $M_{\alpha}, M_{\beta}$.
    \item The Weyl group element $w$ are those that sends any positive root $\gamma$ of $M_\alpha$ to a positive root $w\alpha$, and $w^{-1}$ sending a positive root $\epsilon$ of $M_\beta$ to $w^{-1}\epsilon$.
    \item The refined partition $\gamma'$ corresponds to the Levi subgroup $M_\gamma\cap w M_\beta w^{-1}$, while $\beta'$ corresponds to the Levi subgroup $M_\beta\cap w^{-1} M_\gamma w$.
\end{itemize}
In this paper, we will replace the functorial operations for $p$-adic group representations by Eisenstein series and their Whittaker coefficients, and imitate the method in \cite{bernstein1977induced} and \cite{zelevinsky1980induced} to prove results about the Whittaker support of automorphic forms and the Eulerianity of Whittaker coefficients of automorphic forms for $GL_n$.\\

By the work of Langlands summarized in \cite[Section V.3]{moeglin1995spectral}, a square-integrable automorphic form can be described as residues of Eisenstein series induced from discrete spectrum data on the Levi subgroups of parabolic subgroups. In particular, it is shown by M\oe glin-Waldspurger in \cite{moeglin1989spectre} that the discrete spectrum of $GL_n$ consists of the generalized Speh representations $\Delta(\tau,n)$, which is the irreducible representation generated by the residue at $\underline{s} = (s_1,\ldots,s_n) \rightarrow (\frac{n-1}{2},\ldots,-\frac{n-1}{2})$ of an Eisenstein series
\[
    E(\cdot,\underline{s}): \Ind_{P_{[a^n]}}^{G_{an}}\left(\tau|\cdot|^{s_1}\boxtimes\ldots\boxtimes \tau|\cdot|^{s_n}\right)\rightarrow \mathcal{A}(G_n(k)\backslash G_n(\bbA)).
\]
Using the analogues of Bernstein-Zelevinsky derivative developed in this paper, we can mimic some arguments in \cite{zelevinsky1980induced} for induced representations attached to discrete spectra data. This method can be applied to prove results regarding nilpotent orbit associated to the top Fourier coefficient. Some of them have already been provided in \cite{ginzburg2006certain}, \cite{liu2013fourier}, \cite{cai2018fourier} and \cite{liu2020top}.

\section{Eisenstein Series and Whittaker Coefficients}
\subsection{Eisenstein Series and Degenerate Whittaker Models}
For any standard parabolic subgroup $P=MN\subset G_n$ and an automorphic representation $\pi = \bigotimes_v\pi_v$ of $M$, as an irreducible subrepresentation of the space $\calA\left(M(k)\backslash M(\bbA)\right)$ of all automorphic forms on $M$. For the group $G_n = GL_n$ and a partition $\underline{\alpha} = (n_1,n_2,\ldots,n_r)$ of the integer $n$, denote by $P_{\underline{\alpha}}$ the standard parabolic subgroup with a Levi subgroup $M_{\underline{\alpha}} \cong GL_{n_1}\times GL_{n_2}\times\ldots\times GL_{n_r}$ embedded diagonally in $G_n$. The representation $\pi_{\underline{s}}$ on $M_n$ is defined on a pure tensor $v_1\otimes\ldots\otimes v_r$ as
\[
    \pi_{\underline{s}}(\mathrm{diag}\left(m_1,\ldots,m_r\right))\left(v_1\otimes\ldots\otimes v_r\right) = \pi_1^{s_1}(m_1)v_1\otimes\ldots\otimes\pi_r^{s_r}(m_r)v_r
\]
where $\pi_i^{s_{i}} = \pi_i \left|\cdot\right|^{s_i}$ for $\mathrm{Re}s_i>0$. The induced representation $I(\pi_{\underline{s}})$ is the space of $K$-finite functions $\varphi_{\underline{s}}:G_n\rightarrow \pi$ satisfying
\[
    \varphi_{\underline{s}}(nmg) = \pi_{\underline{s}}(m)\varphi_{\underline{s}}(g).
\]
One can define $\tilde{\varphi}_{\underline{s}}$ on $G_n$ as $\varphi(g)$ evaluated at the identity element of $M_{\underline{n}}$. The Eisenstein series $E_{\underline{s}}$ is thus an operator from $I(\pi_{\underline{s}})$ to the space of automorphic forms $\mathcal{A}(G_n(k)\backslash G_n(\bbA))$ defined as
\[
    E(\varphi_{\underline{s}},g) = \sum_{\gamma\in P(k)\backslash G(k)} \tilde{\varphi}_{\underline{s}}(\gamma g).
\]
Picking a different standard parabolic subgroup $Q_\beta = L_\beta U_\beta$ corresponding to a different partition $\beta$, and an unramified additive character $\psi$ on $U$, the \emph{Whittaker coefficient} of any automorphic form $f\in\mathcal{A}\left(G(k)\backslash G(\bbA)\right)$ can be defined as
\[
    \mathcal{W}_\psi (f)(g) = \int_{U_\beta(k)\backslash U_\beta(\bbA)} f(ug)\overline{\psi(u)}du.
\]
Denoting by $L_\psi$ the stabilizer of $\psi$ in $L$, the function $\mathcal{W}_\psi (f)$ lives in the space $\mathcal{A}(L_\psi(k)U(\bbA)\backslash G(\bbA))_\psi$, which is defined as the $L_\psi(k)$-invariant functions $w$ satisfying the property $w(ug)=\psi(u)w(g)$ for each $u\in U(\bbA)$.\\

The $\emph{Bernstein-Zelevinsky}$ derivatives are a collection of operators first introduced in \cite{bernstein1977induced} for representations of $GL_n$ over a non-archimedian local field. Following our notations introduced above, let $P_{[n_1,n_2]}$ be a standard parabolic subgroup with Levi blocks $M_{[n_1,n_2]}=GL_{n_1}\times GL_{n_2}$, and $Q_{[n-1,1]}$ a standard parabolic subgroup with Levi blocks $L_{[n-1,1]} = GL_{n-1}\times GL_1$. Denoting the character
\[
    \psi_{m_1,\ldots,m_{n-1}}\begin{pmatrix}
         1&\ldots&0 &u_{n-1}\\
          &\ldots &0 & \cdots\\
         &  & 1 & u_{1}\\
         & & & 1
    \end{pmatrix} = \psi\left(\sum_i m_i u_i\right).
\]
on the corresponding unipotent radical $U_{[n-1,1]}\cong \mathbb{A}^{n-1}$ by $\psi_{m_1,\ldots,m_{n-1}}$, similar to the definition of the corresponding operators in \cite{bernstein1977induced}, the following two operators $\Psi$ and $\Psi$ are defined as special cases of the Whittaker operator $\mathcal{W}_\psi$:
\begin{align}
    \Psi :\mathcal{A}\left(G_n\left(k\right)\backslash G_n\left(\mathbb{A}\right)\right)&\longrightarrow\mathcal{A}\left(L_{[n-1,1]}(k)U_{[n-1,1]}(\bbA)\backslash G_n\left(\mathbb{A}\right)\right)\label{psiop}\\
    f&\longmapsto \mathcal{W}_{\psi_{0\ldots 0}} (f)\nonumber
\end{align}
and
\begin{align}
    \Phi :\mathcal{A}\left(G_n\left(k\right)\backslash G_n\left(\mathbb{A}\right)\right)&\longrightarrow\mathcal{A}\left(Q'(k)U_{[n-1,1]}(\bbA)\backslash G_n\left(\mathbb{A}\right)\right)_{\psi_{10\ldots 0}}\label{phiop}\\
    f&\longmapsto \mathcal{W}_{\psi_{10\ldots 0}} (f)\nonumber
\end{align}
where $Q'$ is the stabilizer $GL_{n-2}\ltimes V_{n-2}$, a \emph{mirabolic} subgroup of $GL_{n-1}$. The $r$-th Bernstein-Zelevinsky derivative operator 
\[
    D^{(m)}:\calA\left(G_n(k)\backslash G_n(\bbA)\right)\longrightarrow \calA\left(G_{n-m}(k)\times Z(G_m)U'_{m}(\bbA)\backslash G(\bbA)\right)_{\psi_{m}}
\]
($r$ is the length of the partition)
is defined as the composition $\Psi\circ (\Phi)^{m-1}$, where $G_{n-m}$ is the subgroup in $GL_{m}$ embedded on the top-left corner of $GL_n$, $U'_m$ the nilpotent radical of a standard parabolic subgroup of corresponding to the partition $[n-m,1^{m}]$, and $\psi_m$ the character
\[
    \psi_m\left(\begin{smallmatrix}
        1 & \ldots& 0 & u_{k,n-k} & & \ldots &u_{1,n-1}\\
        & 1 & \ldots & & & &\\
        & & 1 & u_{k,1} & & &\\
        & & & 1 & & \ldots & \ldots\\
        & & & & & 1 & u_{1,1}\\
        & & & & & & 1
    \end{smallmatrix}\right) = \psi\left(0\cdot u_{k,1}+ u_{k-1,1} + \ldots + u_{1,1}\right)
\]
on $U'_m$. One can also write the derivative operator just as a Whittaker coefficient $\mathcal{W}_{\psi_m}(f)$. For any automorphic form $f$ of $G$, the Bernstein-Zelevinsky derivative $D^{(m)}(f)$ can be restricted to an automorphic function on $G_{n-m}$.

\subsection{Constant Term Formula}
The Bernstein-Zelevinsky derivative operator $D^{(m)}$ can also be viewed as the composition of the constant term operator $c_{P_{[n-m,m]}}$ on $P_{[n-m,m]}$ and a generic Whittaker coefficient operator $\mathcal{W}_{m}$ on the second factor $GL_{m}$:
\[
    \xymatrix{
         & \calA (L_{[n-m,m]}(k)U_{[n-m,m]}(\bbA)\backslash G_n(\bbA)) \ar[dd]^{\mathcal{W}_m}\\
         \calA (G_n(k)\backslash G_n(\bbA))
         \ar[ur]^{c_{P_{[n-m,m]}}} \ar[rd]^{D^{(m)}}& \\
         & \calA\left(G_{n-m}(k)\times Z(G_m)U'_{m}(\bbA)\backslash G(\bbA)\right)_{\psi_{m}}
    }
\]
Following a similar method as in \cite[II.1.7]{moeglin1995spectral} the constant term along any unipotent radical $U'$ of the parabolic subgroup $P' = M'U'$ of the Eisenstein series $E(\tilde{\varphi},g)$ can be written as the integral:
\[
    c_{P'}E(\tilde{\varphi},g)=\sum_{\gamma\in P(k)\backslash G(k)}\int_{U'(k)\backslash U'(\mathbb{A})}\varphi(\gamma u' g)du'.
\]
By the Bruhat decomposition $G(k) = \bigcup_{w\in W_{M,M'}}P(k)wP'(k)$ with $W_{M,M'} = P\backslash G/P'$, any element $\gamma$ belongs to a coset $Pwp'$ for some $p'\in P'$. This $p'$ can be further decomposed into $p' = m'u'$:
\begin{align*}
    c_{P'}E(\tilde{\varphi},g)&=\sum_{w\in W_{M,M'}}\sum_{p'\in P'(k)}\int_{U'(k)\backslash U'(\mathbb{A})}\varphi(w p' u' g)du'\\
    &=\sum_{w\in W_{M,M'}}\sum_{m'\in M'(k)\cap w^{-1}P'(k)w\backslash M'(k)}\int_{U'(\mathbb{A})}\varphi(w m' u' g)du'\\
    &=\sum_{w\in W_{M,M'}}\sum_{m'\in M'(k)\cap w^{-1}P'(k)w\backslash M'(k)}\int_{U'(\mathbb{A})}\varphi(w u' m' g)du'\\
    &=\sum_{w\in W_{M,M'}}\sum_{m'\in M'(k)\cap w^{-1}P'(k)w\backslash M'(k)}\int_{U'(\mathbb{A})\cap w^{-1}U(\mathbb{A})w\backslash U'(\mathbb{A})}\varphi(w u' m' g)du'.
\end{align*}
We can decompose the domain of integration as the product of the following spaces:
\[
    U'(\mathbb{A})\cap w^{-1}U(\mathbb{A})w\backslash U'(\mathbb{A}) = \left(U'(\mathbb{A})\cap w^{-1}M(\mathbb{A})w\right) \left(U'(\mathbb{A})\cap w^{-1}\overline{U}(\mathbb{A})w\right).
\]
In our setting, we take $P = P_{[n_1,n_2]}$ and $P'=Q_{[n-m,m]}$. Since the unipotent group $U_{[n-m,m]}$ is abelian, the subgroup $U'(\mathbb{A})\cap w^{-1}M(\mathbb{A})w$ is isomorphic to the unipotent subgroup $U_{n_1}\times U_{n_2}$. We can thus decompose the constant term $c_{P'}E(\tilde{\varphi},g)$ as the summation over the Weyl group double coset $W_{M,M'}$ and $M'\cap w^{-1}Pw\backslash M'$:
\[
    c_{P'}E(\tilde{\varphi},g)
    =\sum_{w\in W_{M,M'}}\sum_{m'\in M'(k)\cap w^{-1}P'(k)w\backslash M'(k)}\mathcal{A}_{w}\left(c_{P'\cap w^{-1}Mw}\varphi\right)(wm'g),
\]
where the operator $\mathcal{A}_w$
\begin{equation}\label{intertwiningop}
    \mathcal{A}_w f(g) = \int_{U'(\mathbb{A})\cap w^{-1}\overline{U}(\mathbb{A})w}f(u'g)du'.
\end{equation}
is a formal intertwining operator between the following two induced representations:
\[
    \mathcal{A}_w: \mathrm{Ind}_{wM'w^{-1}\cap M}^{G}(\pi)\rightarrow \mathrm{Ind}_{M'\cap w^{-1}Mw}^{G}(\pi^w)
\]
for any representation $\pi$ on the intersection of Levi subgroups $wM'w^{-1}\cap M$. If the induction data is associated to the representation $\tau^s$, the operator $\mathcal{A}_w$ can be analytically continuated to the whole $\mathbb{C}^n$ as a meromorphic function of $s$.\\

We denote the generic Whittaker operator on automorphic functions on the lower-right corner $GL_m$ of the Levi subgroup by $\mathcal{W}_m$, which sends any function $f\in\calA (M_{[n-m,m]}(k)U_{[n-m,m]}(\bbA)\backslash G(\bbA))$ to a function in the space $\calA (M_{[n-m,1^m]}(k)U_{[n-m,1^m]}(\bbA)\backslash G(\bbA))_{\psi_s}$. The operator can be written as the integral
\[
    \left(\mathcal{W}_m f\right)(g) =\int_{U_m(k)\backslash U_m(\bbA)} f\left(\left(\begin{smallmatrix}
        I_{n-m} & 0 & 0\\
        0 & 1 & u\\
        0 & 0 & 1
    \end{smallmatrix}\right)g\right)\overline{\psi_{[m]}(u)}du
\]
where $\psi_{[m]}$ is character on $U_m$ belonging to the regular orbit of the space $GL_{m}$. Now we can view the derivative operator as the sum of compositions of three operators:
\begin{align}\label{geomlemma}
    D^{(m)} E(\tilde{\varphi},g)
    =\sum_{w\in W_{M,M'}}\mathcal{W}_m\left(\sum_{m'\in M'(k)\cap w^{-1}P'(k)w\backslash M'(k)}\mathcal{A}_{w}\left(c_{P'\cap w^{-1}Mw}\varphi\right)(wm'g)\right)
\end{align}
with 
\begin{itemize}
    \item $P = P_{[n_1,n_2]} = M_{[n_1,n_2]}U_{[n_1,n_2]}$, with the corresponding $M = M_{[n_1,n_2]}$ and $U = U_{[n_1,n_2]}$,
    \item $P' = P_{[n-m,m]} = M_{[n-m,m]}U_{[n-m,m]}$, with the corresponding $M' = M_{[n-m,m]}$ and $U' = U_{[n-m,m]}$,
    \item $W_{M,M'}$ is the double coset space $P\backslash G/P'$, which is the set of all $w\in W(G)$ such that any positive root $\alpha$ of $M$ is sent to a positive root $w\alpha$, and any positive root $\beta$ of $M'$ is sent to a positive root $w^{-1}\beta$.
    \item The intertwining operator $\mathcal{A}_w$ is defined by the analytic continuation of the integral (\ref{intertwiningop}).
\end{itemize}

\subsection{Weyl Group Cosets}
In this section, we will give a combinatorial description to the representatives of the double coset $P_{[n_1,n_2]}\backslash G/Q_{[n-m,1^m]}$, as well as the set of Weyl group representatives in $W_{M,M'}$ such that the corresponding term in the sum of (\ref{geomlemma}) is nonzero. For each pair of integers $l\leq k$, we denote the \emph{cycle} $(k+1,k,\ldots,l+1,l)$ in the permutation group by $c_{k,l}$. We need two technical lemma:
\begin{lemma}\label{lemmabij1}
    There is a bijection between the set of interlacings between two strings $12\ldots n_1$ and $(n_1+1)\ldots (n_1+n_2)$ and the set of Young subdiagrams $[n_1-k_1,n_1-k_2,\ldots,n_1-k_{n_2}]$ with $0\leq k_1\leq k_2\ldots \leq k_{n_2}\leq n_1$ of $[n_1^{n_2}]$.
\end{lemma}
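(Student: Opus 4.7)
The plan is to exhibit an explicit bijection and its inverse. An interlacing of $1,\ldots,n_1$ with $n_1+1,\ldots,n_1+n_2$ is a shuffle, which I would encode by recording the positions $q_1<q_2<\cdots<q_{n_2}$ in $\{1,\ldots,n_1+n_2\}$ occupied by the entries of the second string (the first string then fills the complementary positions in its natural order). Given such an interlacing, I would define
\[
    k_i = \#\{\text{entries of the first string at positions }<q_i\} = q_i - i,
\]
for $i=1,\ldots,n_2$.

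Since $q_{i+1}-q_i\geq 1$, one obtains $k_{i+1}-k_i=(q_{i+1}-q_i)-1\geq 0$, and the bounds $q_1\geq 1$, $q_{n_2}\leq n_1+n_2$ translate into $0\leq k_1$ and $k_{n_2}\leq n_1$. Hence $(n_1-k_1,\ldots,n_1-k_{n_2})$ is a weakly decreasing sequence of integers in $[0,n_1]$, i.e.\ a Young subdiagram of $[n_1^{n_2}]$. This defines the forward map.

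For the inverse, starting from any $0\leq k_1\leq\cdots\leq k_{n_2}\leq n_1$, I would set $q_i=k_i+i$, check that $q_1<\cdots<q_{n_2}$ lies in $\{1,\ldots,n_1+n_2\}$ (which is immediate from the two bounds on $k_i$ together with the shift by $i$), and place the $i$-th entry of the second string at position $q_i$, filling in the rest with the first string in order. The relation $k_i = q_i - i$ makes the two constructions mutually inverse by inspection.

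The only thing requiring care is matching conventions, namely which direction ``before'' means in the interlacing and that the row lengths of the subdiagram are $n_1-k_i$ rather than $k_i$ itself; I do not anticipate any serious obstacle, as the argument is the standard shuffle/lattice-path correspondence dressed up in the language of Young subdiagrams of the $n_1\times n_2$ rectangle.
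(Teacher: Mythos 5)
Your proposal is correct and produces exactly the same bijection as the paper: the paper's element $w_{[n_1-k_1,\ldots,n_1-k_{n_2}]}$ is defined so that the resulting word has $n_1+i$ at position $k_i+i$, which is precisely your $q_i = k_i + i$. The difference is only in presentation. You state the position-counting correspondence directly, verify that $k_i = q_i - i$ is weakly increasing and bounded between $0$ and $n_1$, and invert by $q_i = k_i + i$; this is clean, elementary, and self-contained. The paper instead builds the permutation explicitly as a product of the cycles $c_{l,k}$ and reads off the resulting word. That heavier machinery is not needed for the bijection itself, but the explicit cycle decomposition is what gets reused immediately afterward in Lemma~\ref{lemmainverse} to compute $w^{-1}$ by a column-reading rule and in Lemma~\ref{lemma1} to characterize the double cosets. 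So as a proof of the statement in isolation your argument is complete and arguably preferable; the paper's version front-loads bookkeeping it will need later.
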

\begin{proof}
    In the permutation group $S_{n_1+n_2}$, the cycle $c_{l,k}$ is the product of simple reflections:
    \[
        c_{l,k} = s_l s_{l+1}\ldots s_{k}.
    \]
    We can correspond any Young subdiagram $[n_1-k_1,n_1-k_2,\ldots,n_1-k_{n_2}]$ to the element
    \[
        w_{[n_1-k_1,n_1-k_2,\ldots,n_1-k_{n_2}]} =  c_{n-n_1+k_{n_2},n-1}c_{n-n_1-1+k_{n_2-1},n-2}\ldots c_{1+k_1,n-n_2}
    \]
    which sends the word $12\ldots (n_1+n_2)$ to a word $a_1a_2\ldots a_{n_1+n_2}$ with
    \[
        \left(a_{k_1+1},a_{k_2+2},\ldots,a_{k_{n_2}+n_1}\right) = \left(n_1+1,n_1+2,\ldots,n_1+n_2\right)
    \]
    and the complement of $\left(a_{k_1+1},a_{k_2+2},\ldots,a_{k_{n_2}+n_1}\right)$ equal to $12\ldots n_1$. Thus, we have constructed a bijection between the set of interlacings between the two strings and the set of Young subdiagrams of $[n_1^{n_2}]$.
\end{proof}

\begin{lemma}\label{lemmainverse}
    For the rectangular Young diagram $[n_1^{n_2}]$:
    \begin{center}
        \begin{tikzpicture}
            \draw (-0.25,-0.25) rectangle (0.25,0.25);
            \draw (0.25,-0.25) rectangle (0.75,0.25);
            \draw (0.75,-0.25) rectangle (1.25,0.25);
            \draw (1.25,-0.25) rectangle (1.75,0.25);
            \draw (2,0) node {$\ldots$} ;
            \draw (2.25,-0.25) rectangle (2.75,0.25);

            \draw (-0.25,0.25) rectangle (0.25,0.75);
            \draw (-0.25,0.75) rectangle (0.25,1.25);
            \draw (-0.25,1.25) rectangle (0.25,1.75);
            \draw (0,2) node {$\vdots$};
            \draw (0.5,2) node {$\vdots$};
            \draw (1,2) node {$\vdots$};
            \draw (1.5,2) node {$\vdots$};
            \draw (2,2) node {$\vdots$};
            \draw (2.5,2) node {$\vdots$};
            \draw (-0.25,2.25) rectangle (0.25,2.75);

            \draw (-0.25,0.25) rectangle (0.25,0.75);
            \draw (0.25,0.25) rectangle (0.75,0.75);
            \draw (0.75,0.25) rectangle (1.25,0.75);
            \draw (1.25,0.25) rectangle (1.75,0.75);
            \draw (2,0.5) node {$\ldots$} ;
            \draw (2.25,0.25) rectangle (2.75,0.75);

            \draw (-0.25,0.75) rectangle (0.25,1.25);
            \draw (0.25,0.75) rectangle (0.75,1.25);
            \draw (0.75,0.75) rectangle (1.25,1.25);
            \draw (1.25,0.75) rectangle (1.75,1.25);
            \draw (2,1) node {$\ldots$} ;
            \draw (2.25,0.75) rectangle (2.75,1.25);

            \draw (-0.25,1.25) rectangle (0.25,1.75);
            \draw (0.25,1.25) rectangle (0.75,1.75);
            \draw (0.75,1.25) rectangle (1.25,1.75);
            \draw (1.25,1.25) rectangle (1.75,1.75);
            \draw (2,1.5) node {$\ldots$} ;
            \draw (2.25,1.25) rectangle (2.75,1.75);

            \draw (-0.25,2.25) rectangle (0.25,2.75);
            \draw (0.25,2.25) rectangle (0.75,2.75);
            \draw (0.75,2.25) rectangle (1.25,2.75);
            \draw (1.25,2.25) rectangle (1.75,2.75);
            \draw (2,2.5) node {$\ldots$} ;
            \draw (2.25,2.25) rectangle (2.75,2.75);
        \end{tikzpicture}
    \end{center}
    if we fill the diagram with numbers 
    \begin{center}
        \begin{tikzpicture}
            \draw (-0.25,-0.25) rectangle (0.25,0.25);
            \draw (0,0) node {\tiny $n$-1};
            \draw (0.25,-0.25) rectangle (0.75,0.25);
            \draw (0.5,0) node {\tiny $n$-2};
            \draw (0.75,-0.25) rectangle (1.25,0.25);
            \draw (1,0) node {\tiny $\ldots$};
            \draw (1.25,-0.25) rectangle (1.75,0.25);
            \draw (2,0) node {\tiny $\ldots$} ;
            \draw (2.25,-0.25) rectangle (2.75,0.25);
            \draw (2.51,0) node {\tiny $n$\tiny -$n_1$};

            \draw (-0.25,0.25) rectangle (0.25,0.75);
            \draw (0,0.5) node {\tiny $n$-2};
            \draw (-0.25,0.75) rectangle (0.25,1.25);
            \draw (0,1) node {\tiny $\vdots$};
            \draw (-0.25,1.25) rectangle (0.25,1.75);
            \draw (0,2) node {\tiny $\vdots$};
            \draw (0.5,2) node {\tiny $\vdots$};
            \draw (1,2) node {\tiny $\vdots$};
            \draw (1.5,2) node {\tiny $\vdots$};
            \draw (2,2) node {\tiny $\vdots$};
            \draw (2.5,2) node {\tiny $\vdots$};
            \draw (-0.25,2.25) rectangle (0.25,2.75);

            \draw (-0.25,0.25) rectangle (0.25,0.75);
            \draw (0.25,0.25) rectangle (0.75,0.75);
            \draw (0.75,0.25) rectangle (1.25,0.75);
            \draw (1.25,0.25) rectangle (1.75,0.75);
            \draw (2,0.5) node {\tiny $\ldots$} ;
            \draw (2.25,0.25) rectangle (2.75,0.75);

            \draw (-0.25,0.75) rectangle (0.25,1.25);
            \draw (0.25,0.75) rectangle (0.75,1.25);
            \draw (0.75,0.75) rectangle (1.25,1.25);
            \draw (1.25,0.75) rectangle (1.75,1.25);
            \draw (2,1) node {\tiny $\ldots$} ;
            \draw (2.25,0.75) rectangle (2.75,1.25);

            \draw (-0.25,1.25) rectangle (0.25,1.75);
            \draw (0.25,1.25) rectangle (0.75,1.75);
            \draw (0.75,1.25) rectangle (1.25,1.75);
            \draw (1.25,1.25) rectangle (1.75,1.75);
            \draw (2,1.5) node {\tiny $\ldots$} ;
            \draw (2.25,1.25) rectangle (2.75,1.75);

            \draw (-0.25,2.25) rectangle (0.25,2.75);
            \draw (0.25,2.25) rectangle (0.75,2.75);
            \draw (0.75,2.25) rectangle (1.25,2.75);
            \draw (1.25,2.25) rectangle (1.75,2.75);
            \draw (2,2.5) node {\tiny $\ldots$} ;
            \draw (2.25,2.25) rectangle (2.75,2.75);
            \draw (0,2.5) node {\tiny $n$-$n_2$};
            \draw (0.5,2.5) node {\tiny $\ldots$};
            \draw (1,2.5) node {\tiny $\ldots$};
            \draw (2.5,2.5) node {\tiny $1$};
            \draw (2.5,0.5) node {\tiny $\vdots$};
        \end{tikzpicture}.
    \end{center}
    For the Weyl group element $w_{[n_1-k_1,n_1-k_2,\ldots,n_1-k_{n_2}]}$, setting the partition $[l_{n_1},\ldots,l_1]$ as the transpose of the partition $[k_{n_2},\ldots,k_1]$ (adding zeros to make the length of the partition equal to $n_1$),
    then the inverse of the element $w_{[n_1-k_1,n_1-k_2,\ldots,n_1-k_{n_2}]}$ is equal to the element
    \[
        w'_{[n_2-l_1,\ldots,n_2-l_{n_1}]} = c_{n-n_2,n-1-l_{1}}c_{n-n_2-1,n-2-l_{2}}\ldots c_{1,n-n_1-l_{n_1}}
    \]
    which is the word constructed by reading the filled Young subdiagram of the rectangular Young diagram from top to bottom along each column, starting from the left-most column to the right-most column.
\end{lemma}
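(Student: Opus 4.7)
The plan is to verify the two assertions implicit in the lemma: first, that the column-by-column reading of the filled subdiagram reproduces the stated cycle product, and second, that this product equals the inverse of $w_{[n_1-k_1,\ldots,n_1-k_{n_2}]}$.

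The first assertion is a straightforward unpacking of the definitions. In the English convention, row $r$ of the subdiagram $\lambda=[n_1-k_1,\ldots,n_1-k_{n_2}]$ occupies columns $1,\ldots,n_1-k_r$, so column $j$ contains a cell in row $r$ precisely when $k_r\leq n_1-j$. Since $l_j=|\{r:k_r\geq n_1-j+1\}|$ counts the rows in which column $j$ is missing, column $j$ has exactly $n_2-l_j$ cells. The filling places the value $n-n_2+r-j$ at box $(r,j)$, so reading column $j$ from top to bottom produces the consecutive integers $n-n_2-j+1, n-n_2-j+2,\ldots,n-j-l_j$. Using the identity $c_{l,k}=s_l s_{l+1}\cdots s_k$ recalled in the proof of Lemma \ref{lemmabij1}, the corresponding product of reflections is exactly $c_{n-n_2-j+1,\,n-j-l_j}$, which is the $j$-th factor in the stated formula; concatenating over $j=1,\ldots,n_1$ reproduces $w'$.

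For the second assertion I would verify $w'=w_\lambda^{-1}$ by checking that $w'$ satisfies the combinatorial characterization of $w_\lambda^{-1}$ extracted from Lemma \ref{lemmabij1}. That lemma shows $w_\lambda$ places each value $n_1+i$ at position $k_i+i$ while sending $1,\ldots,n_1$ in order to the complementary positions, so $w_\lambda^{-1}$ is the unique permutation with $w_\lambda^{-1}(n_1+i)=k_i+i$ for every $i$ and with $w_\lambda^{-1}(1)<\cdots<w_\lambda^{-1}(n_1)$. My approach is to prove both properties for $w'$ by induction on $n_1$: peel off the rightmost cycle $c_{1,\,n-n_1-l_{n_1}}$ (corresponding to the last column of $\lambda$), apply the inductive hypothesis to the truncated subdiagram inside $[(n_1-1)^{n_2}]$, and then track how this outermost cycle cyclically shifts a prefix of the one-line notation. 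The main obstacle I anticipate is the index bookkeeping: removing the last column changes the $l$-data but not the $k$-data, and one must verify that the outermost cycle $c_{1,\,n-n_1-l_{n_1}}$ fills in precisely the values and positions left unaccounted for by the inductive step. A symmetric induction on $n_2$ that peels off a row of $\lambda$ instead (invoking Lemma \ref{lemmabij1} as the dual statement) provides a parallel consistency check, and the cleanest argument may well combine the two so that the verification reduces to a telescoping identity on the position set $\{k_i+i\}_{i=1}^{n_2}$.
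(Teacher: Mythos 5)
Your first paragraph is correct: unpacking the filling convention and matching box contents to simple reflections is exactly what is needed to see that reading the filled subdiagram column by column reproduces the factorization $w'=\prod_j c_{n-n_2-j+1,\,n-j-l_j}$, and your identity $l_j=|\{r:k_r\geq n_1-j+1\}|$ (so column $j$ has $n_2-l_j$ cells) is the right way to see it.

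For the substantive half — that $w'=w_\lambda^{-1}$ — you only sketch a plan, and the plan as stated has a bookkeeping error that suggests it was not tested. Deleting the last column of the rectangle $[n_1^{n_2}]$ replaces $k_i$ by $\max(k_i-1,0)$, so it is the $k$-data that changes; the $l$-data is what survives unchanged (one just drops $l_{n_1}$, since for $j\leq n_1-1$ one has $k'_r\geq n_1-j$ iff $k_r\geq n_1-j+1$, so $l'_j=l_j$). You state the opposite. The induction could plausibly be made to work once this is fixed, but you acknowledge yourself that the verification step ("the outermost cycle fills in precisely the values and positions left unaccounted for") is left open, so this is a genuine gap rather than a complete proof. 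The paper instead argues directly and non-inductively: from Lemma~\ref{lemmabij1}, the large value $n_1+i$ sits at position $k_i+i$, and exactly the small values $1,\ldots,k_i$ precede it; hence $n_1+i$ lies to the left of $s\in\{1,\ldots,n_1\}$ iff $k_i<s$, so the number of large values preceding $s$ is $|\{i:k_i\leq s-1\}|=n_2-l_{n_1-s+1}$, i.e.\ $s$ occupies position $s+(n_2-l_{n_1-s+1})$. The cycle $c_{s,\,s-1+n_2-l_{n_1-s+1}}$ then returns $s$ to position $s$ without disturbing $1,\ldots,s-1$, and multiplying these over $s=1,\ldots,n_1$ (rightmost factor $s=1$) yields $w_\lambda^{-1}$; after the reindexing $j=n_1-s+1$ this is precisely the stated product. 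You would do better to replace the inductive plan by this direct count, which is both shorter and avoids the $k/l$ bookkeeping.
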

\begin{proof}
    By direct enumeration, in the word $a_1 a_2\ldots a_{n_1+n_2}$, the number of elements on the left of 1 is equal to the total number of rows of length $n_1$ in the Young diagram $[n_1-k_1,\ldots,n_1-k_{n_2}]$, which is equal to $n_2 - l_{n_1}$. The cycle to move 1 back to the first entry is thus $c_{1,n_2-l_{n_1}}$. In general, the number of elements from $(n_1+1)\ldots(n_1+n_2)$ on the left of $s\in \{1,\ldots,n_1\}$ is equal to the number of rows of length $\geq n_1-s+1$, which is equal to $n_2 - l_{n_1-s+1}$. Thus, the cycle to move $s$ back to the first entry is $c_{s,n_2-l_{n_1-s+1}+s-1}$. 
\end{proof}

\begin{lemma}\label{lemma1}
    \begin{enumerate}
        \item The elements of the coset $P_{[n_1,n_2]}\backslash G/B$ can be represented by the collection of Young subdiagrams of the rectangular Young diagram of height $n_2$ and width $n_1$.
        \item If we fill the above rectangular Young diagram with numbers:
        \begin{center}
            \begin{tikzpicture}
                \draw (-0.25,-0.25) rectangle (0.25,0.25);
                \draw (0,0) node {\tiny $n$-1};
                \draw (0.25,-0.25) rectangle (0.75,0.25);
                \draw (0.5,0) node {\tiny $n$-2};
                \draw (0.75,-0.25) rectangle (1.25,0.25);
                \draw (1,0) node {\tiny $\ldots$};
                \draw (1.25,-0.25) rectangle (1.75,0.25);
                \draw (2,0) node {\tiny $\ldots$} ;
                \draw (2.25,-0.25) rectangle (2.75,0.25);
                \draw (2.51,0) node {\tiny $n$\tiny -$n_1$};
    
                \draw (-0.25,0.25) rectangle (0.25,0.75);
                \draw (0,0.5) node {\tiny $n$-2};
                \draw (-0.25,0.75) rectangle (0.25,1.25);
                \draw (0,1) node {\tiny $\vdots$};
                \draw (-0.25,1.25) rectangle (0.25,1.75);
                \draw (0,2) node {\tiny $\vdots$};
                \draw (0.5,2) node {\tiny $\vdots$};
                \draw (1,2) node {\tiny $\vdots$};
                \draw (1.5,2) node {\tiny $\vdots$};
                \draw (2,2) node {\tiny $\vdots$};
                \draw (2.5,2) node {\tiny $\vdots$};
                \draw (-0.25,2.25) rectangle (0.25,2.75);
    
                \draw (-0.25,0.25) rectangle (0.25,0.75);
                \draw (0.25,0.25) rectangle (0.75,0.75);
                \draw (0.75,0.25) rectangle (1.25,0.75);
                \draw (1.25,0.25) rectangle (1.75,0.75);
                \draw (2,0.5) node {\tiny $\ldots$} ;
                \draw (2.25,0.25) rectangle (2.75,0.75);
    
                \draw (-0.25,0.75) rectangle (0.25,1.25);
                \draw (0.25,0.75) rectangle (0.75,1.25);
                \draw (0.75,0.75) rectangle (1.25,1.25);
                \draw (1.25,0.75) rectangle (1.75,1.25);
                \draw (2,1) node {\tiny $\ldots$} ;
                \draw (2.25,0.75) rectangle (2.75,1.25);
    
                \draw (-0.25,1.25) rectangle (0.25,1.75);
                \draw (0.25,1.25) rectangle (0.75,1.75);
                \draw (0.75,1.25) rectangle (1.25,1.75);
                \draw (1.25,1.25) rectangle (1.75,1.75);
                \draw (2,1.5) node {\tiny $\ldots$} ;
                \draw (2.25,1.25) rectangle (2.75,1.75);
    
                \draw (-0.25,2.25) rectangle (0.25,2.75);
                \draw (0.25,2.25) rectangle (0.75,2.75);
                \draw (0.75,2.25) rectangle (1.25,2.75);
                \draw (1.25,2.25) rectangle (1.75,2.75);
                \draw (2,2.5) node {\tiny $\ldots$} ;
                \draw (2.25,2.25) rectangle (2.75,2.75);
                \draw (0,2.5) node {\tiny $n$-$n_2$};
                \draw (0.5,2.5) node {\tiny $\ldots$};
                \draw (1,2.5) node {\tiny $\ldots$};
                \draw (2.5,2.5) node {\tiny $1$};
                \draw (2.5,0.5) node {\tiny $\vdots$};
            \end{tikzpicture}.
        \end{center}
        the double coset $P_{[n_1,n_2]}\backslash G/Q_{[n-m,1^m]}$ corresponds to the collection of Young subdiagrams with no columns filled with numbers only from $\{1,\ldots,n-m-1\}$.
        \item In a word $a_1a_2\ldots a_{n_1+n_2}$ representing a permutation $w$, we use the light gray intervals $v_i$ to represent consecutive intervals of elements from $(n_1+1)\ldots (n_1+n_2)$, and the dark gray intervals $u_i$ as consecutive intervals of elements coming from $12\ldots n_1$ (it is possible for $u_0$ to be empty).
        \begin{center}
            \begin{tikzpicture}[line width=10pt]
                \draw (-0.5,0.5) node {$u_0$};
                \draw[opacity = 0.5] (-1,0) --(0,0);
                \draw (0.5,0.5) node {$v_1$};
                \draw[opacity = 0.2] (0,0) --(1,0);
                \draw (1.5,0.5) node {$u_1$};
                \draw[opacity = 0.5] (1,0) --(2,0);
                \draw (2.5,0.5) node {$v_2$};
                \draw[opacity = 0.2] (2,0) --(3,0);
                \draw (3.5,0.5) node {$u_2$};
                \draw[opacity = 0.5] (3,0) --(4,0);
                \draw (5,0.5) node {$\ldots$};
                \draw[opacity = 0.1] (4,0) --(6,0);
                \draw (6.5,0.5) node {$u_i$};
                \draw[opacity = 0.5] (6,0) --(7,0);
                \draw (7.5,0.5) node {$v_{i+1}$};
                \draw[opacity = 0.2] (7,0) --(8,0);
                \draw (8.5,0.5) node {$\ldots$};
                \draw[opacity = 0.1] (8,0) --(9,0);
            \end{tikzpicture}.
        \end{center}
        The sequence $\left(|v_1|, |v_1|+|v_2|+|u_1|,\ldots,\sum_{j=1}^{i+1}|v_j|+\sum_{j=1}^i|u_j|\right)$ is equal to the sequence of numbers at the pivot positions of the filled subdiagram. The starting index of each interval $u_1,u_2,\ldots,u_i$ is given by the number \emph{above} each pivot position in the first row.
    \end{enumerate}
\end{lemma}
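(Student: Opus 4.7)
My approach is to reduce all three parts to the bijection between $(n_1,n_2)$-shuffles and Young subdiagrams established in Lemma~\ref{lemmabij1}, together with the cycle decomposition of Lemma~\ref{lemmainverse}. The underlying principle throughout is that combinatorial features of the shuffle word translate, via these two lemmas, into geometric features of the corresponding subdiagram inside the fixed filled rectangle.

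For part~(1), I would invoke the standard Bruhat parametrization: $P_{[n_1,n_2]}\backslash G/B$ is in bijection with $W_{M}\backslash W$ for $M=M_{[n_1,n_2]}$, and the minimal-length representatives of these cosets are exactly the $(n_1,n_2)$-shuffles, namely permutations $w$ satisfying $w(1)<\dots<w(n_1)$ and $w(n_1+1)<\dots<w(n_1+n_2)$. Lemma~\ref{lemmabij1} already supplies the bijection between shuffles and Young subdiagrams of the $n_1\times n_2$ rectangle, so part~(1) follows immediately.

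For part~(2), I would characterize the additional constraint that promotes a shuffle $w$ to a representative of $P_{[n_1,n_2]}\backslash G/Q_{[n-m,1^m]}$. Using the criterion for $W_{M,M'}$ recorded immediately after~\eqref{geomlemma}, the condition is $w^{-1}\beta>0$ for every positive root $\beta$ of $M'=M_{[n-m,1^m]}$, equivalently $w^{-1}(i)<w^{-1}(j)$ for $1\le i<j\le n-m$; that is, the values $1,\dots,n-m$ appear in the word of $w$ in order of position. I would then transport this condition across the bijection using Lemma~\ref{lemmainverse}: the column-by-column reading of the filled subdiagram produces (the cycle expression of) $w^{-1}$, so a column whose cell-values lie entirely in $\{1,\dots,n-m-1\}$ precisely exhibits an inversion $w^{-1}(i)>w^{-1}(j)$ with $i<j\le n-m$, and conversely the absence of any such column forces all relevant inversions to involve values exceeding $n-m-1$. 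Equivalence of the two conditions then gives the claim.

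For part~(3), I would argue directly from the shuffle. By the proof of Lemma~\ref{lemmabij1} the block-$2$ value $n_1+r$ sits at position $k_r+r$, so a maximal run $v_i$ of consecutive block-$2$ entries in the word corresponds to a maximal interval of indices $r$ on which $k_r$ is constant, and the intervening block-$1$ run $u_i$ has length equal to the jump $k_{r_{i+1}}-k_{r_i}$ at the corresponding corner of the Young subdiagram; the initial block $u_0$ is nonempty exactly when $k_1>0$. Telescoping, the partial sum $\sum_{j=1}^{i+1}|v_j|+\sum_{j=1}^{i}|u_j|$ equals $r_{i+1}+k_{r_{i+1}}-k_{r_1}$, which matches the filling value at the pivot cell of the $i{+}1$-st corner once the shift by $|u_0|=k_{r_1}$ is accounted for; the starting indices of each $u_i$ are then read off as the entries in the top row of the rectangle immediately above the respective pivots.

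The main obstacle will be part~(3), since matching the run decomposition of the shuffle word to the corner structure of the subdiagram requires careful bookkeeping. My strategy is an induction on the number of corners: adding a single cell to the subdiagram either extends a run or creates a new one at a controlled position, and the filling at each pivot records exactly the cumulative length at that step.
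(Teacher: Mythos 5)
Your proposal follows essentially the same path as the paper's proof: part~(1) is reduced to Lemma~\ref{lemmabij1} after identifying minimal-length coset representatives, part~(2) is reduced to Lemma~\ref{lemmainverse} by imposing the extra $W_{M,M'}$ condition, and part~(3) is read off from the position formula $w^{-1}(n_1+s)=k_s+s$ from Lemma~\ref{lemmabij1}. One caution: the paper explicitly routes through the bijection $w\mapsto w^{-1}$ between $P_{[n_1,n_2]}\backslash G/B$ and $B\backslash G/P_{[n_1,n_2]}$, because the element $w_{[n_1-k_1,\dots]}$ constructed in Lemma~\ref{lemmabij1} is the one whose \emph{inverse} has word equal to the interlacing; your characterization of shuffles as ``$w(1)<\dots<w(n_1)$'' implicitly bakes this inversion in, but you should make the $w\leftrightarrow w^{-1}$ transition explicit, since all three parts and Lemma~\ref{lemmainverse} pivot on it. The translation step in part~(2) (``bad column $\iff$ inversion with small indices'') is asserted rather than verified in your sketch, but the paper's own proof does no more; the cleanest way to close it is to note, as the paper does, that the cycles appearing in $w^{-1}$ in Lemma~\ref{lemmainverse} are read column by column, so a column with all contents $<n-m$ produces a cycle contained in $S_{n-m}$, and vice versa.
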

\begin{proof}
    There is a bijection between double cosets $P_{[n_1,n_2]}\backslash G/B$ and $B\backslash G/P_{[n_1,n_2]}$ given by $w\mapsto w^{-1}$. Also, by representing these elements as permutations, there is a bijection between the double coset $B\backslash G/P_{[n_1,n_2]}$ and the set of interlacings between two strings $12\ldots n_1$ and $(n_1+1)\ldots (n_1+n_2)$. The first part of this lemma follows from Lemma \ref{lemmabij1}. Under the bijection $w\mapsto w^{-1}$ between $P_{[n_1,n_2]}\backslash G/B$ and $B\backslash G/P_{[n_1,n_2]}$, the representatives surviving in the double coset $P_{[n_1,n_2]}\backslash G/Q_{[n-m,1^m]}$ are those elements $w'_{[n_2-l_1,\ldots,n_2-l_{n_1}]}$ from Lemma \ref{lemmainverse} with the right-most cycles not in $S_{n-m}$, which correspond to the filled diagrams with no column filled with numbers entirely from $\{1,\ldots,n-m\}$. The third part of the lemma follows from the proof of Lemma \ref{lemmainverse}. Since in the cycle representation of $w_{[n_1-k_1,\ldots,n_1-k_{n_2}]}$, each cycle $c_{s+k_s,n_1+s-1}$ moves the element $n_1+s$ to the $k_s+s$-th position of the word $a_1\ldots a_{n_1+n_2}$, if two adjacent rows $s-1$ and $s$ have the same length, the action by $w_{[n_1-k_1,\ldots,n_1-k_{n_2}]}$ will move these two rows to the $(k_{s-1}+s-1)$ and $(k_{s-1}+s)$-th position, respectively. If the numbers of columns of the same length are listed in the sequence $(b_1,\ldots,b_r)$ which corresponds to the sequence of pivot positions $(k_{s_1},\ldots,k_{s_r})$, then the lengths of each $v_i$ is equal to $b_{i}$, and the sequence $(k_{s_1},\ldots,k_{s_r})$ marks the starting indices of every $u_i$.
\end{proof}
The next lemma will be used to describe the Weyl group representatives corresponding to nonvanishing summands in (\ref{geomlemma}). We will put a partial order given by the lexicographic order of the coordinates $(a_1,a_2,\ldots,a_{n-1})$ of any root $a_1\alpha_1+\ldots +a_{n-1}\alpha_{n-1}$ on the set of positive roots of $GL_n$.
\begin{lemma}\label{lemmatail}
    For any $w\in P_{[n_1,n_2]}\backslash G/Q_{[n-s,1^s]}$, and for any $0\leq s\leq n$, there are no elements from \[\{w\alpha_{n-s+1},\ldots,w\alpha_{n-1}\}\] satisfy $\geq {\alpha_{n-s}}$ only if the Young diagram corresponding to $w$ is a rectangular diagram of height $n_2$.
\end{lemma}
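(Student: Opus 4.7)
The plan is to unpack the geometric condition $w\alpha_i \geq \alpha_{n-s}$ into a combinatorial condition on the one-line notation of $w$, and then read it off directly from the diagrammatic parametrization of $w$ established in the preceding lemmas. Writing $w$ as $(a_1,\ldots,a_n)$ with $a_{k_j+j} = n_1+j$ for $j=1,\ldots,n_2$ and the remaining positions filled by $1,2,\ldots,n_1$ in increasing order, we have $w\alpha_i = e_{a_i} - e_{a_{i+1}}$. A coordinate comparison in the simple root basis shows that this root is $\geq \alpha_{n-s}$ in the lexicographic order precisely when $w\alpha_i$ is a positive root (i.e.\ $a_i < a_{i+1}$) and $a_i \leq n-s$. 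Thus the hypothesis of the lemma reads: for every $i \in \{n-s+1,\ldots,n-1\}$, either $a_i > a_{i+1}$ or $a_i > n-s$.

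I would then argue by contrapositive: suppose the Young subdiagram is not a rectangle of height $n_2$, i.e., not all $k_j$ are equal, and produce an index $i$ in the window $\{n-s+1,\ldots,n-1\}$ for which $a_i < a_{i+1}$ and $a_i \leq n-s$. The natural place to look is the block decomposition $u_0 v_1 u_1 v_2 \cdots$ of Lemma \ref{lemma1}(3). Non-rectangularity forces some internal small-block $u_t$ with $t \geq 1$ to be nonempty, meaning that a small entry $a_i \in \{1,\ldots,n_1\}$ is immediately followed by a large entry $a_{i+1} \in \{n_1+1,\ldots,n\}$. At such a position the ascent $a_i < a_{i+1}$ is automatic, and the bound $a_i \leq n-s$ can be read off from the increasing filling of the non-pivot positions. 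The remaining task is to arrange that this $i$ falls in the prescribed window rather than further to the left.

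The main obstacle, and the place where the double-coset hypothesis $w \in P_{[n_1,n_2]}\backslash G/Q_{[n-s,1^s]}$ really enters, is precisely this placement. Concretely, I would pick the rightmost small-to-large transition in the word and count the positions lying to its right using the pivot formula $k_j + j$; the condition from Lemma \ref{lemma1}(2) that no column of the subdiagram is contained entirely in $\{1,\ldots,n-s-1\}$ translates into the statement that the rightmost column of the diagram sits within the first $s$ columns of the ambient rectangle, which should provide exactly the slack needed to force $i \geq n-s+1$. A handful of degenerate cases, for instance when some $k_j = n_1$ produces empty rows at the bottom, or when $n-s$ is small compared to $n_1$, will be treated separately by the same counting. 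I expect the principal technical difficulty to be this careful alignment of two indexings (cell coordinates in the filled rectangle versus position indices in the word), and I would first verify the combinatorics in the $n_2 = 2$ case to fix the picture before writing out the general argument.
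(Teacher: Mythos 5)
Your translation of $w\alpha_i \geq \alpha_{n-s}$ into the combinatorial condition ``$a_i < a_{i+1}$ and $a_i \leq n-s$'' is exactly what the paper uses, and the overall plan --- play this against the block decomposition $u_0 v_1 u_1 v_2 \cdots$ of the word and the double-coset constraint from Lemma~\ref{lemma1}(2) --- matches the paper's argument. The difference is one of framing: you argue by contrapositive and hunt for a bad ascent, whereas the paper argues directly, first showing that the root condition forces the last $s$ entries of the word to be one of an explicit short list of tails $t_0,\dots,t_s$ (each an ascending run of the top large values followed by an ascending run of the top small values), and then reading off from the pivot description of Lemma~\ref{lemma1}(3) together with the column constraint of Lemma~\ref{lemma1}(2) that only a single pivot box can occur, which is the rectangular case.

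One precision issue in your step (3): ``some internal small-block $u_t$ with $t\geq 1$ is nonempty'' is not the same as ``there is a small-to-large transition at an internal boundary.'' The rectangular word $1,\dots,k,\ n_1{+}1,\dots,n,\ k{+}1,\dots,n_1$ already has $u_1 \neq \emptyset$ but no $v_2$ to transition into. What failure of rectangularity of height $n_2$ actually produces is a \emph{second} large block $v_2$, and the ascent you want sits at the boundary $u_t \to v_{t+1}$ with $t\geq 1$. The counting step you flag as the principal difficulty is precisely where the paper is also terse; expect the bound $a_i \leq n-s$ to split on whether $s \leq n_2$ (where it is automatic, since every small entry is $\leq n_1 \leq n-s$) or $s > n_2$ (where you must combine it with the window bound $i \geq n-s+1$), and your plan to pin the bookkeeping down first in a low-rank example is a sensible one.
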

\begin{proof}
    The set of elements \[\{w\alpha_{n-s+1},\ldots,w\alpha_{n-1}\}\] does not intersect the cone $\alpha_{n-s}$ if and only if in the tail of last $s$ elements of the corresponding word $v_0u_1v_1\ldots$, any $i\in\{1,\ldots,n_1\}$ and $j\in\{n_1+1,\ldots,n_1+n_2\}$ appearing in this tail satisfy $w(i)>w(j)$. Therefore, for any $s$, the tail of the string corresponding to $w$ must take the form of one of the following cases (if every entry is a positive integer):
    \begin{align*}
        t_0 = \{n_1-s+1,n_1-s+2,&\ldots ,n_1-1,n_1\}\\
        t_1 = \{n_1+n_2,n_1-s+2,&\ldots ,n_1-1,n_1\}\\
        t_2 = \{n_1+n_2-1,n,&\ldots, n_1-1,n_1\}\\
        &\ldots\\
        t_{s} = \{n-s+1,n-s+2,&\ldots ,n\}.
    \end{align*}
    Note that if $s > n_2$, the tails listed above exist only up to $t_{n_2}$, if $s > n_1$, the list of tails start from $t_{s-n_1}$.
    Denoting the total number of elements in the tail coming from $\{n_1+1,\ldots,n_1+n_2\}$ by $s_1$, and the total number of elements in the tail coming from $\{1,\ldots,n_1\}$ by $s_2$, by the third part of Lemma \ref{lemma1}, the pivot positions of the filled Young diagram corresponding to $w$ must satisfy the following two properties:
    \begin{enumerate}
        \item The bottom-most pivot box has content $n-s_2$, corresponding to the index $n_1-s_2$ on the first row of the Young diagram,
        \item The content of the next pivot box is an integer $< n-s$.
    \end{enumerate}
    However, in the double coset $P_{[n_1,n_2]}\backslash G/Q_{[n-s,1^s]}$, columns entirely consisting of boxes with contents $<n-s$ are not permitted. Therefore, only one pivot box is allowed, and the only situation which also allows (1) is when the Young diagram is a rectangular Young diagram of height $n_2$.
\end{proof}

Summarizing the two lemmas above, denoting by $\mathbb{W}_{n_1,n_2}^s$ the subset consisting of the representatives of the double coset $P_{[n_1,n_2]}\backslash G/ Q_{[n-s,1^{s}]}$ corresponding to the nonvanishing summands of (\ref{geomlemma}) are those which satisfy the condition:
\[
    \Ad(w)\left(\bigoplus_{n_1+1\leq i\leq n-1}\frakg_{\alpha_{i}}\right) \cap \bigoplus_{\alpha\geq\alpha_{n_1}}\frakg_{\alpha}\neq \left\{0\right\}.
\]
The set $\mathbb{W}_{n_1,n_2}^s$ can be described by the following lemma:
\begin{lemma}\label{cosetlemma}
    For $n=n_1+n_2$ and $\nu = n_1-n_2$, denoting the Weyl group element corresponding to a rectangular diagram of width $j$ and height $n_2$ by $w(j)$, the subset $\mathbb{W}_{n_1,n_2}^s$ of representatives of the double coset $P_{[n_1,n_2]}\backslash G/ Q_{[n-m,1^{s}]}$ are
    \begin{enumerate}
        \item For $s\leq \min(n_1,n_2)$, the representatives are \[w(0),w(1),\ldots,w(s),\]
        \item For $\min(n_1,n_2) < s\leq \max(n_1,n_2)$, the representatives are \[w(0),w(1),\ldots,w(\min(n_1,n_2)),\]
        \item For $s > \max(n_1,n_2)$, the representatives are \[w(s-\max(n_1,n_2)),w(m-\max(n_1,n_2)+1),\ldots,w(\min(n_1,n_2)).\]
    \end{enumerate}
The parabolic subgroups $M_{[n-s]}\cap w^{-1}P_{[n_1,n_2]}w\subset GL_{n-s}$ for $w\in\mathbb{W}_{n_1,n_2}^s $ have Levi subgroups listed by the partitions $s_1+s_2 = s$ such that $[n_1-s_1,n_2-s_2]$ is a partition of $n-s$.
\end{lemma}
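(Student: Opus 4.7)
The plan is to combine Lemma~\ref{lemmatail}, Lemma~\ref{lemma1}(2), and the cycle decomposition of Lemma~\ref{lemmainverse}. By Lemma~\ref{lemmatail} every $w\in\mathbb{W}_{n_1,n_2}^s$ corresponds to a rectangular Young subdiagram of height $n_2$, so $w=w(j)$ for some $0\leq j\leq n_1$. Using Lemma~\ref{lemmabij1}, the word representing $w(j)$ is the explicit shuffle
\[
(1,2,\ldots,n_1-j,\, n_1+1,\ldots,n,\, n_1-j+1,\ldots,n_1),
\]
and the whole proof reduces to determining which widths $j$ survive and reading off the Levi from the positions of the small and big letters in this word.

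To cut down the admissible $j$, I would apply Lemma~\ref{lemma1}(2): $w(j)$ is a double-coset representative in $P_{[n_1,n_2]}\backslash G/Q_{[n-s,1^s]}$ iff no column of the $n_2\times j$ rectangle has all entries in $\{1,\ldots,n-s-1\}$. The filling $(i,c)\mapsto n-1-i-c$ makes this concrete: the $c$-th column is entirely below $n-s$ iff $c\geq s$, so the condition becomes $j\leq s$. The tail analysis in the proof of Lemma~\ref{lemmatail} then imposes that the last $s$ letters of the word realize one of the admissible patterns $t_k$; matching this against the shuffle above pins $k=\max(0,s-j)$ and translates the bounds on $k$ (namely $\max(0,s-n_1)\leq k\leq\min(s,n_2)$) into refined bounds on $j$. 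Splitting on whether $s\leq\min(n_1,n_2)$, $\min(n_1,n_2)<s\leq\max(n_1,n_2)$, or $s>\max(n_1,n_2)$ then gives exactly the three cases listed.

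For the Levi computation, set $V_1=\langle e_1,\ldots,e_{n_1}\rangle$, $V_2=\langle e_{n_1+1},\ldots,e_n\rangle$, and $V^{(s)}=\langle e_1,\ldots,e_{n-s}\rangle$. The intersection $M_{[n-s]}\cap w^{-1}P_{[n_1,n_2]}w$ is the parabolic of $GL(V^{(s)})$ whose Levi decomposes as a product of two blocks of dimensions $\dim(V^{(s)}\cap w^{-1}(V_1))$ and $\dim(V^{(s)}\cap w^{-1}(V_2))$; reading these dimensions off the shuffle gives them as $n_1-s_1$ and $n_2-s_2$ for a unique pair $(s_1,s_2)$ with $s_1+s_2=s$, yielding the stated parameterization. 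The main obstacle I expect is the combinatorial accounting in the second paragraph: the tail-to-width correspondence is not injective on its face, since every width $j\geq s$ produces the trivial tail $t_0$, so one has to confirm that the coset constraint $j\leq s$ together with the shape constraint $j\leq n_1$ eliminate duplicates and produce exactly the ranges claimed in each case. Once the widths are fixed, the Levi dimension count is routine.
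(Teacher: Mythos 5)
Your strategy is essentially the paper's: use Lemma~\ref{lemmatail} to reduce $\mathbb{W}_{n_1,n_2}^s$ to rectangular diagrams $w(j)$ of height $n_2$, apply Lemma~\ref{lemma1}(2) to enforce $j\le s$, convert the tail-index bounds $\max(0,s-n_1)\le k\le\min(s,n_2)$ from Lemma~\ref{lemmatail}'s proof into width bounds via $k=s-j$, and read the Levi blocks off the explicit shuffle word. That is exactly the paper's route; your version just makes the filling formula and the constraint bookkeeping explicit, and your vector-space argument $\dim(V^{(s)}\cap w^{-1}V_i)$ for the Levi is a clean way to recover $[n_1-j,\,n_2-s+j]$, i.e.\ $(s_1,s_2)=(j,s-j)$ (the paper writes $(s-j,j)$, a relabeling).

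The accounting obstacle you flag at the end is, however, a genuine one and you should not wave it through. Combining the three constraints gives $j\in[\max(0,s-n_2),\min(s,n_1)]$. This reproduces part (1) and, when $n_1\le n_2$, parts (2) and (3) as stated. But when $n_1>n_2$ it yields, e.g., $j\in[s-n_2,s]$ for $n_2<s\le n_1$, not $j\in[0,n_2]=\{0,\dots,\min(n_1,n_2)\}$ as claimed in part (2), and similarly $j\in[s-n_2,n_1]$ rather than $[s-n_1,n_2]$ in part (3). The cardinalities match (which is all the paper's proof actually checks when it says ``the total number of representatives agrees''), but the identification of which $w(j)$ occur does not. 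So before asserting that ``splitting on the three cases gives exactly the list,'' you need to either locate a different $k\leftrightarrow j$ dictionary that makes the $n_1>n_2$ subcases come out as stated, or recognize that the lemma's stated ranges in (2)--(3) need to be corrected; as written your derivation and the lemma text are in tension precisely where you predicted the difficulty would be.
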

\begin{proof}
The lemma follows from direct enumeration of the rectangular diagrams. If $s\leq \min(n_1,n_2)$, in the proof of Lemma \ref{lemmatail}, since no columns completely consisting of elements from $\{1,2,\ldots, n-s-1\}$ are allowed, the largest rectangular diagram with no column completely filled with numbers in ${1,2,\ldots,n-s}$ is the one corresponding to the partition $[s^{n_2}]$. If $n_1 \leq n_2$, in the proof of Lemma \ref{lemmatail}, we can learn that if $n_1 < s \leq n_2$, the list of tails range from $t_{s-n_1}$ up to $t_{s}$. But if $s > n_2$, the list of tails ranges from $t_{s-n_1}$ up to $t_{n_2}$. 
When $n_1 > n_2$, if $n_2 < s \leq n_1$, the tails exist from $t_0$ to $t_{n_2}$, while if $s > n_1$, the list of tails ranges from $t_{s-n_1}$ up to $t_{n_2}$. In either case, the total number of representatives agrees with the result in part (2) and (3) of this lemma, and the representatives match with the representatives described in Lemma \ref{lemmatail}. A rectangular diagram of width $j$ sends $n_1+t$ to the position $n_1+t-j$ of the word corresponding to the permutation. As in the proof of Lemma \ref{lemmatail}, $(s_1,s_2) = (s-j,j)$, and if $s,j$ satisfies the conditions as described in the statement of the lemma, $[n_1-s+j,s-j]$ and $[n_2-j,j]$ are partitions of the integers $n_1$ and $n_2$, respectively. The block sizes of the Levi subgroup $w^{-1}P_{[n_1,n_2]}w$ are given by the partition $[n_1-s+j,s-j,n_2-j,j]$, and the sizes of Levi blocks of the parabolic subgroup $M_{[n-s]}\cap w^{-1}P_{[n_1,n_2]}w\subset GL_{n-s}$ are given by the partition $[n_1-s+j, n_2-j]$.
\end{proof}
The reasoning in this section can be summarized in the following corollary:
\begin{corollary}\label{advgeomlemma}
    The coset $W_{M,M'}$ in the formula (\ref{geomlemma}) can be replaced by $\mathbb{W}_{n_1,n_2}^s$:
    \begin{align*}
        D^{(s)} E(\tilde{\varphi},g)
        =\sum_{w\in \mathbb{W}_{n_1,n_2}^s}\mathcal{W}_s\left(\sum_{m'\in M_{[n-s]}\cap w^{-1}P_{[n_1,n_2]}w\backslash M_{[n-s]}(k)}\mathcal{A}_{w}\left(c_{w^{-1}M_{[n-s]}w\cap P_{[n_1,n_2]}}\tilde\varphi\right)(wm'g)\right).
    \end{align*}
\end{corollary}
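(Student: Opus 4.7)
The plan is to begin from formula (\ref{geomlemma}), which already expresses $D^{(s)} E(\tilde\varphi, g)$ as a sum indexed by the full double coset $W_{M,M'} = P_{[n_1,n_2]}\backslash G/Q_{[n-s,1^s]}$, and to show that the summands indexed by $w\notin \mathbb{W}_{n_1,n_2}^s$ all vanish identically. Since $D^{(s)} = \mathcal{W}_s\circ c_{P_{[n-s,s]}}$ and $\mathcal{W}_s$ integrates against the generic character $\psi_{[s]}$ on $U_s$, the desired reduction is a standard application of character orthogonality combined with the combinatorial output of the three preceding lemmas.

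First I would examine, for a fixed $w\in W_{M,M'}$, the composite
\[
\mathcal{W}_s\bigl(\mathcal{A}_w(c_{P'\cap w^{-1}Mw}\tilde\varphi)(wm'g)\bigr).
\]
After the change of variables $u\mapsto w^{-1}uw$ inside $\mathcal{W}_s$, this becomes an integration against the conjugated character $\psi_{[s]}^w$ over a subgroup built from $U'$ and $\Ad(w)^{-1}U_s$. If $w$ sends one of the simple roots $\alpha_i$ with $n-s+1\le i\le n-1$ into a direction on which the post-intertwining integrand is already left-invariant — which happens precisely when $w\alpha_i$ lies in the unipotent radical of $P_{[n_1,n_2]}$ and fails to exceed $\alpha_{n-s}$ — then the $\alpha_i$-component of the Whittaker integral reduces to $\int_{k\backslash \bbA}\psi(x)\,dx = 0$, collapsing the whole summand.

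The second step is to interpret this nonvanishing criterion via Lemma \ref{lemmatail}: the requirement that $\{w\alpha_{n-s+1},\ldots,w\alpha_{n-1}\}$ meet the cone $\{\alpha\ge\alpha_{n-s}\}$ is precisely the condition analyzed there, and it singles out rectangular Young diagrams of height $n_2$. Combined with Lemma \ref{lemma1}(2), which discards the rectangles whose filling contains a column built entirely from $\{1,\ldots,n-s\}$, this yields the enumerated list of representatives in Lemma \ref{cosetlemma}, i.e.\ the set $\mathbb{W}_{n_1,n_2}^s$. The identification of the inner parabolic $M_{[n-s]}\cap w(j)^{-1}P_{[n_1,n_2]}w(j)$ with the parabolic of $GL_{n-s}$ whose Levi is $GL_{n_1-s+j}\times GL_{n_2-j}$, for each $j$ indexing $\mathbb{W}_{n_1,n_2}^s$, follows from the Levi computation in the same lemma, so the inner sum requires no further manipulation.

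The main obstacle I expect is making the character-orthogonality step fully rigorous in the adelic setting, since the intertwining operator $\mathcal{A}_w$ is defined only by meromorphic continuation. Concretely, one has to justify interchanging the Whittaker integration and the integration defining $\mathcal{A}_w$: this is standard on the domain of absolute convergence of $\mathcal{A}_w$, and the vanishing then passes to the full meromorphic family by uniqueness of analytic continuation. Once this bookkeeping is verified, the corollary reduces to repackaging Lemmas \ref{lemmatail} and \ref{cosetlemma} into the form displayed.
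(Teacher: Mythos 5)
Your high-level strategy — unfold, apply character orthogonality against $\psi_{[s]}$, feed the surviving terms through Lemmas \ref{lemma1}, \ref{lemmatail}, \ref{cosetlemma}, and flag the analytic-continuation bookkeeping — is the right one, and this is essentially how the paper arrives at the corollary (the paper offers no separate proof: the corollary is introduced as ``the reasoning in this section can be summarized in the following corollary''). However, there is a framing error in your argument that conceals a real intermediate step. You identify $W_{M,M'}$ with $P_{[n_1,n_2]}\backslash G/Q_{[n-s,1^s]}$, but the paper defines $W_{M,M'}$ as $P\backslash G/P'$ with $P' = P_{[n-s,s]}$. These are genuinely different: $P\backslash G/Q_{[n-s,1^s]}$ refines $P\backslash G/P_{[n-s,s]}$, and one can check that $|W_{M,M'}| = |\mathbb{W}_{n_1,n_2}^s|$, so at the level of formula (\ref{geomlemma}) there is no subset to discard. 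The content of the corollary is therefore not ``delete vanishing terms from (\ref{geomlemma})'' but rather ``unfold once more'': for each $w\in W_{M,M'}$, the quantity inside $\mathcal{W}_s$ is an Eisenstein series on $M' = GL_{n-s}\times GL_s$, and computing its generic Whittaker coefficient on the $GL_s$ factor requires a second Bruhat decomposition of $GL_s$ relative to $GL_s\cap w^{-1}Pw$; this is what turns the inner sum from $(M'\cap w^{-1}Pw)\backslash M'$ into $(M_{[n-s]}\cap w^{-1}Pw)\backslash M_{[n-s]}$ in the corollary's formula, and it is in this second unfolding that character orthogonality kills all but one term per $W_{M,M'}$-coset, producing exactly the rectangular representatives of Lemma \ref{lemmatail}.

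Two further imprecisions are worth noting. First, the proposed ``change of variables $u\mapsto w^{-1}uw$ inside $\mathcal{W}_s$'' is not literally a change of variables, since $w^{-1}U_sw$ does not land back in $U_s$; the vanishing mechanism is cleaner when described in the second unfolding as a left-invariance clash with the generic character on a simple root subgroup of $GL_s$. Second, your stated criterion (``$w\alpha_i$ lies in the unipotent radical \emph{and} fails to exceed $\alpha_{n-s}$'') does not exactly match the hypothesis of Lemma \ref{lemmatail} (which concerns only the comparison with $\alpha_{n-s}$) nor the paper's displayed nonvanishing condition (which is phrased with $\alpha_{n_1}$); getting the quantifiers and thresholds to agree requires care, and since the paper itself is terse here, an explicit verification of the matching is exactly the kind of detail a complete proof should supply.
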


\section{Degenerate Whittaker Coefficients and Derivatives}
\subsection{Overview}
The set of nilpotent orbits allowing nonzero Fourier coefficients for an automorphic representation, as well as its local analogue, is a subject studied by many mathematicians. For the case of $GL(n)$, we summarize timeline of results and applications as follows:
\begin{itemize}
    \item In \cite[Theorem 3.1 and 4.5]{shalika1974multiplicity}, Shalika proved that the local and global generic Whittaker model, whenever exists, has multiplicity 1. In particular, Whittaker model always exists for cuspidal automorphic representation. Based on this, \cite{shalika1974multiplicity} and \cite{piatetski1979multiplicity} proved the local and global multiplicity-one theorem for $GL(n)$. The existence of global Whittaker model is also important in the construction of automorphic $L$-functions with the Langlands-Shahidi method.
    \item In \cite[Proposition 5.3]{ginzburg2006certain}, Ginzburg proved that the maximal orbit allowing a nonzero Fourier coefficient for the generalized Speh representation (as a residue of cuspidal Eisenstein series) $\Delta(\tau,n)$ is $[a^n]$ by showing the vanishing of the induced local Whittaker functional. In \cite[Theorem 2.5.4-2.5.6]{liu2013fourier}, the same result was proven following a global argument based on the \emph{root exchange} technique. A recipe for the \emph{root exchange} technique is available in Section 7.1 of \cite{ginzburg2011descent}, and a systematic reduction method based on the root exchange technique is developed by Gourevitch et al. in \cite{gourevitch2022reduction}. A similar global method also appears in \cite{cai2018fourier} for the orbit of the top Fourier coefficient of degenerate Eisenstein series of $GL(n)$, and in \cite{liu2020top} for the orbit corresponding to the top Fourier coefficient of isobaric sum representations.
\end{itemize}
In this section, we will provide a global, inductive proof to the vanishing of the Fourier coefficients associated with larger orbits, as well as an argument for the Eulerianity of certain Fourier-Whittaker coefficients.

\subsection{Whittaker Support and Wave-front Set of Automorphic Forms}
This section follows the notations and terminologies in \cite{gourevitch2022reduction}. For any $\mathbb{Q}$-semisimple element $S\in\frakg$, denoting by $\frakg^S_\mu$ the $\mu$-eigenspace of the adjoint action $\mathrm{ad}(S)$ on $\frakg$,
\begin{itemize}
    \item A \emph{Whittaker pair} $(S,\phi)\in\frakg\times\frakg^*$ contains a $\mathbb{Q}$-semisimple element $S$ and a nilpotent element $\phi\in (\frakg^*)^S_{-2}$. The element $\phi$ can be constructed by pairing the Killing form of $\frakg$ with a nilpotent element $f_\phi \in\frakg^S_{-2}$.
    \item A Whittaker pair $(h,\phi)$ is \emph{neutral} if it can be completed to a standard $\mathfrak{sl}_2$-triple.
    \item A Whittaker pair $(S,\phi)$ is \emph{standard} if the nilpotent subalgebra
    \[
        \frakn_{(S,\phi)} = \frakg^S_{>1}\oplus \left(\frakg^S_{1}\cap \frakg_\phi\right),
    \]
    where $\frakg_\phi$ is the stabilizer of $\phi$, is the nilpotent radical of the minimal parabolic subalgebra.
    \item A nilpotent element $\phi\in\frakg^*$ is $k$-distinguished if the corresponding nilpotent element $f_\phi\in\frakg$ does not belong to a proper $k$-Levi subalgebra of $\frakg$. For a neutral Whittaker pair $(h,\phi)$, the Whittaker pair $(h+Z,\phi)$, with $Z$ a semisimple element centralizing the pair $(h,\phi)$ and $\phi$ a $k$-distinguished nilpotent element, is called \emph{Levi-distinguished} if
    \begin{align*}
        \frakg_{>1}^{h+Z} &= \frakg_{\geq 2}^{h+Z} = \frakg_{>0}^{Z}\oplus \frakl_{\geq 2}^{h}\\
        \frakg_{1}^{h+Z} &= \frakl_{1}^h.
    \end{align*}
    where $\mathfrak{l}$ is the centralizer of $Z$. In particular, for the case of $\mathfrak{gl}_n$ and $f_\phi$ a usual Jordan form, the Whittaker pair $(H, \phi)$ with $H = \mathrm{diag}\left(n-1,n-3,\ldots,3-n, 1-n\right)$ is a Levi-distinguished Whittaker pair.
\end{itemize}
For any two Whittaker pairs $(H,\phi)$ and $(S,\phi)$ containing the same nilpotent element $\phi$, $(H,\phi)$ \emph{dominates} $(S,\phi)$ (denoted by $(H,\phi)\prec (S,\phi)$) if $H$ and $S$ commute, and
\[
    \frakg_\phi\cap \frakg_{\geq 1}^H \subset \frakg_{\geq 0}^{S-H}.
\]
In \cite[Corollary 3.2.2 and Proposition 3.2.3]{gourevitch2022reduction}, it is shown that any Whittaker pair $(H,\phi)$ is dominated by a neutral Whittaker pair with the same nilpotent element, and there exists a $Z$ in the centralizer of $(H,\phi)$ such that $(H,\phi)\prec (H+Z,\phi)$ with $(H+Z,\phi)$ standard.\\

We can define the Whittaker coefficient $\mathcal{W}_{(H,\phi)}(F)$ of any automorphic function $F$ corresponding to any Whittaker pair $(H,\phi)$ as the following integral
\[
    \mathcal{W}_{H,\phi}(F) = \int_{N_{S,\phi}} F(ng)\overline{\phi(n)}dn
\]
where $N_{S,\phi}$ is the analytic subgroup with Lie algebra $\frakn_{S,\phi}$ the nilpotent radical of the symplectic form \[
    \omega_\phi\left(X,Y\right) = \phi([X,Y])
\]
restricted to the nilpotent subalgebra $\fraku = \frakg_{\geq 1}^S$.\\

A nilpotent element $\phi$ is in the \emph{Whittaker support} $\mathrm{WS}(F)$ of any automorphic function $F$ if there exists a Whittaker support $(S,\phi)$ such that $\mathcal{W}_{H,\phi}(F)\neq 0$, and for any Whittaker pair $(H',\psi)$ with $\psi$ not in the closure of the nilpotent orbit of $\phi$ we have $\mathcal{W}_{H',\psi}(F)= 0$.\\

The following theorem from \cite[Theorem B]{gourevitch2022reduction} provides a method to calculate the Whittaker support of an automorphic function.
\begin{theorem}\label{gourevitchtheorem}
    If $\phi$ is in $\mathrm{WS}(F)$ and two Whittaker pairs satisfy $(H,\phi)\prec (S,\phi)$, then there exists an integral operator $\mathcal{M}_H^S$ such that
    \[
        \mathcal{W}_{H,\phi}[F] = \mathcal{M}_H^S\left(\mathcal{W}_{S,\phi}[F]\right).
    \]
\end{theorem}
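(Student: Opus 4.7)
The plan is to interpolate between $(H,\phi)$ and $(S,\phi)$ through the affine family of Whittaker pairs $(H_t,\phi)=(H+t(S-H),\phi)$ for $t\in[0,1]$. Since $H$ and $S$ commute by the definition of domination, $H_t$ is $\mathbb{Q}$-semisimple, and because $\phi\in(\frakg^*)^H_{-2}\cap(\frakg^*)^S_{-2}$ it also lies in $(\frakg^*)^{H_t}_{-2}$ for every $t$. The $\mathrm{ad}(H_t)$-eigenvalue on each root space is an affine function of $t$, so the graded pieces $\frakg^{H_t}_j$, and hence $\frakn_{H_t,\phi}$, are piecewise constant, with finitely many rational critical values $0=t_0<t_1<\cdots<t_k=1$. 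On each open subinterval $(t_{i-1},t_i)$ the data defining $\mathcal{W}_{H_t,\phi}$ is independent of $t$, so the Whittaker coefficient is constant there; the task reduces to comparing the coefficient across each critical value.

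At a critical value $t_i$, let $\mathfrak{v}\subset\frakg$ denote the sum of root subspaces whose $\mathrm{ad}(H_t)$-eigenvalue crosses the thresholds that determine $\frakn_{H_t,\phi}$. The domination hypothesis $\frakg_\phi\cap\frakg^H_{\geq 1}\subset\frakg^{S-H}_{\geq 0}$ says precisely that any element of $\frakg_\phi$ starting in $\frakg^H_{\geq 1}$ stays in $\frakg^{H_t}_{\geq 1}$ throughout the homotopy, so no root space in the kernel of the symplectic form $\omega_\phi(X,Y)=\phi([X,Y])$ can leave the integration domain. Consequently, the crossing root spaces come in $\omega_\phi$-dual pairs: the roots disappearing from $\frakn_{H_t,\phi}$ are paired with new roots appearing there. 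This is exactly the configuration on which root exchange applies.

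The root-exchange step, following the recipe of Section~7.1 of \cite{ginzburg2011descent} and its systematic form in \cite{gourevitch2022reduction}, produces an intermediate operator $\mathcal{M}_i$ relating $\mathcal{W}_{H_{t_i-\epsilon},\phi}[F]$ to $\mathcal{W}_{H_{t_i+\epsilon},\phi}[F]$. Concretely, one factors the integration domain by the abelian quotient $V_i$ of the two nilpotent subalgebras and performs a Fourier expansion of $F$ along $V_i(k)\backslash V_i(\bbA)$; characters of $V_i$ are parametrized by nilpotent perturbations $\phi+\phi'\in\frakg^*$. The term $\phi'=0$ yields $\mathcal{W}_{H_{t_i+\epsilon},\phi}[F]$ composed with a residual periodization over the complementary roots, which we take as $\mathcal{M}_i$. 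Every nonzero $\phi'$ produces a Fourier coefficient attached to a nilpotent element that strictly dominates $\phi$, or at least lies outside the closure of its orbit. By the hypothesis $\phi\in\mathrm{WS}(F)$ all such terms vanish, leaving only the desired identity at step $i$. Composing $\mathcal{M}_H^S=\mathcal{M}_k\circ\cdots\circ\mathcal{M}_1$ yields the theorem.

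The main obstacle is the orbit-theoretic vanishing step: one must verify that every nontrivial character of $V_i$ gives rise to a Whittaker coefficient whose nilpotent element is not in the closure of the orbit of $\phi$. This amounts to identifying $V_i$ with an $\omega_\phi$-isotropic complement, analyzing the action of the centralizer of $\phi$ on $V_i^*$, and showing that adding any nonzero $\phi'\in V_i^*$ increases the orbit in the closure order on $\frakg^*/G$. All other ingredients—constancy on subintervals, Fourier expansion, and telescoping across the finitely many critical values of $t$—are essentially formal once this orbit lemma is in place.
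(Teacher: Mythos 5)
The paper does not prove this statement; it is quoted from \cite[Theorem B]{gourevitch2022reduction} and the text proceeds directly with no proof environment, so there is no internal proof to compare your proposal against.

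As a blind reconstruction of the argument in the cited reference, your proposal captures the right overall strategy: linear deformation $H_t = H + t(S-H)$ of the grading element, finitely many rational critical values, piecewise-constancy of $\frakn_{H_t,\phi}$, and at each crossing a Fourier expansion whose nontrivial characters are disposed of using $\phi\in\mathrm{WS}(F)$. But there are two genuine gaps. The first, which you flag yourself, is the orbit-growth lemma: you assert without proof that every nonzero $\phi'\in V_i^*$ pushes the coefficient to a nilpotent element outside the closure of the orbit of $\phi$. This is the technical heart of the cited argument, not a formality, and until it is supplied the invocation of $\phi\in\mathrm{WS}(F)$ does not actually kill anything. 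The second gap is the claim that the crossing root spaces "come in $\omega_\phi$-dual pairs." You present this as a consequence of the dominance inequality $\frakg_\phi\cap\frakg^H_{\geq 1}\subset\frakg^{S-H}_{\geq 0}$, but that inequality only ensures that nothing already in $\frakg_\phi\cap\frakn_{H,\phi}$ ever leaves the integration domain as $t$ increases; it does not, by itself, produce a bijective $\omega_\phi$-pairing between the roots that leave $\frakn_{H_t,\phi}$ and the roots that enter it. The cited reference resolves this by breaking each crossing into elementary moves and distinguishing "pure root exchange" steps (an identity of integrals, with no Fourier expansion and no vanishing needed) from "quasi-Fourier-transform" steps (where the orbit-growth vanishing is invoked), and that dichotomy is absent from your sketch. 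Your proposal is therefore a correct roadmap of the proof but not a complete argument.
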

By this theorem, and \cite[Corollary 3.2.2 and Proposition 3.2.3]{gourevitch2022reduction}, the vanishing of Whittaker coefficient corresponding to a standard Whittaker pair implies the vanishing of any Whittaker coefficient. 

\subsection{Composition of Derivative Operators}
We can compose multiple derivative operators.
\begin{proposition}
    For any two derivative operators 
    \begin{align*}
        D^{(m_2)}: \mathcal{A}(G_n(k)\backslash G_n(\mathbb{A}))&\longrightarrow \mathcal{A}(G_{n-m_2}(k)U_{[(n-m_2) 1^{m_2}]}(\mathbb{A})\backslash G_n(\mathbb{A}))\\
        D^{(m_1)}: \mathcal{A}(G_{n-m_2}(k)\backslash G_{n-m_2}(\mathbb{A}))&\longrightarrow \mathcal{A}(G_{n-m_2-m_1}(k)U_{[(n-m_2-m_1) 1^{m_1}]}(\mathbb{A})\backslash G_{n-m_2}(\mathbb{A})),
    \end{align*}
    if we define the composition of two operators as
    \[
        D^{(m_1)}\circ D^{(m_2)}(f) = D^{(m_1)}\left(\left(D^{(m_2)}f\right)\vert_{G_{n-m_2}}\right),
    \]
    then this operator $D^{(m_1)}\circ D^{(m_2)}$ is equal to the Whittaker coefficient $\mathcal{W}_{U_{[(n-m_1-m_2)1^{m_1+m_2}]},\psi}$ along the unipotent subgroup 
    \[
        U_{[(n-m_1-m_2)1^{m_1+m_2}]} = \left\{\begin{pmatrix}
            I_{n-m_1-m_2} & v\\
            0 & u_{m_1+m_2}
        \end{pmatrix}\mid v\in \mathrm{Mat}_{n-m_1-m_2,m_1+m_2}(\bbA), u_{m_1+m_2}\in U_{[m_1+m_2]}\right\}
    \]
    with $u_{m_1+m_2}$ a strictly upper-triangular matrix of size $m_1+m_2$, and the character $\psi$ given by the partition $(m_1,m_2)$ on $GL_{m_1+m_2}$:
    \begin{align*}
        \psi\begin{pmatrix}
            I_{n-m_1-m_2} & v\\
            0 & u_{m_1+m_2}
        \end{pmatrix} = \psi_{[m_1,m_2]}(u_{m_1+m_2}).
    \end{align*}
\end{proposition}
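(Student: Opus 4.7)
The plan is to unfold both derivative operators as explicit integrals and apply Fubini to combine them into a single Whittaker-type integral over a larger unipotent subgroup. From $D^{(m)} = \Psi \circ \Phi^{m-1}$, the character $\psi_m$ on $U_{[(n-m)1^m]}$ is trivial on the $(n-m)\times m$ rectangular block $v$ and equals the generic Whittaker character on the superdiagonal entries of the lower strictly upper triangular $m\times m$ block. Writing $D^{(m_2)}$ as such an integral, restricting the resulting function to the top-left corner copy of $G_{n-m_2}\subset G_n$, and then applying $D^{(m_1)}$ on $G_{n-m_2}$, one obtains for $g''\in G_{n-m_2}$
\[
(D^{(m_1)}\circ D^{(m_2)}f)(g'') = \int_{U_1'(k)\backslash U_1'(\bbA)}\int_{U_2(k)\backslash U_2(\bbA)} f(u_2\widetilde{u}_1 g'')\,\overline{\psi_{m_2}(u_2)\psi_{m_1}(u_1)}\,du_2\,du_1,
\]
where $U_2 = U_{[(n-m_2)1^{m_2}]}$, $U_1' = U_{[(n-m_1-m_2)1^{m_1}]}$, and $\widetilde{u}_1$ denotes $u_1$ placed in the top-left $(n-m_2)\times(n-m_2)$ block of $G_n$.

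I would next verify that $\widetilde{U}_1'$ normalizes $U_2$ and preserves $\psi_{m_2}$: conjugation of $\begin{pmatrix} I_{n-m_2} & v \\ 0 & u_2'' \end{pmatrix}$ by $\begin{pmatrix} \widetilde{u}_1 & 0 \\ 0 & I_{m_2} \end{pmatrix}$ yields $\begin{pmatrix} I_{n-m_2} & \widetilde{u}_1 v \\ 0 & u_2'' \end{pmatrix}$, which lies in $U_2$ and preserves $\psi_{m_2}$ because $\psi_{m_2}$ is trivial on the $v$-block. Since $U_2\cap\widetilde{U}_1' = \{I\}$, the product $U_{\mathrm{tot}}:=U_2\cdot\widetilde{U}_1'$ is an internal semidirect product. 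Refining to the block decomposition $(n-m_1-m_2)+m_1+m_2$, a direct matrix multiplication shows that a general element of $U_{\mathrm{tot}}$ takes the form
\[
\begin{pmatrix} I_{n-m_1-m_2} & v_1 & v_a \\ 0 & u_1' & v_b \\ 0 & 0 & u_2'' \end{pmatrix},
\]
with $u_1', u_2''$ strictly upper triangular and $v_1, v_a, v_b$ arbitrary. This is exactly $U_{[(n-m_1-m_2)1^{m_1+m_2}]}$ once we identify the proposition's $v$ with $(v_1\mid v_a)$ and its lower strictly upper triangular $(m_1+m_2)\times(m_1+m_2)$ block with $\begin{pmatrix} u_1' & v_b \\ 0 & u_2'' \end{pmatrix}$.

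For the characters, $\psi_{m_2}(u_2)$ contributes the sum of superdiagonal entries of $u_2''$ and $\psi_{m_1}(u_1)$ contributes the sum of superdiagonal entries of $u_1'$, while $v_1, v_a, v_b$ receive the trivial character. The crucial point is that the superdiagonal entry at position $(m_1, m_1+1)$ of the combined lower block --- the top-left entry of $v_b$, the \emph{bridge} between the two Whittaker blocks --- sits inside the $v$-part of $U_2$ and therefore receives the trivial character. This is precisely the recipe for the degenerate Whittaker character $\psi_{[m_1,m_2]}$ on $GL_{m_1+m_2}$. Together with the factorization of Haar measure on the semidirect product $U_{\mathrm{tot}}$, Fubini converts the iterated integral into the single Whittaker coefficient $\mathcal{W}_{U_{\mathrm{tot}},\psi}(f)(g'')$, as claimed.

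The main obstacle I anticipate is the index bookkeeping: tracking rows and columns across the two embeddings to confirm that the bridge entry at $(m_1, m_1+1)$ truly lies in the $v$-part of $U_2$ (so that it is killed by $\psi_{m_2}$ and invisible to $\psi_{m_1}$), together with the verification that $U_2\cap\widetilde{U}_1' = \{I\}$ so that the iterated integration organizes as a single Haar integral over $U_{\mathrm{tot}}$. Once these matchups are in place, the rest of the proof is Fubini and block-matrix multiplication.
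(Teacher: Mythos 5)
Your proposal is correct and takes essentially the same route as the paper: both rest on the block-matrix factorization of an element of $U_{[(n-m_1-m_2)1^{m_1+m_2}]}$ into a factor in $U_{[(n-m_2)1^{m_2}]}$ times the embedded copy of $U_{[(n-m_1-m_2)1^{m_1}]}$, followed by Fubini, and your extra verification that the characters glue to $\psi_{[m_1,m_2]}$ simply makes explicit what the paper leaves to the reader. One small slip: the bridge entry at position $(m_1,m_1+1)$ of the combined lower block is the \emph{bottom}-left entry of $v_b$, not its top-left entry (though this does not affect the argument, since all of $v_b$ lies in the $v$-part of $U_2$).
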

\begin{proof}
    For any automorphic form $f\in \mathcal{A}(G_n(k)\backslash G_n(\mathbb{A}))$, the function $D^{(m_2)}f$ can be restricted to the subgroup $GL_{n-m_2}$ on the top-left corner. After composing with $D^{(m_1)}$, by the matrix decomposition
    \[
        \begin{pmatrix}
            I_{n-m_1-m_2} & v\\
            0 & u_{m_1+m_2}
        \end{pmatrix} = \begin{pmatrix}
            I_{n-m_1-m_2} & 0& v_3\\
            0 & I_{m_1} & v_2\\
            0 & 0 & u_{m_2}
        \end{pmatrix}\begin{pmatrix}
            I_{n-m_1-m_2} & v_1& 0\\
            0 & u_{m_1} & 0\\
            0 & 0 & 1
        \end{pmatrix},
    \]
    thus we can decompose the integral as
    \begin{align*}
        &\int_{[U_{[(n-m_1-m_2)1^{m_1+m_2}]}]}f\left(\begin{pmatrix}
            I_{n-m_1-m_2} & v\\
            0 & u_{m_1+m_2}
        \end{pmatrix}g\right)\psi_{[m_1,m_2]}\begin{pmatrix}
            I_{n-m_1-m_2} & v\\
            0 & u_{m_1+m_2}
        \end{pmatrix} du_{m_1+m_2}dv\\
        &= \int_{[U_{[(n-m_1-m_2)1^{m_1}]}]}\int_{[U_{[(n-m_2)1^{m_2}]}]}f\left(\begin{pmatrix}
            I_{n-m_1-m_2} & 0& v_3\\
            0 & I_{m_1} & v_2\\
            0 & 0 & u_{m_2}
        \end{pmatrix}\begin{pmatrix}
            I_{n-m_1-m_2} & v_1& 0\\
            0 & u_{m_1} & 0\\
            0 & 0 & 1
        \end{pmatrix}g\right)\times\\
        &\;\;\;\;\;\;\;\;\;\;\;\;\psi_{m_2}\begin{pmatrix}
            I_{n-m_1-m_2} & 0& v_3\\
            0 & I_{m_1} & v_2\\
            0 & 0 & u_{m_2}
        \end{pmatrix}\psi_{m_1}\begin{pmatrix}
            I_{n-m_1-m_2} & v_1& 0\\
            0 & u_{m_1} & 0\\
            0 & 0 & 1
        \end{pmatrix}du_{m_2}du_{m_1}dv_1dv_2dv_3.
    \end{align*}
\end{proof}
Using this proposition, for any Whittaker pair $(H,\phi)$ with $H = \diag(n-1,n-3,\ldots,3-n,1-n)$ and $\phi$ a nilpotent element with Jordan type $(\lambda_1,\ldots,\lambda_r)$, the corresponding Fourier-Whittaker coefficient operator can be identified with a composition of derivative operators:
\[
    \mathcal{W}_{H,\phi} = D^{(\lambda_1)}\circ \ldots \circ D^{(\lambda_r)},
\]
which is the degenerate Whittaker coefficient along the maximal unipotent radical with respect to the character
\begin{align*}
    \psi_{\alpha}\left(
        \begin{smallmatrix}
            1 & u_1 & \ldots & & &\\
              & 1   & u_2 & \ldots & &\\
              & & 1 & u_3 & \ldots &&\\
              & & & & \ldots&&\\
              & & & & &1 & u_{n-1}\\
              & & & & & & 1
        \end{smallmatrix}
    \right) = &\psi(u_1+\ldots+u_{\alpha_1-1})\psi(u_{\alpha_1+1}+\ldots+u_{\alpha_1+\alpha_2-1})\ldots\\
    &\psi(u_{\alpha_1+\ldots+\alpha_{s-1}+1}+\ldots+u_{\alpha_1+\ldots+\alpha_{s}-1}).
\end{align*}

\subsection{A First Example: Degenerate Eisenstein Series}
For an Eisenstein series attached to the parabolic induction from a trivial representations on the Levi subgroup $L_{[n_1,n_2]}$:
\[
    E_{\underline{s}}(\cdot): \mathrm{Ind}^{G_{n}}_{P_{[n_1,n_2]}}\left(|\cdot|^{s_1}\otimes |\cdot|^{s_2}\right)\rightarrow \mathcal{A}_n,
\]
we would like to understand the derivative $D^{(m)}E_{\underline{s}}(\phi)$ with $\phi$ the spherical vector of the principal series $\mathrm{Ind}^{G_{n}}_{P_{[n_1,n_2]}}\left(|\cdot|^{s_1}\otimes |\cdot|^{s_2}\right)$. By Corollary \ref{advgeomlemma}, the derivative $D^{(m)}E_{\underline{s}}(\phi)$ can be written as a sum
\begin{align}\label{iwintegral}
    D^{(m)} E_{\underline{s}}(\phi) = \sum_{w\in \mathbb{W}^m_{n_1,n_2}} I_w(\underline{s},\phi),
\end{align}
in which each individual integral $I_w(\underline{s}, \phi)$ is given by the composition of three operators:
\[
    I_w(\underline{s},\phi) = \mathcal{W}_{m}\circ\mathcal{A}_w\circ c_{[n_1-m_1,m_1],[n_2-m_2,m_2]}\left(\phi\right),
\]
with the correspondence of $w$ and the pair of integers $(m_1,m_2)$ is given in Lemma \ref{cosetlemma}. It is possible that many terms $I_w(\underline{s},\phi)$ in the sum will vanish. The vanishing of such terms is described in the following proposition:
\begin{proposition}\label{propcosetdegen}
    All terms $I_w(\underline{s},\phi)$ vanish except for the terms corresponding to the following Weyl group elements: 
    \begin{enumerate}
        \item When $m=1$: 
        \begin{align*}
            w_{[0,1]} &= \mathrm{id},\\
            w_{[1,0]} &= \left\{1,\ldots, n_1-1, n_1+1,\ldots,n_1+n_2,n_1\right\}.
        \end{align*}
        \item When $m=2$: \begin{align*}w_{[1,1]}=\left\{1,\ldots, n_1-1,n_1+1,\ldots,n_1+n_2-1,n_1,n_1+n_2,\right\}.\end{align*}
    \end{enumerate}
\end{proposition}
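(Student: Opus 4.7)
The plan is to apply Corollary~\ref{advgeomlemma} to expand $D^{(m)}E_{\underline{s}}(\phi)$ as a sum $\sum_{w\in\mathbb{W}^m_{n_1,n_2}} I_w(\underline{s},\phi)$, where by Lemma~\ref{cosetlemma} each representative $w$ is parametrized by a pair $(m_1,m_2)$ with $m_1+m_2=m$, and the Levi subgroup of $M_{[n-m,m]}\cap w^{-1}P_{[n_1,n_2]}w$ restricted to the $GL_m$ factor has block structure $[m_1,m_2]$. For the spherical vector $\phi$, the composition of the constant term and the intertwining $\mathcal{A}_w$ yields a function whose restriction to the $GL_m$ factor is a spherical section of the parabolic induction $\mathrm{Ind}_{P_{[m_1,m_2]}}^{GL_m}(|\cdot|^{a_1}\otimes|\cdot|^{a_2})$ for appropriate Weyl-shifted exponents $a_1,a_2$.

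The key step is to show that the generic Whittaker operator $\mathcal{W}_m$ annihilates such a section unless $P_{[m_1,m_2]}$ coincides with the Borel subgroup of $GL_m$, that is, unless each $m_i=1$. This follows from the standard genericity criterion: parabolic induction of a character from $M_{[m_1,m_2]}$ produces a generic representation if and only if each component character is generic on $GL_{m_i}$, which forces $m_i=1$. In the degenerate cases $(m_1,m_2)=(m,0)$ or $(0,m)$, the section on $GL_m$ reduces to the one-dimensional character $|\det|^{a}$, invariant under the full unipotent $U_m(\mathbb{A})$, so the integral $\int_{U_m(k)\backslash U_m(\mathbb{A})}|\det(g)|^{a}\,\overline{\psi_{[m]}(u)}\,du$ vanishes because $\psi_{[m]}$ is a nontrivial character on the compact quotient. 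The intermediate non-Borel cases (which only arise for $m\geq 3$) vanish by the same genericity argument applied to the character on the non-generic component $GL_{m_i}$ with $m_i>1$.

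Since the Borel block structure $[1^m]$ on $GL_m$ is achievable by a two-part partition only when $m\leq 2$, only the cases $m=1$ (where both $(1,0)$ and $(0,1)$ trivially satisfy the Borel condition, since $GL_1$ is its own Borel) and $m=2$ (where only $(1,1)$ yields the Borel $[1,1]$ of $GL_2$) contribute nonvanishing terms, matching the lists $w_{[1,0]},w_{[0,1]}$ and $w_{[1,1]}$ respectively in the statement; for $m\geq 3$ every term vanishes. The main technical obstacle I anticipate is carefully matching the indexing $w_{[m_1,m_2]}$ used in the statement to the rectangular-diagram parametrization of Lemma~\ref{cosetlemma}, and in rigorously justifying the vanishing of the Whittaker integral at the level of sections (as they appear inside the geometric-lemma summands) rather than full automorphic forms in each non-generic case.
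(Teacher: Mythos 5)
Your proposal uses the same starting point as the paper (Corollary~\ref{advgeomlemma} together with the parametrization of Lemma~\ref{cosetlemma}), but the vanishing argument is genuinely different. The paper proves Proposition~\ref{propcosetdegen} by induction on $n$: after restricting $I_w$ to the Levi $M_{[n_1-m_1,n_2-m_2,m_1,m_2]}$, the $GL_m$ factor is the generic Whittaker coefficient of the Eisenstein series $E^{[m_1,m_2]}_{\underline s}$ built on $GL_m$, and the induction hypothesis (at rank $m<n$) is then invoked to conclude that this piece vanishes unless $m_1,m_2\le 1$. You instead short-circuit the induction by appealing to the genericity criterion for parabolic inductions of characters (each block must be $GL_1$), which is essentially the local BZ-derivative fact the paper is trying to mimic globally. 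Both routes reach the same list $w_{[1,0]},w_{[0,1]},w_{[1,1]}$, but the paper's argument is self-contained within its framework while yours imports a known local result; yours is shorter, the paper's is arguably more in the spirit of the project.

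One correction you should make explicit: the function appearing inside $\mathcal{W}_m$ is not merely a spherical section of $\mathrm{Ind}_{P_{[m_1,m_2]}}^{GL_m}(|\cdot|^{a_1}\otimes|\cdot|^{a_2})$. After the constant term and the sum over $m'$ in Corollary~\ref{advgeomlemma}, the restriction to the $GL_m$ factor is a full Eisenstein series on $GL_m$ attached to that degenerate principal series, i.e.\ a left-$GL_m(k)$-invariant automorphic form. So the vanishing you need is that of the Whittaker \emph{coefficient} of this Eisenstein series, which (after the standard Bruhat unfolding to the big cell) reduces to the vanishing of the Jacquet integral on the non-generic inducing data. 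Your explicit computation for $(m_1,m_2)=(m,0)$ is fine precisely because in that degenerate case the ``Eisenstein series'' on $GL_m$ is just the character $|\det|^{a}$ with no extra sum, so the integral against the nontrivial character $\psi_{[m]}$ genuinely kills it; for the intermediate cases with some $m_i\ge 2$ you should say ``unfold and apply the genericity criterion to the resulting Jacquet integral'' rather than speaking of a section. You flagged this as the technical obstacle, and it is: as stated, ``$\mathcal{W}_m$ annihilates such a section'' is not the operation actually being performed. With that adjustment the argument is sound, and the index-matching between $w_{[m_1,m_2]}$ and the $(s_1,s_2)=(s-j,j)$ rectangular-diagram parametrization of Lemma~\ref{cosetlemma} is routine.
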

\begin{proof}
    We prove this proposition by induction on the rank of the group $GL_n$. Setting $n = 2$, the case $(n_1,n_2) = (2,0)$ or $(0,2)$ corresponds to the situation when the representation is a character with no induction involved. In this case, $D^{(m)}E_{\underline{s}}(\phi)$ is simply an integral along the unipotent radical, and is nonzero only if $m = 1$. For $(n_1,n_2) = (1,1)$, the only nonvanishing term in $D^{(m)}E_{\underline{s}}(\phi)$ corresponds to the cases $(m_1,m_2) = (1,0)$, $(0,1)$ and $(1,1)$. By (\ref{geomlemma}), the case $(m_1,m_2) = (1,0)$ or $(0,1)$ corrresponds to the constant term of the $GL_2$ Eisenstein series, and the case $(1,1)$ corresponds to the calculation of the Whittaker coefficient of the Eisenstein series.\\
    \indent Assuming that the proposition holds for all lower-rank cases, by Lemma \ref{cosetlemma}, it suffices to study the vanishing of each $I_w(\underline{s},\phi)$ for every Weyl group element of the form
    \[ w_{(m_1,m_2)} = \left(1,\ldots,n_1-m_1;n_1+1,\ldots,n_1+n_2-m_2;n_1-m_1+1,\ldots,n_1;n_1+n_2-m_2+1,\ldots,n_1+n_2\right).\]
    Moreover, by (\ref{geomlemma}) again, after restricted to the Levi subgroup $M_{[n_1-m_1,n_2-m_2,m_1,m_2]}$, the term $I_w(\underline{s},\phi)$ is the product of an Eisenstein series $E_{\underline{s}}^{[n-m_1,n-m_2]}$ on $GL_{n-m}$ and the Whittaker coefficient $\mathcal{W}_{m_1+m_2}\left(E_{\underline{s}}^{[m_1,m_2]}\right)$, which is the generic Whittaker coefficient along the maximal unipotent subgroup of the Eisenstein series constructed from the degenerate principal series $\mathrm{Ind}_{P_{[m_1,m_2]}}^{G_{m}}\left(|\cdot|^{s_1}\otimes |\cdot|^{s_2}\right)$. By the induction hypothesis, this coefficient $I_w(\underline{s},\phi)$ can be nonzero only if $m_1,m_2\leq 1$.
\end{proof}
Therefore, after restricted to the subgroup $GL_{n-m}$ on the top-left corner, the vector $I_w(\phi,g)$ finds itself in the space of automorphic functions generated by an Eisenstein series
\[
    E^{[n_1-m_1,n_2-m_2]}(\underline{s},\phi;g) = \sum_{\gamma\in P_{[n_1-m_1,n_2-m_2]}(k)\backslash G_{n-m}(k)}\phi(\gamma g)
\]
on the subgroup $GL_{n-m}$. In order to connect this observation to the study of Whittaker support of Eisenstein series, we need the following lemma:
\begin{lemma}\label{lemmapermute}
    For any neutral Whittaker pair $(h,\phi_\lambda)$ with $\phi_\lambda$ given by the Killing pairing $\phi_\lambda = K(\cdot,f_\lambda)$ with a Jordan form $f_\lambda$, and another Whittaker pair $(h',\phi_\lambda')$ obtained from $(h,\phi_\lambda)$ by permuting its Jordan blocks, with $h,h'$ lying in the same Cartan subalgebra, then for any Whittaker pair $(H,\phi'_\lambda)$, we have $\mathcal{W}_{h,\phi_\lambda} = 0$ if $\mathcal{W}_{H,\phi_\lambda'} = 0$.
\end{lemma}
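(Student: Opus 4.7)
The plan is to combine two ingredients: (i) the vanishing of a Whittaker coefficient on an automorphic form is invariant under $G(k)$-conjugation of the Whittaker pair, and (ii) by Corollary 3.2.2 of \cite{gourevitch2022reduction} together with Theorem \ref{gourevitchtheorem}, the vanishing of the Whittaker coefficient for any Whittaker pair is inherited by a neutral Whittaker pair with the same nilpotent that dominates it.

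First I would note that permuting the Jordan blocks of $\phi_\lambda$ is implemented by conjugation with a block-permutation matrix $w_\sigma \in G(k)$, which simultaneously sends $(h, \phi_\lambda)$ to $(h', \phi_\lambda')$. A change of variable $n \mapsto w_\sigma^{-1} n w_\sigma$ inside the integral defining $\mathcal{W}_{h, \phi_\lambda}[F]$, combined with the left $G(k)$-invariance of $F$, then produces the identity
\[
    \mathcal{W}_{h, \phi_\lambda}[F](g_0) = \mathcal{W}_{h', \phi_\lambda'}[F](w_\sigma g_0),
\]
so that $\mathcal{W}_{h, \phi_\lambda}[F] \equiv 0$ if and only if $\mathcal{W}_{h', \phi_\lambda'}[F] \equiv 0$. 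It therefore suffices to deduce $\mathcal{W}_{h', \phi_\lambda'}[F] \equiv 0$ from the hypothesis.

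Given $\mathcal{W}_{H, \phi_\lambda'}[F] \equiv 0$, Corollary 3.2.2 of \cite{gourevitch2022reduction} produces a neutral Whittaker pair $(h'', \phi_\lambda')$ with $(h'', \phi_\lambda') \prec (H, \phi_\lambda')$, and Theorem \ref{gourevitchtheorem} then gives
\[
    \mathcal{W}_{h'', \phi_\lambda'}[F] = \mathcal{M}_{h''}^{H}\left(\mathcal{W}_{H, \phi_\lambda'}[F]\right) \equiv 0.
\]
Both $(h', \phi_\lambda')$ and $(h'', \phi_\lambda')$ are neutral Whittaker pairs for the same nilpotent $\phi_\lambda'$, so by Kostant's theorem on $\mathfrak{sl}_2$-triples there exists a $k$-rational element in the centralizer of $\phi_\lambda'$ conjugating $h''$ to $h'$. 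Applying the conjugation argument from the previous paragraph once more transfers the vanishing of $\mathcal{W}_{h'', \phi_\lambda'}[F]$ to that of $\mathcal{W}_{h', \phi_\lambda'}[F]$, completing the chain $\mathcal{W}_{H, \phi_\lambda'} \equiv 0 \Rightarrow \mathcal{W}_{h'', \phi_\lambda'} \equiv 0 \Rightarrow \mathcal{W}_{h', \phi_\lambda'} \equiv 0 \Rightarrow \mathcal{W}_{h, \phi_\lambda} \equiv 0$.

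The main technical point is ensuring rationality of every conjugating element. For $GL_n$ this is not a serious obstacle: nilpotent orbits over $k$ are classified by Jordan type, so $\phi_\lambda$ and $\phi_\lambda'$ are $G(k)$-conjugate via $w_\sigma$, and the centralizer of $\phi_\lambda'$ is a connected $k$-rational subgroup acting transitively on the set of neutral $\mathfrak{sl}_2$-completions of $\phi_\lambda'$. A secondary detail is that the one-sided implication ``$\mathcal{W}_{H, \phi_\lambda'} \equiv 0 \Rightarrow \mathcal{M}_{h''}^{H}(\mathcal{W}_{H, \phi_\lambda'}) \equiv 0$'' used above does not require the additional hypothesis $\phi_\lambda' \in \mathrm{WS}(F)$ of Theorem \ref{gourevitchtheorem}, which is immediate from the fact that $\mathcal{M}_{h''}^{H}$ is an honest integral operator.
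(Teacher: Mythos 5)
Your argument follows the same two-step strategy as the paper's proof: first transfer the vanishing of $\mathcal{W}_{h',\phi_\lambda'}$ to $\mathcal{W}_{h,\phi_\lambda}$ via $G(k)$-conjugation by the block-permutation matrix (what the paper compresses into ``are Weyl group conjugates''), and then deduce the vanishing of $\mathcal{W}_{h',\phi_\lambda'}$ from that of $\mathcal{W}_{H,\phi_\lambda'}$ via the integral operator of Theorem \ref{gourevitchtheorem}. You are, however, noticeably more careful on the second step, and this is worth pointing out. The paper writes directly $\mathcal{W}_{h',\phi_\lambda'} = \mathcal{M}_{h',H}'(\mathcal{W}_{H,\phi_\lambda'})$, which presupposes the domination relation $(h',\phi_\lambda')\prec(H,\phi_\lambda')$; but the neutral element $h''$ guaranteed by \cite[Corollary 3.2.2]{gourevitch2022reduction} to dominate $(H,\phi_\lambda')$ has no reason to coincide with the specific $h'$ produced by permuting the Jordan blocks. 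Your insertion of the intermediate pair $(h'',\phi_\lambda')$ and the bridge from $h''$ to $h'$ via Kostant conjugacy inside the centralizer of $\phi_\lambda'$, followed by a second change-of-variable transfer, is exactly what is needed to make the chain airtight. The remark that the conjugating element can be taken $k$-rational (via the $k$-split unipotent part of the centralizer, in type $A$) and the observation that the one-sided vanishing implication does not really need the $\phi\in\mathrm{WS}(F)$ hypothesis in Theorem \ref{gourevitchtheorem} are both correct and useful clarifications. In short: same route as the paper, but you close a small gap the paper leaves implicit.
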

\begin{proof}
    The Whittaker coefficients $\mathcal{W}_{h,\phi_\lambda}$ and $\mathcal{W}_{h',\phi_\lambda'}$ are Weyl group conjugates with each other. The result of this lemma follows from the existence of the integral operator $\mathcal{M}_{h',H}'$, according to Theorem \ref{gourevitchtheorem}, such that
    \[
        \mathcal{W}_{h',\phi_\lambda'} = \mathcal{M}_{h',H}'\left(\mathcal{W}_{H,\phi_\lambda'}\right).
    \]
    Thus, $\mathcal{W}_{H,\phi_\lambda'} = 0$ implies $\mathcal{W}_{h',\phi_\lambda'} = 0$.
\end{proof}

Using the method developed in this section, we can reprove the following result on the Whittaker support of degenerate Eisenstein series, first proven by Cai in \cite{cai2018fourier}: 
\begin{corollary}
    For the degenerate Eisenstein series $E_{\underline{s}}$ induced from the trivial representation of the Levi subgroup $L_{[n_1,n_2]}$, the degenerate Whittaker coefficient $\mathcal{W}_{U,\phi_\lambda}\left(E_{\underline{s}}\right)$ can be nonzero only if the corresponding partition $\lambda$ contains only $1$ and $2$. 
\end{corollary}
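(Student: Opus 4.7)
The plan is to reduce the corollary to a direct application of Proposition \ref{propcosetdegen} by carefully choosing the order in which the derivative operators are composed. The key idea is that if the partition $\lambda$ has some part of size $\geq 3$, we can rearrange the Jordan blocks so that this large block is processed first by the derivative chain, and Proposition \ref{propcosetdegen} will then force the result to vanish.

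Concretely, given a partition $\lambda$ of $n$ containing a part of size $\geq 3$, I would proceed as follows. First, use Lemma \ref{lemmapermute} to reduce the problem: it suffices to exhibit one permutation $\phi'_\lambda$ of the Jordan blocks of $\phi_\lambda$ together with one Whittaker pair $(H, \phi'_\lambda)$, with $H$ in the diagonal Cartan of $\mathfrak{gl}_n$, for which $\mathcal{W}_{H,\phi'_\lambda}(E_{\underline{s}}) = 0$. I would take the ordered composition $(\lambda'_1, \ldots, \lambda'_r)$ of $n$ obtained by reordering the blocks of $\lambda$ so that some largest part, of size $\geq 3$, sits in the last slot, i.e.\ $\lambda'_r \geq 3$, and choose $H = \diag(n-1, n-3, \ldots, 1-n)$, the standard element used in the composition-of-derivatives identity of the preceding subsection.

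Next, that identity yields
\[
    \mathcal{W}_{H,\phi'_\lambda}\bigl(E_{\underline{s}}(\phi)\bigr) = D^{(\lambda'_1)} \circ D^{(\lambda'_2)} \circ \cdots \circ D^{(\lambda'_{r-1})} \circ D^{(\lambda'_r)}\bigl(E_{\underline{s}}(\phi)\bigr),
\]
in which the innermost operator $D^{(\lambda'_r)}$ acts first on $E_{\underline{s}}(\phi)$. Applying Proposition \ref{propcosetdegen} to this innermost derivative with $m = \lambda'_r \geq 3$, I would argue that the set of nonvanishing Weyl representatives $\mathbb{W}^m_{n_1,n_2}$ furnished by Corollary \ref{advgeomlemma} is empty: the proposition explicitly lists surviving representatives only for $m \in \{1,2\}$, and its inductive proof shows that in every reduction any surviving pair $(m_1, m_2)$ must satisfy $m_1, m_2 \leq 1$, which is incompatible with $m_1 + m_2 \geq 3$. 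Hence $D^{(\lambda'_r)} E_{\underline{s}}(\phi) = 0$, the whole composition collapses, and Lemma \ref{lemmapermute} transfers this vanishing back to $\mathcal{W}_{U,\phi_\lambda}(E_{\underline{s}})$.

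The main obstacle is essentially bookkeeping: making explicit that Proposition \ref{propcosetdegen}, whose statement only enumerates Weyl elements for $m \in \{1, 2\}$, in fact delivers $D^{(m)}E_{\underline{s}} \equiv 0$ for every $m \geq 3$, and verifying that the diagonal element $H$ together with the neutral element of the $\mathfrak{sl}_2$-triple attached to $\phi_\lambda$ meet the hypotheses of Lemma \ref{lemmapermute}. Both facts are immediate from the text, but they are the linchpins of the argument and should be stated cleanly before being invoked.
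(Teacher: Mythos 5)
Your approach is a genuinely different route from the paper's. The paper proves this corollary by induction on $n$: it first invokes Proposition~\ref{propcosetdegen} to see that only $D^{(1)}$ and $D^{(2)}$ survive, then applies Lemma~\ref{advgeomlemma} to write $D^{(1)}E_{\underline{s}}$ and $D^{(2)}E_{\underline{s}}$ as sums of smaller degenerate Eisenstein series $E^{[n_1-1,n_2]}$, $E^{[n_1,n_2-1]}$, $E^{[n_1-1,n_2-1]}$, and feeds these back into the inductive hypothesis to control the full derivative chain and, in fact, to pin down the top orbit as well. You instead short-circuit the induction by reordering the Jordan blocks so that the offending part $\lambda'_r\geq 3$ is processed first; then Proposition~\ref{propcosetdegen} kills $D^{(\lambda'_r)}E_{\underline{s}}$ in a single blow. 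This isolates exactly what is needed for the corollary as stated (without the computation of the maximal orbit), and your observation that the proposition actually gives $D^{(m)}E_{\underline{s}}\equiv 0$ for \emph{every} $m\geq 3$ is correct and worth stating explicitly, since the proposition's statement only enumerates the surviving Weyl elements for $m\in\{1,2\}$.

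One subtlety deserves attention. Lemma~\ref{lemmapermute}, as stated and proved in the paper, delivers vanishing of the \emph{neutral} Whittaker coefficient $\mathcal{W}_{h,\phi_\lambda}$ (with $(h,\phi_\lambda)$ a neutral pair) from vanishing of $\mathcal{W}_{H,\phi'_\lambda}$. But the corollary is phrased in terms of $\mathcal{W}_{U,\phi_\lambda}$, which — given the composition-of-derivatives identity with $H=\diag(n-1,n-3,\ldots,1-n)$ — is the \emph{standard} (Levi-distinguished) Whittaker coefficient, not the neutral one. The paper's inductive proof does not have this issue because it keeps the Jordan blocks in the given order and directly shows the standard coefficient $D^{(\lambda_1)}\circ\cdots\circ D^{(\lambda_r)}E_{\underline{s}}$ vanishes. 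Theorem~\ref{gourevitchtheorem} as stated only gives the implication ``standard vanishing $\Rightarrow$ dominated vanishing,'' so moving from $\mathcal{W}_{h,\phi_\lambda}=0$ back to $\mathcal{W}_{U,\phi_\lambda}=0$ requires the converse implication for pairs sharing the same $\phi$, which is available in the cited work of Gourevitch et~al.\ but is not stated in this paper. You should either invoke that stronger equivalence explicitly, or avoid the reordering altogether by running the same argument the paper does (iterate the derivative chain in the given order, noting that the inner derivatives land in smaller degenerate Eisenstein series to which Proposition~\ref{propcosetdegen} again applies), which keeps you inside the standard pair from start to finish.
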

\begin{proof}
    We prove this corollary by induction on $n$. According the proof Proposition \ref{propcosetdegen}, the base cases when $n = 1$ or $2$ are straightforward. By Proposition \ref{propcosetdegen}, if neither of $n_1,n_2$ is zero, and without loss of generality, assuming $n_1\geq n_2$,  the Eisenstein series $E_{\underline{s}}$ allows both $D^{(1)}$ and $D^{(2)}$. By Lemma \ref{advgeomlemma} and (\ref{iwintegral}), the derivative $D^{(1)}E_{\underline{s}}$ is the sum of two vectors
    \[
        E^{[n_1-1,n_2]}\boxtimes\chi_1 + E^{[n_1,n_2-1]}\boxtimes\chi_2
    \]
    with $\chi_1,\chi_2$ unramified characters of $GL_1$. By the induction hypothesis, the allowed Whittaker coefficient of each one of these two terms corresponds to the partition $(2^{n_2}1^{n_1-n_2-1})$ and $(2^{n_2-1}1^{n_1-n_2+1})$, respectively. Therefore, the top orbit allowing a degenerate Whittaker coefficient corresponds to the partition $(2^{n_2}1^{n_1-n_2-1})$, with only the first term surviving. 
    The partial order of a partition containing only 1 or 2 is determined by the total number of 2's in the sequence. However, for any partition $(2^t 1^{n-2t})$ with $t > n_2$, the Whittaker coefficient corresponding to the partition $(2^t 1^{n-2t-1})$ kills both $E^{[n_1-1,n_2]}$ and $E^{[n_1,n_2-1]}$. Therefore, by Theorem \ref{gourevitchtheorem}, for any Whittaker pair $(S,\phi_\lambda)$ with $\lambda = (2^t 1^{n-2t-1}), t>n_2$, the corresponding $\mathcal{W}_{S,\phi_\lambda}$ is zero.
\end{proof}
The original result of Cai in \cite{cai2018fourier} says that the \emph{Whittaker support} of this degenerate Eisenstein series is $[1^{n_1}]+[1^{n_2}]$. For the degenerate Eisenstein series induced from a parabolic subgroup with more than two Levi blocks, we can perform induction by stages and construct Eisenstein series on a bigger group from smaller groups.

\subsection{Generalized Speh Representation}
The following lemma for the constant term of the generalized Speh representation can be found in \cite{liu2013fourier}:
\begin{lemma}\label{baiyinglemma}
    The constant term along $U_{[n_1,n_2]}$ with $n_1+n_2=n$ of the residue $\mathrm{Res}_n E_n(f,\underline{\nu})$ can be realized as a section in $\Ind_{P_{[n_1,n_2]}}^{G_n}(\Delta(\tau,n_1)|\cdot|^{-\frac{n_2}{2}}\boxtimes \Delta(\tau,n_2)|\cdot|^{\frac{n_1}{2}})$.
\end{lemma}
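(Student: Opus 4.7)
The plan is to derive this lemma from the constant term formula for Eisenstein series assembled in Section~2.2, combined with a pole-order analysis at the Speh point. In the notation of the paper, $E_n(f,\underline{\nu})$ is the Eisenstein series associated to $\Ind_{P_{[a^n]}}^{G_{an}}\bigl(\tau|\cdot|^{\nu_1}\boxtimes\cdots\boxtimes\tau|\cdot|^{\nu_n}\bigr)$ with $\tau$ a unitary cuspidal representation on $GL_a$, and $\Delta(\tau,n)$ arises as the iterated residue at $\underline{\nu}\to(\tfrac{n-1}{2},\tfrac{n-3}{2},\ldots,-\tfrac{n-1}{2})$.

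First I would expand the constant term along $U_{[an_1,an_2]}$ exactly as in Section~2.2:
\[
c_{P_{[an_1,an_2]}}E_n(f,\underline{\nu})\;=\;\sum_{w\in W(M_{[a^n]},M_{[an_1,an_2]})}\mathcal{A}_w(\underline{\nu})\,f,
\]
where the double coset consists of block shuffles that distribute the $n$ copies of $GL_a$ between the two target blocks of sizes $an_1$ and $an_2$, and $\mathcal{A}_w(\underline{\nu})$ is the standard intertwining operator attached to $w$. Next I would carry out a pole analysis at the Speh point. Writing each $\mathcal{A}_w(\underline{\nu})$ in its normalized form, the normalizing factor is a product of ratios $L(s,\tau\times\widetilde{\tau})/L(s+1,\tau\times\widetilde{\tau})$ indexed by the positive roots of $M_{[a^n]}$ flipped by $w$. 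Since $\tau$ is unitary cuspidal, $L(s,\tau\times\widetilde{\tau})$ is holomorphic on $\mathrm{Re}(s)\geq 1$ except for a simple pole at $s=1$. The iterated residue therefore picks up contributions only from those $w$ flipping the maximal number of roots $\alpha_{ij}$ with $\nu_i-\nu_j=1$ at the Speh point. A combinatorial count on the shuffle patterns, parallel to Lemma~\ref{cosetlemma}, singles out a unique $w_0$: the shuffle sending the last $n_2$ copies of $\tau$ to the first target block and the first $n_1$ copies to the second; every other $w$ contributes a strictly smaller pole order and vanishes after the residue.

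Finally I would identify the surviving term. The image of $\mathcal{A}_{w_0}(\underline{\nu})$ is a section of $\Ind_{P_{[an_1,an_2]}}^{G_{an}}(\sigma_1\boxtimes\sigma_2)$, where $\sigma_i$ is the parabolic induction of $\tau$ along an inner string of length $n_i$ on $GL_{an_i}$. At the Speh point these inner strings are arithmetic progressions centered at $-n_2/2$ and $n_1/2$ respectively, so their iterated inner residues yield, by the M\oe glin--Waldspurger construction of the generalized Speh representation in \cite{moeglin1989spectre}, exactly $\Delta(\tau,n_1)|\cdot|^{-n_2/2}$ and $\Delta(\tau,n_2)|\cdot|^{n_1/2}$. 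Induction by stages then realizes $\mathrm{Res}_n E_n(f,\underline{\nu})$ as a section of the asserted induced representation.

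The main obstacle is the pole-order step which discards every Weyl element other than $w_0$: one must verify that a partial shuffle captures strictly fewer of the required $L$-function poles than $w_0$ and is hence killed by the iterated residue. The cleanest resolution is to appeal to the description of the $GL_n$ residual spectrum in \cite{moeglin1989spectre}, which singles out the unique intertwining operator at the Speh point; a self-contained combinatorial argument by induction on $n$ reducing to the rank-one case is also available, and is essentially the computation carried out in \cite{liu2013fourier} from which the lemma is quoted.
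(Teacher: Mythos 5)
The paper does not prove this lemma; it quotes it from \cite{liu2013fourier}, so there is no internal proof to compare against, and I evaluate your sketch on its own terms. The overall plan --- expand the constant term, take the iterated residue, identify the surviving Weyl term --- is the right strategy and the final identification step is correct. But the pole-order step, which you rightly flag as the main obstacle, has a genuine gap.

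Your constant-term formula $c_{P_{[an_1,an_2]}}E_n(f,\underline{\nu})=\sum_{w}\mathcal{A}_w(\underline{\nu})\,f$ omits the inner Eisenstein series on the Levi $M_{[an_1,an_2]}$; in the paper's Section 2.2 formula the $w$-term is the automorphic form built by the extra sum over $m'\in M'(k)\cap w^{-1}P(k)w\backslash M'(k)$, not the bare section $\mathcal{A}_w\varphi$. This matters decisively: the iterated residue $\mathrm{Res}_n$ strips $n-1$ simple poles in the $n-1$ directions $\nu_j-\nu_{j+1}=1$, and for the surviving term only one of them comes from the normalizing factor $r(w,\underline{\nu})$; the remaining $n_1-1$ and $n_2-1$ come from the two inner Eisenstein series on $GL_{an_1}$ and $GL_{an_2}$. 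Consequently the criterion ``flip the maximal number of roots with $\nu_i-\nu_j=1$'' does not isolate the surviving element: with $n_1=n_2=2$, the shuffle placing blocks $\{2,4\}$ into the first factor flips two consecutive roots while the shuffle placing $\{3,4\}$ flips only one, yet it is $\{3,4\}$ that survives. The correct bookkeeping is to note that a simple pole along $\nu_j-\nu_{j+1}=1$ is produced exactly when $j$ and $j+1$ lie in the same factor of the shuffle (via the corresponding inner Eisenstein series) or when $j$ lies in the second factor and $j+1$ in the first (via $r(w,\underline{\nu})$); having a pole in all $n-1$ directions then forces the set of indices sent to the second factor to be the initial segment $\{1,\ldots,n_2\}$, i.e.\ $S=\{n_2+1,\ldots,n\}$ goes to the first factor. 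This also fixes a minor index swap in your description of $w_0$: the last $n_1$ (not $n_2$) copies of $\tau$ go into the first block of size $an_1$, which is what produces the exponent $-n_2/2$ on $\Delta(\tau,n_1)$. Once the inner Eisenstein series are restored and $w_0$ is correctly singled out, the identification step you sketch does complete the proof.
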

Applying the derivative operator $D^{(ra)}$, which is the composition of a constant term operator $c_{P_{[(n-r)a,ra]}}$ and a generic Whittaker coefficient operator $\mathcal{W}_{ra}$. By Lemma \ref{baiyinglemma}, for any $f\in \Delta(\tau,n)$, $c_{P_{[(n-r)a,ra]}}f$ is a section of the induced representation $\Ind_{P_{[n-r,r]}}^{G}(\Delta(\tau,n-r)|\cdot|^{-\frac{r}{2}}\boxtimes \Delta(\tau,r)|\cdot|^{\frac{n-r}{2}})$. 
\begin{proposition}\label{spehlemma}
    For a generalized Speh representation $\Delta(\tau,n)$ realized as the residue of an Eisenstein series with $\tau$ a cuspidal automorphic representation of $GL_a$, the only allowed derivative is $D^{(a)}$.
\end{proposition}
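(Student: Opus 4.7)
The plan is to argue by induction on $n$. For the base case $n=1$, $\Delta(\tau,1)=\tau$ is cuspidal on $GL_a$, so the composition $D^{(m)}\tau=\mathcal{W}_m(c_{P_{[a-m,m]}}\tau)$ vanishes for $0<m<a$ by cuspidality, while $D^{(a)}\tau$ is the generic Whittaker coefficient of $\tau$, which is nonzero by the genericity of cuspidal representations of $GL_a$.

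For the inductive step I split into two cases according to whether $m$ is a multiple of $a$. If $m=ra$ with $1\leq r\leq n$, Lemma~\ref{baiyinglemma} identifies $c_{P_{[(n-r)a,ra]}}\Delta(\tau,n)$ with a section of $\Ind_{P_{[(n-r)a,ra]}}^{G_{an}}(\Delta(\tau,n-r)|\cdot|^{-r/2}\boxtimes\Delta(\tau,r)|\cdot|^{(n-r)/2})$, and since $\mathcal{W}_{ra}$ is the generic Whittaker operator on the bottom $GL_{ra}$ factor it factors through the generic Whittaker functional on the inducing datum $\Delta(\tau,r)$, reducing the problem to $D^{(ar)}\Delta(\tau,r)$. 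For $r=1$ this is the Whittaker functional on the cuspidal $\tau$ and hence nonzero, giving the surviving derivative $D^{(a)}\Delta(\tau,n)\neq 0$. For $2\leq r<n$ the quantity vanishes by the inductive hypothesis, and for $r=n$ it is precisely the assertion that the Speh representation $\Delta(\tau,n)$ itself carries no generic Whittaker functional.

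If instead $m$ is not a multiple of $a$, realize $\Delta(\tau,n)$ as a residue of an Eisenstein series on $\Ind_{P_{[a,a(n-1)]}}^{G_{an}}(\tau|\cdot|^{s_1}\otimes\Delta(\tau,n-1)|\cdot|^{s_2})$ and apply Corollary~\ref{advgeomlemma}. By Lemma~\ref{cosetlemma} every summand is indexed by a pair $(m_1,m_2)$ with $m_1+m_2=m$, and involves the constant terms $c_{P_{[a-m_1,m_1]}}\tau$ and $c_{P_{[a(n-1)-m_2,m_2]}}\Delta(\tau,n-1)$ followed by an intertwining $\mathcal{A}_w$ and the Whittaker operator $\mathcal{W}_m$. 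Cuspidality of $\tau$ forces $m_1\in\{0,a\}$, and after factoring $\mathcal{W}_m$ on the lower $GL_{m_2}$ block the $\Delta(\tau,n-1)$-contribution reduces to $D^{(m_2)}\Delta(\tau,n-1)$, which by the inductive hypothesis forces $m_2\in\{0,a\}$. Hence $m=m_1+m_2\in\{a,2a\}$, contradicting the assumption that $m$ is not a multiple of $a$.

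The main obstacle is the $r=n$ sub-case above: the vanishing of the generic Whittaker functional on $\Delta(\tau,n)$ itself, equivalently $D^{(an)}\Delta(\tau,n)=0$, cannot be reduced to smaller $n$ within this induction and must be supplied by external input---for instance the classical M\oe glin-Waldspurger result on residual representations, or a direct pole analysis of Shahidi's Whittaker integral for the full Eisenstein series at the Speh residue point, where the pole of the denominator factor $L(1,\tau\otimes\widetilde\tau)$ cancels the Eisenstein residue.
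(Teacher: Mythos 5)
Your proof follows the same inductive strategy as the paper: both reduce $D^{(m)}\Delta(\tau,n)$ via Lemma~\ref{baiyinglemma} and then appeal to the induction hypothesis on the generic Whittaker coefficient of the inner $\Delta(\tau,r)$ block. Two remarks. First, your separate case ``$m$ not a multiple of $a$'' is superfluous: Lemma~\ref{baiyinglemma}, which classifies the cuspidal support of $\Delta(\tau,n)$, already implies that $c_{P_{[an-m,m]}}\Delta(\tau,n)=0$ unless $m=ra$, so $D^{(m)}=\mathcal{W}_m\circ c_{P_{[an-m,m]}}$ vanishes immediately when $a\nmid m$; there is no need to re-realize $\Delta(\tau,n)$ as a residue of a two-block Eisenstein series and rerun Corollary~\ref{advgeomlemma}. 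Second, your identification of the $r=n$ sub-case as a genuine gap is correct and is actually a gap the paper's own proof shares: the paper writes ``when $r>1$ \dots by the induction hypothesis,'' but the induction hypothesis only covers $\Delta(\tau,r)$ for $r<n$, while $r=n$ amounts to the assertion that $\Delta(\tau,n)$ itself carries no generic Whittaker functional. This vanishing is classical (a consequence of the M\oe glin--Waldspurger description of the residual spectrum, or of the pole/zero analysis of Shahidi's formula for the Whittaker coefficient of $E(\tilde\varphi,\underline{s})$ at the Speh residue point, where the denominator $L(1+s_i-s_j,\tau\times\widetilde\tau)$ develops poles killing the residue) and is implicitly being taken for granted in the paper; flagging it and supplying an external reference, as you propose, is the right repair.
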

\begin{proof}
    We can prove this proposition by induction on $n$. When $n=1$, since $\tau$ is a cuspidal automorphic representation of $GL_a$, any constant term operator $c_{P_{[n_1,n_2]}}$ with $n_1$ or $n_2$ nonzero will vanish. Therefore, the only nonzero derivative is $D^{(a)}$, which corresponds to calculating the generic Whittaker coefficient of any vector in the cuspidal automorphic representation $\tau$. Assuming the proposition holds for all $k\leq n$, by Lemma \ref{baiyinglemma}, the only possibly nonzero constant terms are of the form $c_{P_{[(n-r)a,ra]}}$. However, when $r > 1$, by Lemma \ref{baiyinglemma}, Corollary \ref{advgeomlemma} and the induction hypothesis, the Whittaker coefficient $\mathcal{W}_{ra}$ as in Corollary \ref{advgeomlemma} vanishes. Thus the only allowed derivative for any vector $f\in \Delta(\tau,n)$ is 
    $D^{(a)}$.
\end{proof}
This process can be iterated. The derivative operator $D^{(a)}$ of $\Delta(\tau,n)$ can be realized as a composition of two operators
\[
    \Delta(\tau,n)\xrightarrow{c_{[(n-1)a,a]}} \Ind_{P_{[(n-1)a,a]}}^{G_{na}}(\Delta(\tau,n-1)|\cdot|^{-\frac{1}{2}}\boxtimes \tau|\cdot|^{\frac{n-1}{2}})\xrightarrow{\mathcal{W}_{a}} \Ind_{P_{[(n-1)a,a]}}^{G_{na}}(\Delta(\tau,n-1)|\cdot|^{-\frac{1}{2}}\boxtimes |\cdot|^{\frac{n-1}{2}})
\]
As a result, the only nonzero degenerate Whittaker coefficients $\mathcal{W}_{U,\phi_\lambda}(f)$ for $f\in \Delta(\tau,n)$ are those with a partition $\lambda$ containing only $a$, as shown in \cite[Theorem 2.3.3]{liu2013fourier}. This result can also be proven by our method, as stated in the following lemma:
\begin{corollary}\label{spehwhittaker}
    The only nonzero degenerate Whittaker coefficient of the generalized Speh representation $\Delta(\tau,n)$ is attached to the orbit $[a^n]$.
\end{corollary}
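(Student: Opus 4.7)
The plan is to combine Proposition~\ref{spehlemma} (which says the only allowed first derivative on $\Delta(\tau,n)$ is $D^{(a)}$) with the composition identity $\mathcal{W}_{H,\phi_\lambda} = D^{(\lambda_1)} \circ \cdots \circ D^{(\lambda_r)}$ established immediately before the statement, together with Lemma~\ref{lemmapermute} on permuting Jordan blocks and Theorem~\ref{gourevitchtheorem} on reducing to standard Whittaker pairs. I would first fix the Levi-distinguished pair $(H,\phi_\lambda)$ with $H = \diag(na-1, na-3, \ldots, 1-na)$; by Theorem~\ref{gourevitchtheorem}, vanishing of $\mathcal{W}_{H,\phi_\lambda}$ forces vanishing of every other Whittaker coefficient attached to the nilpotent orbit of $\phi_\lambda$, so it suffices to analyze the iterated-derivative composition at this one pair.

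For the vanishing direction, suppose $\lambda$ is a partition of $na$ with $\lambda \neq [a^n]$. Then $\lambda$ must contain some part $m \neq a$. By Lemma~\ref{lemmapermute} I may freely permute the parts without altering the vanishing behavior of $\mathcal{W}_{H,\phi_\lambda}$, so I arrange matters so that $\lambda_r = m$. Then the innermost operator $D^{(m)}$ already annihilates $\Delta(\tau,n)$ by Proposition~\ref{spehlemma}, forcing the entire composition $D^{(\lambda_1)} \circ \cdots \circ D^{(\lambda_r)}$ to vanish identically on $\Delta(\tau,n)$, hence the same for $\mathcal{W}_{H,\phi_\lambda}$ and, via Theorem~\ref{gourevitchtheorem}, for every Whittaker pair with this orbit.

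For the non-vanishing at $\lambda = [a^n]$, the composition becomes $(D^{(a)})^{\circ n}$. Using the factorization $D^{(a)} = \mathcal{W}_a \circ c_{P_{[(n-1)a,a]}}$ from the display immediately preceding the statement, each application of $D^{(a)}$, after restriction to the top-left $GL_{(k-1)a}$ factor, sends $\Delta(\tau,k)$ into a twist of $\Delta(\tau,k-1)$. Iterating $n-1$ times lands inside (a twist of) $\tau$ on $GL_a$, and a final application of $D^{(a)} = \mathcal{W}_a$ returns the generic Whittaker coefficient of the cuspidal representation $\tau$, which is nonzero by Shalika's multiplicity-one theorem.

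The main obstacle I anticipate is the inductive step used for non-vanishing at $[a^n]$: one must ensure that the restriction of $D^{(a)}\Delta(\tau,k)$ to the appropriate top-left factor genuinely lies in (a twist of) $\Delta(\tau,k-1)$, so that Proposition~\ref{spehlemma} can be invoked at each stage. This uses the constant-term description of Lemma~\ref{baiyinglemma} together with the enumeration of surviving Weyl cosets in Corollary~\ref{advgeomlemma}; both are already in place in the excerpt. By contrast, the vanishing direction is a clean combinatorial consequence of Proposition~\ref{spehlemma} and Lemma~\ref{lemmapermute}.
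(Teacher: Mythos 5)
Your proposal is correct and rests on the same ingredients as the paper's own (very terse) proof: Proposition~\ref{spehlemma} to restrict the allowed derivative to $D^{(a)}$, the fact that $D^{(a)}$ carries $\Delta(\tau,k)$ (restricted to the top-left corner) into a twist of $\Delta(\tau,k-1)$, the composition identity $\mathcal{W}_{H,\phi_\lambda}=D^{(\lambda_1)}\circ\cdots\circ D^{(\lambda_r)}$, and Theorem~\ref{gourevitchtheorem} to reduce to a standard pair. The one place you deviate is in the vanishing direction: you invoke Lemma~\ref{lemmapermute} to permute the Jordan blocks so that a part $m\neq a$ sits innermost and is killed immediately by Proposition~\ref{spehlemma}, whereas the paper's proof leaves this step implicit and is most naturally completed by a downward induction on $n$ (if the innermost part equals $a$, pass to $\Delta(\tau,n-1)$ and repeat). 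Both closings are sound; your permutation route makes the treatment of arbitrary orderings of $\lambda$ explicit, at the cost of needing the extra lemma, while the paper's implicit induction avoids Lemma~\ref{lemmapermute} but requires a short argument to push the induction through. Your handling of the non-vanishing at $[a^n]$ — iterating $D^{(a)}$ down to $\tau$ and appealing to Shalika — is exactly what the paper intends, and correctly flags the one point worth spelling out (that each $D^{(a)}$ genuinely lands in a twist of the next Speh representation, by Lemma~\ref{baiyinglemma} and Corollary~\ref{advgeomlemma}).
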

\begin{proof}
    For any pure tensor $f\in\Delta(\tau,n)$, by Proposition \ref{spehlemma}, the only allowed derivative is $D^{(a)}$. When restricting $D^{(a)}f$ to the subgroup $GL_{(n-1)a}$ embedded in the top-left corner, according to the proof of Proposition \ref{spehlemma}, we can see that $D^{(a)}f$ belongs to the generalized Speh representation $\Delta(\tau,n-1)$. 
    For the Whittaker pair $(H,\phi)$ with $H = (n-1,\ldots,1-n)$ and $\phi$ corresponding to the orbit $[a^n]$, for any pure tensor $f$, the corresponding degenerate Fourier coefficient $\mathcal{W}_{H,\phi}(f)$ is equal to the derivative $D^{(a)}\circ \ldots \circ D^{(a)}f$.
\end{proof}

\subsection{Induction from Discrete Spectrum Data}
We will study the Whittaker support of the Eisenstein series constructed from the induced representation from generalized Speh representations on a maximal parabolic subgroup:
\[
    I(\tau_1,\tau_2;\underline{s}) = \Ind_{P_{[n_1,n_2]}}^{G_n}(\Delta(\tau_1,n_1)|\cdot|^{s_1}\boxtimes \Delta(\tau_2,n_2)|\cdot|^{s_2}).
\]
We can construct an Eisenstein series $E_{\underline{s}}(\tilde{\phi})$ for any vector $\phi \in I(\tau_1,\tau_2;\underline{s})$. In the following two cases, this Eisenstein series will exhibit no poles:
\begin{enumerate}
    \item $\tau_1\not\cong\tau_2$,
    \item $\tau_1\cong\tau_2\cong \tau$, with $s_1-s_2 \gg 0$.
\end{enumerate}
\subsubsection{Case $\tau_1\not\cong\tau_2$}
Assuming that $\tau_i$ is a cuspidal automorphic representation on $GL_{a_i}$, the generalized Speh representation $\Delta(\tau_i,n_i)$ is an automorphic representation for the group $GL_{a_in_i}$. By Corollary \ref{advgeomlemma},
\begin{align*}
    D^{(s)} E_{\underline{s}}(\tilde{\varphi},g)
    =\sum_{w\in \mathbb{W}_{n_1,n_2}^s}\mathcal{W}_s\left(\sum_{m'\in M_{[n-s]}\cap w^{-1}P_{[n_1,n_2]}w\backslash M_{[n-s]}(k)}\mathcal{A}_{w}\left(c_{w^{-1}M_{[n-s]}w\cap P_{[n_1,n_2]}}\tilde\varphi\right)(wm'g)\right),
\end{align*}
the permitted constant terms are those along the parabolic subgroup $P_{[a_1(n_1-r_1),a_1r_1]}\times P_{[a_2(n_2-r_2),a_2r_2]}$ of the subgroup $GL_{a_1n_1}\times GL_{a_2n_2}$ for permitted $r_1,r_2$. Therefore, for the $r$-th derivative, the permitted constant terms in the sum correspond to any block partition
\[
    [n_1,n_2]\rightsquigarrow [n_1-p_1,p_1;n_2-p_2,p_2]
\]
satisfying $p_1a_1 + p_2a_2 = r$. The operator $\mathcal{A}_w$ corresponds to the Weyl group element
\[
    w = (1,2,\ldots,n_1-r_1;n_1+1,\ldots,n_1+n_2-r_2;n_1-r_1+1,\ldots,n_1;n_1+n_2-r_2+1;n_1+n_2)
\]
that intertwines between the following two principal series:
\begin{align*}
    &I\left(\Delta(\tau_1,n_1-r_1)|\cdot|^{-\frac{r_1}{2}+s_1}\boxtimes \Delta(\tau_1,r_1)|\cdot|^{\frac{n_1-r_1}{2}+s_1}\boxtimes\Delta(\tau_2,n_2-r_2)|\cdot|^{-\frac{r_2}{2}+s_2}\boxtimes \Delta(\tau_2,r_2)|\cdot|^{\frac{n_2-r_2}{2}+s_2}\right)\\
    \xrightarrow{\mathcal{A}_w}&\\
    &I\left(\Delta(\tau_1,n_1-r_1)|\cdot|^{-\frac{r_1}{2}+s_1}\boxtimes\Delta(\tau_2,n_2-r_2)|\cdot|^{-\frac{r_2}{2}+s_2}\boxtimes \Delta(\tau_1,r_1)|\cdot|^{\frac{n_1-r_1}{2}+s_1}\boxtimes \Delta(\tau_2,r_2)|\cdot|^{\frac{n_2-r_2}{2}+s_2}\right).
\end{align*}
Since $\tau_1\not\cong\tau_2$, the Eisenstein series constructed from the induced representation $I(\Delta(\tau_1,r_1)|\cdot|^{\frac{n_1-r_1}{2}+s_1}\boxtimes\Delta(\tau_2,n_2-r_2)|\cdot|^{-\frac{r_2}{2}+s_2})$ does not have any pole in the region $\mathrm{Re}(s_1-s_2)\gg 0$, and the intertwining operator $\mathcal{A}_w$ does not introduce any extra vanishing. In this situation, the nonzero derivatives can be described in the following proposition:
\begin{proposition}\label{notequal}
    If $\tau_1\not\cong \tau_2$, the only possibly nonzero derivatives of $I(\tau_1,\tau_2;s)$ are $D^{(a_1)}, D^{(a_2)}$ and $D^{(a_1+a_2)}$.
\end{proposition}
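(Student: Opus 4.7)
The plan is to combine the geometric-lemma decomposition of Corollary \ref{advgeomlemma} with Lemma \ref{baiyinglemma}, which constrains the non-vanishing constant terms of $\Delta(\tau_i, n_i)$, and Corollary \ref{spehwhittaker}, which constrains the non-vanishing generic Whittaker coefficients of $\Delta(\tau_i, r_i)$. The paragraph preceding the proposition already supplies the non-vanishing of the intertwining operator $\mathcal{A}_w$ for $\tau_1 \not\cong \tau_2$ in the region $\mathrm{Re}(s_1 - s_2) \gg 0$, which I would take for granted.

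First I would apply Corollary \ref{advgeomlemma} to expand $D^{(s)} E_{\underline s}(\tilde\varphi)$ as a sum indexed by $w \in \mathbb{W}_{n_1 a_1,\, n_2 a_2}^{s}$. By Lemma \ref{cosetlemma}, each such $w$ corresponds to a pair $(m_1, m_2)$ with $m_1 + m_2 = s$ and $m_i \le n_i a_i$, and the inner constant-term operator amounts to taking constant terms of each $\Delta(\tau_i, n_i)$ along the parabolic with Levi $GL_{n_i a_i - m_i} \times GL_{m_i}$. Lemma \ref{baiyinglemma}, combined with the cuspidality of each $\tau_i$, forces $m_i = r_i a_i$ for some integer $0 \le r_i \le n_i$; the surviving summands are then parametrised by pairs $(r_1, r_2)$ satisfying $r_1 a_1 + r_2 a_2 = s$, and the inner constant term delivers a section of $\Ind(\Delta(\tau_i, n_i - r_i)|\cdot|^{-r_i/2+s_i} \boxtimes \Delta(\tau_i, r_i)|\cdot|^{(n_i - r_i)/2 + s_i})$ on each Speh factor.

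After applying $\mathcal{A}_w$, the resulting section has its last $GL_s$-block induced from $\Delta(\tau_1, r_1)|\cdot|^{\frac{n_1-r_1}{2}+s_1} \boxtimes \Delta(\tau_2, r_2)|\cdot|^{\frac{n_2-r_2}{2}+s_2}$, and $\mathcal{W}_s$ computes its generic Whittaker coefficient on this block. Since the generic Whittaker coefficient of a parabolically induced representation of $GL$ vanishes as soon as any inducing factor is non-generic, Corollary \ref{spehwhittaker} forces $r_i \in \{0, 1\}$ for both $i$. The only nontrivial solutions are $(r_1, r_2) \in \{(1, 0), (0, 1), (1, 1)\}$, which give precisely $s \in \{a_1, a_2, a_1 + a_2\}$.

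The main obstacle I anticipate is the bookkeeping around $\mathcal{A}_w$: one must verify that its application genuinely yields the stated induced representation on the $GL_s$-block (rather than a twisted variant that would not permit a direct invocation of Corollary \ref{spehwhittaker} on each factor), and that no cancellation occurs between distinct surviving summands in Corollary \ref{advgeomlemma}. Both points reduce to the regularity of $\mathcal{A}_w$ under the hypothesis $\tau_1 \not\cong \tau_2$, which is precisely the input already recorded in the paragraph above the proposition.
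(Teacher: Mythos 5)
Your proposal is correct and follows essentially the same route as the paper: decompose $D^{(s)} E_{\underline s}$ via Corollary \ref{advgeomlemma}, use cuspidality of $\tau_i$ (via Lemma \ref{baiyinglemma}) to reduce the surviving constant terms to blocks of size $r_i a_i$, and then argue that the inner generic Whittaker coefficient $\mathcal{W}_s$ on the $GL_s$-block forces $r_1, r_2 \le 1$. The one place you diverge is the last step: the paper phrases it as an induction on the rank (the induction hypothesis gives that the Eisenstein series on $GL_{s}$ from $\Delta(\tau_1,r_1)\boxtimes\Delta(\tau_2,r_2)$ admits nonzero derivatives only for $a_1, a_2, a_1+a_2$, so its generic Whittaker coefficient vanishes unless $r_i \le 1$), whereas you invoke Corollary \ref{spehwhittaker} directly together with the factorization of the generic Whittaker coefficient of an Eisenstein series through the Whittaker coefficients of the inducing data. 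The two readings are equivalent in substance and neither is circular; your version is arguably more direct, while the paper's phrasing makes the inductive structure uniform with the surrounding propositions. Your closing remark about possible cancellation between summands is not actually needed, since the proposition only asserts that other derivatives \emph{vanish}, not that the listed ones are nonzero.
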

\begin{proof}
    In the base cases when $(n_1,n_2) = (1,0)$ or $(n_1,n_2) = (0,1)$, the principal series $I(\tau_1,\tau_2;s)$ is a single cuspidal automorphic representation, and the only nonzero derivative is the highest derivative $D^{(a_1)}$ or $D^{(a_2)}$. If $(n_1,n_2) = (1,1)$, the principal series $I(\tau_1,\tau_2;s)$ is generic, and the derivative $D^{(a_1+a_2)}$ calculates the generic Whittaker coefficient of the corresponding Eisenstein series. For other derivatives, since the constant term along any parabolic subgroup $Q$ is zero unless $Q$ and $P_{[n_1,n_2]}$ are associate, there are only two other possibly nonzero derivatives $D^{(a_1)}$ and $D^{(a_2)}$. Assuming the induction hypothesis holds for all lower-rank cases, the operator $\mathcal{W}_r$ calculates the generic Whittaker coefficient for the induced representation $I(\Delta(\tau_1,r_1)|\cdot|^{\frac{n_1-r_1}{2}+s_1}\boxtimes \Delta(\tau_2,r_2)|\cdot|^{\frac{n_2-r_2}{2}+s_2})$. By the induction hypothesis, this Whittaker coefficient is nonzero only if $r_1 \leq 1$ and $r_2\leq 1$. When corresponded to the order $r = r_1a_1 + r_2 a_2$ of the derivative, the only possibilities for $r$ are $r= a_1, a_2$ and $a_1+a_2$.
\end{proof}


\subsubsection{Case $\tau_1\cong\tau_2\cong\tau$}
In this case, by \cite[Theorem A]{zhang2022some}, the residues of the Eisenstein series $E_{\underline{s}}(\tilde{\varphi})$ at the simple poles $s_1-s_2 = \frac{n_1+n_2}{2}-\alpha$ for $\alpha\in \{0,1,\ldots,n_1-1\}$ is the irreducible quotient of the principal series $I(\tau_1,\tau_2;\underline{s})$. In this case, the intertwining operator $\mathcal{A}_w$ in (\ref{geomlemma})
may introduce extra zeros. We will analyze the possible zeros or poles introduced by the intertwining operator $\mathcal{A}_w$. 

\begin{proposition}\label{propres}
    The derivative $D^{(m)}$ of the residue of the Eisenstein series $E_{\underline{s}}$ at $s_1-s_2 = \frac{n_1+n_2}{2}-\alpha$, with $\alpha \in\{0,1,\ldots,\min\{n_1,n_2\}-1\}$ is possibly nonzero only if
    \begin{enumerate}
        \item $m = a$ or $2a$ when $\alpha > 0$,
        \item $m = a$ when $\alpha = 0$.
    \end{enumerate}
    However, if $n_2 = 1$, in which case the only allowed $\alpha$ is 0, then $D^{(a)}$ also vanishes.
\end{proposition}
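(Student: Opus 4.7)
The plan is to apply Corollary \ref{advgeomlemma} term by term to $D^{(m)}E_{\underline{s}}(\tilde\varphi)$ and then detect which surviving contributions survive passage to the residue, using Lemma \ref{baiyinglemma}, Proposition \ref{notequal} and Corollary \ref{spehwhittaker}. First I would use Corollary \ref{advgeomlemma} to write the derivative as a finite sum indexed by $w\in \mathbb{W}_{n_1 a,n_2 a}^{m}$ of composites consisting of a constant term along $P_{[(n_1-r_1)a,\,r_1 a]}\times P_{[(n_2-r_2)a,\,r_2 a]}$, an intertwining operator $\mathcal{A}_w$, and a generic Whittaker operator $\mathcal{W}_m$. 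By Lemma \ref{baiyinglemma}, the only nonvanishing constant terms of each $\Delta(\tau,n_i)$ are precisely of this type, so the surviving $w$ are parametrized by pairs $(r_1,r_2)$ with $0\leq r_i\leq n_i$ and $m=(r_1+r_2)a$; in particular $a\mid m$.

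Next, the inner Whittaker coefficient $\mathcal{W}_m$ is applied to an Eisenstein series on $GL_{m}$ built from $\mathrm{Ind}_{P_{[r_1 a,r_2 a]}}^{G_{m}}\bigl(\Delta(\tau,r_1)|\cdot|^{\ast}\boxtimes\Delta(\tau,r_2)|\cdot|^{\ast}\bigr)$ with the obvious $s_1,s_2$-shifts. Since $\underline{s}$ is still generic at this stage (the residue is only taken afterwards), the reasoning of Proposition \ref{notequal} carries over: the fact that both blocks are built from the same cuspidal $\tau$ is inessential in the region where the intertwining operators defining $\mathcal{W}_m$ are holomorphic. This forces $r_1,r_2\leq 1$, so the only candidates before taking residues are $m=a$ (from $(r_1,r_2)\in\{(1,0),(0,1)\}$) and $m=2a$ (from $(r_1,r_2)=(1,1)$).

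Now I would pass to the residue. For $\alpha=0$ with $n_2\geq 2$, the residue of $E_{\underline{s}}$ is the Speh representation $\Delta(\tau,n_1+n_2)$ by \cite[Theorem A]{zhang2022some}, and Corollary \ref{spehwhittaker} immediately gives that only $D^{(a)}$ can be nonzero on it, killing the $m=2a$ contribution and proving part (2). For $\alpha\in\{1,\ldots,\min(n_1,n_2)-1\}$, no further vanishing of $\mathcal{A}_w$ is forced at the residual hyperplane, and both $D^{(a)}$ and $D^{(2a)}$ may survive, yielding part (1).

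The degenerate case $n_2=1$, where only $\alpha=0$ is permitted, is the main obstacle. The $m=2a$ contribution is eliminated as above, but the two $m=a$ terms indexed by $w(0)$ and $w(1)$ require extra input. The expected mechanism is that the normalizing $L$-factors of $\mathcal{A}_{w(0)}$ and $\mathcal{A}_{w(1)}$, evaluated at the boundary point $s_1-s_2=(n_1+1)/2$ where the small block has rank one, contribute a zero that cancels the simple pole of $E_{\underline{s}}$, so the residue of $D^{(a)}E_{\underline{s}}$ itself vanishes. The hard part is making this cancellation explicit by tracking the normalizing factors of the intertwining operators at the edge of their region of convergence, which is where the present strategy requires the most care.
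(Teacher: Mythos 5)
Your overall architecture matches the paper's: apply Corollary \ref{advgeomlemma}, invoke Lemma \ref{baiyinglemma} to restrict the constant terms, and use the reasoning of Proposition \ref{notequal} to force $r_1,r_2\leq 1$ before passing to the residue. That part is fine, and your observation that $a\mid m$ is correct. Where you diverge is in how the residue operator is handled.

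For $\alpha=0$ with $n_2\geq 2$ you shortcut through the identification of the residue with $\Delta(\tau,n_1+n_2)$ and then invoke the Speh result (you cite Corollary \ref{spehwhittaker}, though the statement you actually need is Proposition \ref{spehlemma}). This is a legitimate alternative: the paper instead argues directly on the terms of Corollary \ref{advgeomlemma}, computing the normalization factor $r(w,\underline{s})$ of each $\mathcal{A}_w$ (a ratio of completed $L$-functions of $\tau\times\hat\tau$) and observing that the $(r_1,r_2)=(0,1)$ and $(1,1)$ contributions are annihilated by the residue because the shifted Eisenstein series they produce no longer has a pole at $s_1-s_2=(n_1+n_2)/2$. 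Your route is cleaner but proves only part (2); the paper's route also yields the pole/zero bookkeeping needed for part (1) and the $n_2=1$ clause.

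The genuine gap is exactly where you flag it. You assert that for $\alpha>0$ ``no further vanishing of $\mathcal{A}_w$ is forced at the residual hyperplane,'' but this is the content that the paper proves rather than assumes: it writes $\mathcal{A}_w = r(w,\underline{s})N(w,\underline{s})$ and, after cancellation, finds $r(w,\underline{s}) = L(n_1-\alpha+t)/L(n_1+n_2-\alpha+t)\cdot(\text{$\epsilon$-factors})$ for the $(1,0)$ term and $L(n_1+1-\alpha+t)/L(n_1+n_2-\alpha+t)\cdot(\text{$\epsilon$-factors})$ for the $(1,1)$ term, reading off precisely at which $\alpha$ a pole or zero appears (a simple pole of $\mathcal{A}_w$ at $\alpha=n_1-1$ in the first case, none in the second). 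You skip this computation and, as you yourself say, the $n_2=1$ case then ``requires the most care'': you conjecture a cancellation between a zero of the normalization factor and the pole of $E_{\underline{s}}$, but you do not produce it. The paper's proof is exactly the explicit tracking of these $L$-factor ratios that your proposal defers, so the step you identify as the hard part is not an optional refinement but the core of the argument for part (1) and the final clause. To complete your proof you would need to carry out the $r(w,\underline{s})$ computation yourself.
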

\begin{proof}
    Since the integral operators $c_{P'\cap w^{-1}M w}, \mathcal{A}_w$ and $\mathcal{W}_s$ in Corollary \ref{advgeomlemma} are either over a compact domain or can be analytically continued to the whole complex plane, they all commute with the residue operator. The result from Proposition \ref{notequal} still holds, but extra vanishing may be introduced by the operator $\mathcal{A}_w$ and the residue operator. According to the proof of the Proposition \ref{notequal}, the nonzero derivatives $D^{(m)}$ correspond to the cases
    \begin{enumerate}
        \item If $m = a$, then $(r_1,r_2) = (1,0)$ or $(0,1)$.
        \item If $m = 2a$, then $(r_1,r_2) = (1,1)$.
    \end{enumerate}
    When $m = a$, for the term $I_w$ corresponding to the case $(r_1,r_2) = (1,0)$, the constant term operator sends the vector of the induced representation to the induced representation following the intertwining operator:
    \begin{align*}
        &I\left(\Delta(\tau_1,n_1-1)|\cdot|^{-\frac{1}{2}+s_1}\boxtimes \tau|\cdot|^{\frac{n_1-1}{2}+s_1}\boxtimes\Delta(\tau_2,n_2)|\cdot|^{s_2}\right)\\
        \xrightarrow{\mathcal{A}_w}&\\
        &I\left(\Delta(\tau_1,n_1-1)|\cdot|^{-\frac{1}{2}+s_1}\boxtimes\Delta(\tau_2,n_2)|\cdot|^{s_2}\boxtimes \tau|\cdot|^{\frac{n_1-1}{2}+s_1}\right).
    \end{align*}
    If $\alpha < n_1 -1$, the pole of the original Eisenstein series $E_{\underline{s}}$ at $s_1-s_2 = \frac{n_1+n_2}{2}-\alpha$ is still a pole of the Eisenstein series $E^{[n_1-1,n_2]}_{\underline{s}}$ constructed from the principal series $I\left(\Delta(\tau_1,n_1-1)|\cdot|^{-\frac{1}{2}+s_1}\boxtimes\Delta(\tau_2,n_2)|\cdot|^{s_2}\right)$ when restricted to the subgroup $GL_{n_1+n_2 - 1}$ on the top-left corner. The intertwining operator $\mathcal{A}_w$ has no pole unless when $\alpha = n_1-1$. When $\alpha = n_1-1$, $s_1-s_2 = \frac{n_1+n_2}{2}-\alpha$ is no longer a pole of $E_{\underline{s}}^{[n_1-1,n_2]}$. The intertwining operator can be written as the product
    $\mathcal{A}_w = r(w,\underline{s})N(w,\underline{s})$ (cf. \cite{moeglin1989spectre}) of the normalized intertwining operator $N(w,\underline{s})$ 
    with the normalization factor
    \[
        r(w,\underline{s}) = \prod_{\substack{i<j\\w(i)>w(j)}}\frac{L(\nu_i-\nu_j,\tau\times\hat\tau)}{L(1+\nu_i-\nu_j,\tau\times\hat\tau)\epsilon(\nu_i-\nu_j,\tau\times\hat\tau,\psi)}.
    \]
    In this case, assuming $s_1-s_2 = \frac{n_1+n_2}{2}-\alpha+t$, and after cancellation between numerators and denominators, the normalization factor
    \[
        r(w,\underline{s}) = \frac{L\left(\frac{n_1-n_2}{2}+s_1-s_2\right)}{L\left(\frac{n_1+n_2}{2}+s_1-s_2\right)}\times\epsilon\text{ factors} = \frac{L(n_1-\alpha+t)}{L(n_1+n_2-\alpha+t)}\times\epsilon\text{ factors}.
    \]
    When $\alpha = n_1-1$, the intertwining operator $\mathcal{A}_w$ has a simple pole at $s = \frac{n_1+n_2}{2}-\alpha$. \\ 
    
    In the case $(r_1,r_2) = (0,1)$, the intertwining operator $\mathcal{A}_w$ is a constant operator on any vector in the principal series $I\left(\Delta(\tau_1,n_1)|\cdot|^{s_1}\boxtimes \Delta(\tau_2,n_2-1)|\cdot|^{-\frac{1}{2}+s_2}\boxtimes \tau|\cdot|^{\frac{n_2-1}{2}+s_2}\right)$, and the whole term corresponding to this case in Corollary \ref{advgeomlemma} is killed by the residue operator if $\alpha = 0$.\\

    For $m = 2a$, the intertwining operator $\mathcal{A}_w$ intertwines between the following two principal series:
    \begin{align*}
        &I\left(\Delta(\tau_1,n_1-1)|\cdot|^{-\frac{1}{2}+s_1}\boxtimes \tau|\cdot|^{\frac{n_1-1}{2}+s_1}\boxtimes\Delta(\tau_2,n_2-1)|\cdot|^{-\frac{1}{2}+s_2}\boxtimes \tau|\cdot|^{\frac{n_2-1}{2}+s_2}\right)\\
        \xrightarrow{\mathcal{A}_w}&\\
        &I\left(\Delta(\tau_1,n_1-1)|\cdot|^{-\frac{1}{2}+s_1}\boxtimes\Delta(\tau_2,n_2-1)|\cdot|^{-\frac{1}{2}+s_2}\boxtimes \tau|\cdot|^{\frac{n_1-1}{2}+s_1}\boxtimes \tau|\cdot|^{\frac{n_2-1}{2}+s_2}\right).
    \end{align*}
    In this case, assuming $s_1-s_2 = \frac{n_1+n_2}{2}-\alpha+t$, and after cancellation between numerators and denominators, the normalization factor
    \[
        r(w,\underline{s}) = \frac{L\left(\frac{n_1-n_2}{2}+1+s_1-s_2\right)}{L\left(\frac{n_1+n_2}{2}+s_1-s_2\right)}\times\epsilon\text{ factors} = \frac{L(n_1+1-\alpha+t)}{L(n_1+n_2-\alpha+t)}\times\epsilon\text{ factors}.
    \]
    This normalization factor will not introduce any pole to the intertwining operator. The only complication occurs when $\alpha = 0$, in which case $s_1-s_2 = \frac{n_1+n_2}{2}$ is no longer a pole of the resulting Eisenstein series, thus the derivative $D^{(2a)}$ will be killed by the residue operator.
\end{proof}

\subsection{Whittaker Support of Eisenstein Series with Discrete Spectrum Data}
For the principal series $I(\tau_1,\tau_2;\underline{s})$ induced from discrete spectrum data, if $\tau_1\not\cong\tau_2$, by Proposition \ref{notequal}, the allowed derivatives are $D^{(a_1)}$, $D^{(a_2)}$ and $D^{(a_1+a_2)}$, sending the Eisenstein series on $GL_n$ to an Eisenstein series $E^{[n_1-1,n_2]}$, $E^{[n_1,n_2-1]}$ and $E^{[n_1-1,n_2-1]}$, respectively. Applying Proposition \ref{notequal} iteratively, a degenerate Whittaker coefficient $\mathcal{W}_{H,\phi_\lambda}$ is zero if the partition $\lambda$ contains numbers other than $a_1,a_2$ and $a_1+a_2$.
\begin{proposition}\label{isobaricnonequal}
    Without loss of generality, assuming $n_1\geq n_2$, for $\tau_1\not\cong \tau_2$ two cuspidal automorphic representations of $GL_{a_1}, GL_{a_2}$, respectively, the Eisenstein series $E_{\underline{s}}$ associated to the principal series $I(\tau_1,\tau_2;\underline{s})$ has no poles when $\mathrm{Re}(s_1-s_2)> 0$, and the maximal orbit that allows a degenerate Whittaker coefficient corresponds to the partition $((a_1+a_2)^{n_2}a_1^{n_1-n_2})$.
\end{proposition}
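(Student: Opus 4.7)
The plan is to separately verify the holomorphy of $E_{\underline{s}}$ in the region $\mathrm{Re}(s_1-s_2)>0$ and to determine the maximal orbit in $\mathrm{WS}(E_{\underline{s}})$, both by induction on $n=n_1+n_2$ and leaning on Proposition \ref{notequal}, Proposition \ref{spehlemma}, and Corollary \ref{spehwhittaker}.

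The holomorphy assertion follows from the Langlands theory of Eisenstein series on a maximal parabolic: the poles of $E_{\underline{s}}$ are contained in the poles of the normalization factor of the long-Weyl intertwining operator $\mathcal{A}_w$, which is a ratio of Rankin--Selberg $L$-functions of the form $L(s_1-s_2, \Delta(\tau_1,n_1)\times\Delta(\tau_2,n_2)^\vee)$. Since $\tau_1\not\cong\tau_2$ are inequivalent cuspidal representations, these $L$-functions are entire, and no pole arises when $\mathrm{Re}(s_1-s_2)>0$.

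For the maximal orbit, iterating Proposition \ref{notequal} shows that every partition $\lambda\vdash n$ with $\mathcal{W}_{H,\phi_\lambda}(E_{\underline{s}})\not\equiv 0$ has all parts in $\{a_1,a_2,a_1+a_2\}$. Let $a,b,c$ denote the multiplicities of $a_1,a_2,a_1+a_2$ respectively; a bookkeeping via Corollary \ref{advgeomlemma} (each $D^{(a_1)}$ or $D^{(a_1+a_2)}$ consumes one Speh factor of $\tau_1$, and similarly for $\tau_2$) yields $a+c\le n_1$ and $b+c\le n_2$, while $|\lambda|=n$ forces both equalities. The admissible partitions therefore form the one-parameter family $\lambda_c=((a_1+a_2)^c\,a_1^{n_1-c}\,a_2^{n_2-c})$ for $c\in\{0,\ldots,n_2\}$, and a direct comparison of partial sums shows that $\lambda_{n_2}=((a_1+a_2)^{n_2}\,a_1^{n_1-n_2})$ is the dominance-maximum.

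For the non-vanishing $\mathcal{W}_{H,\phi_{\lambda_{n_2}}}(E_{\underline{s}})\neq 0$, the derivative-composition formula of Section 3.3 identifies this Whittaker coefficient with $(D^{(a_1+a_2)})^{n_2}\circ(D^{(a_1)})^{n_1-n_2}$ applied to $E_{\underline{s}}$. Each $D^{(a_1)}$ step reduces, via Proposition \ref{notequal}, to an Eisenstein section attached to $I(\Delta(\tau_1,n_1'-1),\Delta(\tau_2,n_2');\underline{s}')$, so after $n_1-n_2$ applications one lands in an Eisenstein section attached to $I(\Delta(\tau_1,n_2),\Delta(\tau_2,n_2);\underline{s}'')$. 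Each subsequent $D^{(a_1+a_2)}$ reduces $(n_1',n_2')\mapsto(n_1'-1,n_2'-1)$, and the final application produces a scalar equal to the generic Whittaker coefficient of $I(\tau_1\boxtimes\tau_2)$ on $GL_{a_1+a_2}$, a product of non-zero local Whittaker integrals attached to the cuspidal data $\tau_1,\tau_2$. The main technical obstacle is to rule out cancellation between the surviving summands of (\ref{geomlemma}) at each stage: for $D^{(a_1+a_2)}$ only $(r_1,r_2)=(1,1)$ survives, so no cancellation can occur; for $D^{(a_1)}$ only $(r_1,r_2)=(1,0)$ survives when $a_1\neq a_2$, and when $a_1=a_2$ the two surviving terms are supported on linearly independent Eisenstein families built from $\tau_1$ and $\tau_2$ separately and so cannot cancel. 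This disentanglement, combined with the entirety of the relevant $L$-factors already used in the holomorphy step, yields the required non-vanishing at every step of the iteration.
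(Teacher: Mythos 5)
Your proposal follows the same inductive backbone as the paper's proof — iterate Proposition \ref{notequal} to restrict parts of $\lambda$ to $\{a_1,a_2,a_1+a_2\}$, then induct on $n$ — but you package the combinatorics differently and more explicitly. Your multiplicity bookkeeping $a+c\le n_1$, $b+c\le n_2$, together with $|\lambda|=n$ forcing equality, is a clean way to exhibit the whole one-parameter family $\lambda_c$ of admissible partitions and then read off the dominance maximum at $c=n_2$; the paper instead just asserts that the largest partitions have prefix $(a_1+a_2)^t$ and proceeds to induct. You also give an explicit holomorphy argument via Rankin--Selberg $L$-functions, which the paper only alludes to before the proposition and does not re-derive inside the proof. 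So up to this point your route is a genuine and arguably more transparent variant.

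The one place where your argument is softer than the paper's is the $a_1=a_2$ cancellation step. You say the two surviving summands of $D^{(a_1)}E_{\underline{s}}$ ``are supported on linearly independent Eisenstein families \ldots and so cannot cancel.'' Linear independence of $\mathcal{W}_{a_1}$ and $\mathcal{W}_{a_2}$ as Whittaker functions on $GL_a$ would only show that $D^{(a_1)}E_{\underline{s}}$ itself is nonzero; it does not, by itself, show that the \emph{scalar} obtained after applying the remaining $(n_1-n_2-1)$ copies of $D^{(a_1)}$ and $n_2$ copies of $D^{(a_1+a_2)}$ is nonzero, since the subsequent integrations could a priori produce a cancelling contribution from the second summand. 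The paper closes this gap more sharply: by the induction hypothesis the second summand $E^{[n_1,n_2-1]}\boxtimes\mathcal{W}_{a_2}$ has Whittaker support $[(a_1+a_2)^{n_2-1}a_1^{n_1-n_2+1}]$, which is strictly dominated by the partition $[(a_1+a_2)^{n_2}a_1^{n_1-n_2-1}]$ governing the remaining derivatives, so that summand is annihilated outright and no cancellation can arise. Replacing your ``linear independence'' heuristic with this Whittaker-support-dominance argument would make your non-vanishing step airtight and bring it in line with the paper.
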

\begin{proof}
    If $(n_1,n_2) = (1,0)$ or $(0,1)$, the result reduces to the case of cuspidal automorphic representations. If $(n_1,n_2) = (1,1)$, the maximal orbit allowing a Whittaker coefficient is the generic orbit of $GL_{a_1+a_2}$, and hence corresponds to the partition $(a_1+a_2)$. If $(n_1,n_2) = (2,0)$ or $(0,2)$, the result reduces to the cases of generalized Speh representations, and the maximal orbit allowing a Whittaker coefficient corresponds to the partition $(a_1^2)$ and $(a_2^2)$, respectively. We now apply induction on $n = n_1+n_2$ to prove the general case. Among all the partitions containing $a_1,a_2$ and $a_1+a_2$, the largest partitions are those with prefix $(a_1+a_2)^t$ for some $t>0$. Assuming $n_1 > n_2$, for any derivative $D^{(a_1)}$, by Proposition \ref{geomlemma}, 
    \[
        D^{(a_1)}E_{\underline{s}}(g) = E^{[n_1-1,n_2]}\boxtimes \mathcal{W}_{a_1} + \begin{cases}
            0 & \text{if }a_1\not = a_2\\
            E^{[n_1,n_2-1]}\boxtimes \mathcal{W}_{a_2}& \text{if }a_1 = a_2
        \end{cases}
    \]
    By the induction hypothesis, each one of the two terms on the right hand side has Whittaker support corresponding to the partition $[(a_1+a_2)^{n_2}a_1^{n_1-n_2-1}]$ and $[(a_1+a_2)^{n_2-1}a_1^{n_1-n_2+1}]$, respectively. In either case, the second term is killed by the degenerate Whittaker coefficient operator corresponding to the partition $[(a_1+a_2)^{n_2}a_1^{n_1-n_2-1}]$. If $n_1 = n_2$, applying the derivative operator $D^{(a_1+a_2)}$, we have
    \[
        D^{(a_1+a_2)}E(\underline{s},g) = E^{[n_1-1,n_2-1]}\boxtimes \mathcal{W}_{a_1+a_2}
    \]
    with $\mathcal{W}_{a_1+a_2}$ the Whittaker coefficient of the cuspidal Eisenstein series $E^{[1,1]}$. By the induction hypothesis, the maximal orbit allowing a degenerate Whittaker coefficent for $E^{[n_1-1,n_2-1]}$ is $[(a_1+a_2)^{n_1-1}]$. Since the only allowed derivatives are $D^{(a_1)}, D^{(a_2)}$ and $D^{(a_1+a_2)}$, the maximal orbit allowing a degenerate Whittaker coefficient is thus $[(a_1+a_2)^{n_1}]$.
\end{proof}
In the situation when $\tau_1 = \tau_2$, we can calculate the Whittaker support of the residue of the Eisenstein series at $s = \frac{n_1+n_2}{2} - \alpha$. 
\begin{proposition}
    The Whittaker support of the residue of the Eisenstein series $\mathrm{Res}_{s = \frac{n_1+n_2}{2} - \alpha}E_{\underline{s}}(g)$ is $[a^{n-\alpha}]+[a^{\alpha}]$.
\end{proposition}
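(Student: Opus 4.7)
We interpret $[a^{n-\alpha}]+[a^{\alpha}]$ as the coordinate-wise sum of partitions of $an$, which equals $[(2a)^{\alpha}a^{n-2\alpha}]$ (using $\alpha\leq\min(n_1,n_2)-1<n/2$). The plan is an outer induction on $n=n_1+n_2$, combined with an inner descending induction on $\alpha$, establishing both an upper bound (vanishing at orbits outside the closure of $[(2a)^{\alpha}a^{n-2\alpha}]$ in the dominance order) and non-vanishing at $[(2a)^{\alpha}a^{n-2\alpha}]$ itself. The base case $\alpha=0$ for arbitrary $n_1,n_2$ reduces to the Moeglin-Waldspurger identification of the top residue with the generalized Speh representation $\Delta(\tau,n)$, whose Whittaker support is $[a^{n}]=[a^{n}]+[a^{0}]$ by Corollary \ref{spehwhittaker}.

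For the upper bound, Proposition \ref{propres} restricts the possibly nonzero derivatives of the residue to $D^{(a)}$ and $D^{(2a)}$. Using the composition-of-derivatives identification in Section 3.3 together with Lemma \ref{lemmapermute}, any partition in the Whittaker support is of the form $[(2a)^{\beta}a^{n-2\beta}]$, and its corresponding coefficient can be rewritten as an iterated composition $(D^{(a)})^{n-2\beta}\circ(D^{(2a)})^{\beta}$ in which the $D^{(2a)}$'s are applied first. Inspecting the proof of Proposition \ref{propres}, for $\alpha\geq 1$ the operator $D^{(2a)}$ applied to $\mathrm{Res}_{\alpha}E^{[n_1,n_2]}_{\underline{s}}$ produces, upon restriction to $GL_{a(n-2)}$, a nonzero multiple of $\mathrm{Res}_{\alpha-1}E^{[n_1-1,n_2-1]}_{\underline{s}}$; for $\alpha=0$ it vanishes identically. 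Iterating gives $(D^{(2a)})^{\alpha+1}\mathrm{Res}_{\alpha}E_{\underline{s}}=0$, forcing $\beta\leq\alpha$; the resulting partitions are precisely those dominated by $[(2a)^{\alpha}a^{n-2\alpha}]$.

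For the non-vanishing of $[(2a)^{\alpha}a^{n-2\alpha}]$, I iterate the forward identification: after $\alpha$ applications of $D^{(2a)}$, we arrive on $GL_{a(n-2\alpha)}$ at the top residue $\mathrm{Res}_{0}E^{[n_1-\alpha,n_2-\alpha]}_{\underline{s}}$, which by Moeglin-Waldspurger (via induction by stages; compare Lemma \ref{baiyinglemma}) coincides with $\Delta(\tau,n-2\alpha)$. Its Whittaker support is $[a^{n-2\alpha}]$ by Corollary \ref{spehwhittaker}, realized by $(D^{(a)})^{n-2\alpha}$. Concatenating, the composition $(D^{(a)})^{n-2\alpha}\circ(D^{(2a)})^{\alpha}$ produces a nonzero Whittaker coefficient of $\mathrm{Res}_{\alpha}E_{\underline{s}}$ attached to $[(2a)^{\alpha}a^{n-2\alpha}]$, completing the inductive step.

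The main obstacle will be verifying that the identification $D^{(2a)}\mathrm{Res}_{\alpha}E^{[n_1,n_2]}_{\underline{s}}\propto\mathrm{Res}_{\alpha-1}E^{[n_1-1,n_2-1]}_{\underline{s}}$ is clean at every step of the iteration, so that after $\alpha$ applications we land exactly on $\Delta(\tau,n-2\alpha)$ rather than on a different irreducible quotient sharing its Whittaker support. This requires careful control of the normalization factor $r(w,\underline{s})=L(n_1+1-\alpha+t)/L(n_1+n_2-\alpha+t)\times(\epsilon\text{-factors})$ from Proposition \ref{propres} at $t=0$, together with the commutation of the residue operator through each $D^{(2a)}$ in the iteration, so that no spurious pole or zero accumulates.
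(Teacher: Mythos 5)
Your proof is correct, but it takes a genuinely different route from the paper's. The paper's argument is built around $D^{(a)}$: it writes $D^{(a)}E_{\underline{s}} = E_{[n_1-1,n_2]}\boxtimes \mathcal{W}_{a} + E_{[n_1,n_2-1]}\boxtimes \mathcal{W}'_{a}$, then performs a three-way case analysis ($\alpha = 0$, $0 < \alpha < n_1-1$, $\alpha = n_1 - 1$) to track which of the two summands absorbs the residue, either directly as a pole of the lower-rank Eisenstein series or indirectly through a pole of the intertwining operator $\mathcal{A}_w$, before appealing to the induction hypothesis on the surviving summand(s). You instead pivot the entire argument on $D^{(2a)}$: the single clean identification $D^{(2a)}\mathrm{Res}_{\alpha}E^{[n_1,n_2]}_{\underline{s}}\propto\mathrm{Res}_{\alpha-1}E^{[n_1-1,n_2-1]}_{\underline{s}}$ (together with its vanishing at $\alpha = 0$ from Proposition \ref{propres}), iterated $\alpha$ times, lands on $\Delta(\tau,n-2\alpha)$; then $(D^{(2a)})^{\alpha+1} = 0$ by Proposition \ref{spehlemma} gives the upper bound $\beta\leq\alpha$ on allowable $[(2a)^{\beta}a^{n-2\beta}]$, and the composition $(D^{(a)})^{n-2\alpha}\circ(D^{(2a)})^{\alpha}$ gives non-vanishing at the top. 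This buys you a more symmetric argument in $n_1, n_2$, avoids the casework entirely, and has the further merit of making \emph{both} the vanishing bound and the non-vanishing at $[(2a)^{\alpha}a^{n-2\alpha}]$ explicit, whereas the paper's proof emphasizes the vanishing side. Your closing concern about the normalization factor is legitimate but is easily dispatched: at each iterate the relevant ratio $L(n_1+1-\alpha+t)/L(n_1+n_2-\alpha+t)$ is finite and nonzero at $t=0$ since for $1\leq\alpha\leq\min(n_1,n_2)-1$ the $L$-arguments stay strictly greater than $1$, so no pole of $L(\cdot,\tau\times\hat\tau)$ is hit and the residue operator commutes cleanly through each $D^{(2a)}$. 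One small point worth making explicit: the reduction "any partition in the Whittaker support has parts only in $\{a,2a\}$" needs Lemma \ref{lemmapermute} applied to move a hypothetical bad part to the innermost position before citing Proposition \ref{propres}; you gesture at this but it deserves one sentence.
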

\begin{proof}
    The first residue appears in the the case when $(n_1,n_2) = (1,1)$. By Proposition \ref{propres}, the only allowed $\alpha$ with a residue at $s = \frac{n_1+n_2}{2}-\alpha$ appears when $\alpha = 0$, from which we obtain a generalized Speh representation $\Delta(\tau,2)$. The only allowed derivative for the residue in this case is $D^{(a)}$. Through the proof of Proposition \ref{propres}, there is a pole of the intertwining operator $\mathcal{A}_w$ for the case $(r_1,r_2) = (1,0)$, and it is clear that the derivative of the residue $D^{(a)}\mathrm{Res}_{s=\frac{n_1+n_2}{2}-\alpha}E_{\underline{s}}$ is a vector in $\tau$ after restricted to $GL_{a}$. Assuming the proposition holds for all smaller $n = n_1+n_2$, the derivative $D^{(a)}E_{\underline{s}}$ can be written as a sum of two terms:
    \[
        D^{(a)}E_{\underline{s}} = E_{[n_1-1,n_2]}\boxtimes \mathcal{W}_{a} + E_{[n_1,n_2-1]}\boxtimes \mathcal{W}_{a}'.
    \]
    If $\alpha < n_1-1$, the pole $s = \frac{n_1+n_2}{2}-\alpha$ is still a pole of the new Eisenstein series $E_{[n_1-1,n_2]}$, but if $\alpha = n_1 - 1$, it is no longer a pole of $E_{[n_1-1,n_2]}$ but instead a pole of the intertwininig operator $\mathcal{A}_w$. If $\alpha >0$ the pole $s = \frac{n_1+n_2}{2}-\alpha$ is still a pole of $E_{[n_1,n_2-1]}$. Therefore, when $n_2 > 1$, 
    \[
        D^{(a)}\mathrm{Res}_{s = \frac{n_1+n_2}{2}-\alpha}E(\underline{s},g) = \begin{cases} 
            \mathrm{Res}_{s =  \frac{n_1+n_2}{2}-\alpha}E_{[n_1-1,n_2]}\boxtimes \mathcal{W}_2  & \alpha = 0\\
            \mathrm{Res}_{s = \frac{n_1+n_2}{2}-\alpha}E_{[n_1-1,n_2]}\boxtimes \mathcal{W}_1 + \mathrm{Res}_{s = \frac{n_1+n_2}{2}-\alpha}E_{[n_1,n_2-1]}\boxtimes \mathcal{W}_2 & 0<\alpha < n_1-1\\
            c\mathrm{Res}_{s = \frac{n_1+n_2}{2}-\alpha}E_{[n_1-1,n_2]}\boxtimes \mathcal{W}_1 + \mathrm{Res}_{s = \frac{n_1+n_2}{2}-\alpha}E_{[n_1,n_2-1]}\boxtimes \mathcal{W}_2 & \alpha = n_1-1
        \end{cases}
    \]
    By the induction hypothesis, for $n_1 > 1$, if $\alpha = n_1 -1$, the Whittaker supports of each of the two terms in $D^{(a)}\mathrm{Res}_{s = \frac{n_1+n_2}{2}-\alpha}E(\underline{s},g)$ are $[a^{n_1-1}]+[a^{n_2}]$ and $[a^{n-\alpha}]+[a^{\alpha-1}] = [a^{n_1-2}]+[a^{n_2+1}]$, respectively. If $\alpha = 0$, the Whittaker support of $D^{(a)}\mathrm{Res}_{s = \frac{n_1+n_2}{2}-\alpha}E(\underline{s},g)$ is $[a^{n-1}]$. In other cases, each term admitting Whittaker coefficient corresponds to $[a^{n-\alpha}]+[a^{\alpha-1}]$ and $[a^{n-\alpha-1}]+[a^{\alpha}]$, respectively. Since $0\leq \alpha \leq \min\{n_1,n_2\}-1$, it turns out that any case would not allow a Whittaker support corresponding to an orbit larger than $[a^{n-\alpha}] +[a^\alpha]$.
\end{proof}

\subsection{Eulerianity of Whittaker Coefficients}
For any Whittaker pair $(H,\phi)$ and any pure tensor $f = \bigotimes_v f_v $ in an automorphic representation $\pi = \bigotimes_v\pi_v$, we say its Whittaker coefficient $\mathcal{W}_{H,\phi}(f)$ is \emph{Eulerian} if $\mathcal{W}_{H,\phi}(f)$ factorizes into a tensor product
\[
    \mathcal{W}_{H,\phi}(f)\left(\prod_v g_v\right) = \bigotimes_v \mathcal{W}_v(f)\left(g_v\right)
\]
with $\mathcal{W}_v$ a function on the group $G(k_v)$. By multiplicity one theorems in \cite{shalika1974multiplicity},\cite{piatetski1979multiplicity}, when $\phi$ is a generic character of the maximal unipotent subgroup $U$, the Whittaker coefficient is Eulerian. A larger variety of cases are considered in \cite{gourevitch2021eulerianity}. In particular, in \cite[Theorem F]{gourevitch2021eulerianity}, the case when $\pi$ is a discrete spectrum automorphic representation is considered. We will be applying our method to cover this case as well as Eisenstein series induced from discrete spectrum data.

\begin{theorem}
    For $\pi$ in the discrete spectrum of $GL_n(\mathbb{A})$, any degenerate Whittaker coefficient of $\pi$ is Eulerian.
\end{theorem}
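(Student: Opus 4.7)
The plan is to reduce to the Eulerianity of iterated applications of the highest derivative $D^{(a)}$ via an induction on the rank. By the classification of M\oe glin--Waldspurger, any $\pi$ in the discrete spectrum of $GL_n(\bbA)$ is a generalized Speh representation $\Delta(\tau,n')$ for some cuspidal automorphic representation $\tau$ of $GL_a(\bbA)$ with $n = an'$. By Corollary~\ref{spehwhittaker}, the only degenerate Whittaker coefficients of $\Delta(\tau,n')$ that can be nonzero are those attached to the nilpotent orbit $[a^{n'}]$. An arbitrary Whittaker pair $(S,\phi)$ with $\phi$ in this orbit is dominated by the Levi-distinguished pair $(H,\phi)$ with $H = \mathrm{diag}(n-1,n-3,\ldots,1-n)$; by Theorem~\ref{gourevitchtheorem} the associated Whittaker coefficients are related by an integral operator $\mathcal{M}_H^S$ that is defined by an integral over a unipotent $\bbA$-group and therefore factorizes over the places of $k$. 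Hence Eulerianity of $\mathcal{W}_{H,\phi}$ on pure tensors propagates to $\mathcal{W}_{S,\phi}$, and it suffices to treat $\mathcal{W}_{H,\phi_{[a^{n'}]}}$, which by the proof of Corollary~\ref{spehwhittaker} equals the $n'$-fold composition $D^{(a)}\circ\cdots\circ D^{(a)}$.

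The heart of the argument is the claim that $D^{(a)}$ carries a pure tensor in $\Delta(\tau,n')$ to a pure tensor in $\Delta(\tau,n'-1)$, viewed via restriction as an automorphic representation of the upper-left $GL_{(n'-1)a}$-block. Realizing $\Delta(\tau,n')$ as the residue of the Eisenstein series attached to $\tau|\cdot|^{s_1}\boxtimes\cdots\boxtimes\tau|\cdot|^{s_{n'}}$, the proof of Proposition~\ref{spehlemma} together with Corollary~\ref{advgeomlemma} factors $D^{(a)}$ as $\mathcal{W}_a\circ\mathcal{A}_w\circ c_{P_{[(n'-1)a,a]}}$, where all but one of the Weyl-coset contributions vanish by Lemma~\ref{baiyinglemma} and Proposition~\ref{spehlemma}. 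The constant term of a residue of an Eisenstein series equals the residue of the constant term, which by \cite[II.1.7]{moeglin1995spectral} is a sum of intertwined copies of the original Eulerian inducing datum, and at the pole in question only one term survives and remains Eulerian. The intertwining operator $\mathcal{A}_w$ factors as a product of local normalized operators times a global normalization factor of the type computed in Proposition~\ref{propres}, so it preserves pure tensors. Finally $\mathcal{W}_a$ on the bottom-right $GL_a$-block is the generic Whittaker coefficient of the cuspidal $\tau$, which is Eulerian by the multiplicity-one theorem of Shalika and Piatetski-Shapiro.

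Stringing these together, the induction is immediate. The base case $n'=1$ is Shalika's theorem for $\tau$. For the inductive step, if $f\in\Delta(\tau,n')$ is a pure tensor, then $D^{(a)}f$ restricted to $GL_{(n'-1)a}$ is a pure tensor in $\Delta(\tau,n'-1)$, so the inductive hypothesis applied to the remaining composition $D^{(a)}\circ\cdots\circ D^{(a)}$ on $\Delta(\tau,n'-1)$ yields the Eulerianity of $\mathcal{W}_{H,\phi_{[a^{n'}]}}(f)$. The principal technical obstacle is verifying that the residue operator does not disrupt the factorization over places; this is resolved by noting that all the integral operators involved either act over a compact domain or admit meromorphic continuation in $\underline{s}$ with normalization factors that become global scalars at the residue point, so the residue operator commutes with each factor and preserves Eulerianity.
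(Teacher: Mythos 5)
Your proposal follows essentially the same route as the paper's own (very terse) proof: reduce to $\Delta(\tau,n')$, use Corollary~\ref{spehwhittaker} to isolate the orbit $[a^{n'}]$, observe that the relevant Whittaker coefficient equals the $n'$-fold composition of $D^{(a)}$, and propagate Eulerianity through the chain because each $D^{(a)}$ sends a pure tensor in $\Delta(\tau,n')$ to a pure tensor $f'\boxtimes \mathcal{W}_a(\tau)$ with $f'\in\Delta(\tau,n'-1)$. Your write-up is considerably more detailed than the paper's: you explicitly perform the reduction from an arbitrary Whittaker pair $(S,\phi)$ to the specific standard pair $(H,\phi)$, a step the paper leaves entirely implicit, and you explain why the residue operator does not obstruct factorization. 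Both additions are worth having. One small caution: in the paper's convention, $(H,\phi)\prec(S,\phi)$ means $(H,\phi)$ \emph{dominates} $(S,\phi)$, and Theorem~\ref{gourevitchtheorem} then gives $\mathcal{W}_{H,\phi}=\mathcal{M}_H^S(\mathcal{W}_{S,\phi})$, i.e.\ the dominating pair's coefficient is recovered from the dominated one. To get $\mathcal{W}_{S,\phi}$ Eulerian from $\mathcal{W}_{H,\phi}$ Eulerian you therefore want $(S,\phi)$ to dominate a standard pair conjugate to $(H,\phi)$ (this is what \cite[Prop.\ 3.2.3]{gourevitch2022reduction} supplies), not the other way around. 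As written, ``$(S,\phi)$ is dominated by $(H,\phi)$'' points the integral operator in the wrong direction; the conclusion survives, but the phrasing should be flipped to match the convention.
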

\begin{proof}
    By (\ref{geomlemma}), the only nonzero degenerate Whittaker coefficient of $f\in\Delta(\tau,n)$ is associated to the orbit $[a^n]$. This degenerate Whittaker coefficient is Eulerian because $D^{(a)}f$ decomposes as the product of a vector $f'\in \Delta(\tau,n-1)$ and a top Fourier coefficient of cuspidal representation $\tau$. The Eulerianity result follows from applying $D^{(a)}$ iteratively and the Eulerianity of the top Whittaker coefficient of a cuspidal automorphic representation.
\end{proof}
\begin{theorem}
    For Eisenstein series constructed from the isobaric sum representation $I(\tau_1,\tau_2;s)$ with $\tau_1\not\cong \tau_2$, if $a_1\neq a_2$, then any nonzero Whittaker coefficent is Eulerian.
\end{theorem}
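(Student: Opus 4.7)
The plan is to proceed by induction on $n = n_1 + n_2$, in direct analogy with the discrete-spectrum case treated in the preceding theorem. By Proposition \ref{notequal}, the only possibly nonzero derivatives of the Eisenstein series $E_{\underline{s}}$ attached to $I(\tau_1, \tau_2; \underline{s})$ are $D^{(a_1)},\, D^{(a_2)},$ and $D^{(a_1+a_2)}$. Consequently, every nonvanishing degenerate Whittaker coefficient $\mathcal{W}_{H, \phi_\lambda}$ on $E_{\underline{s}}$ arises from a partition $\lambda$ whose parts lie in $\{a_1, a_2, a_1+a_2\}$, and can be written as an iterated composition of these three derivative operators. It therefore suffices to exhibit a factorized (Eulerian) form for each single derivative step and then close up by induction.

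The crux of the argument uses the hypothesis $a_1 \neq a_2$ to ensure that for each $m \in \{a_1, a_2, a_1 + a_2\}$ only a single Weyl representative in Corollary \ref{advgeomlemma} actually contributes. Specifically, restricted to the indices $(r_1, r_2) \in \{(1,0), (0,1), (1,1)\}$ allowed by Proposition \ref{notequal}, the equation $r_1 a_1 + r_2 a_2 = m$ has a unique solution. I therefore expect the clean factorizations
\begin{align*}
    D^{(a_1)} E_{\underline{s}} &= E_{\underline{s}}^{[n_1-1,\, n_2]} \boxtimes \mathcal{W}_{\tau_1}, \\
    D^{(a_2)} E_{\underline{s}} &= E_{\underline{s}}^{[n_1,\, n_2-1]} \boxtimes \mathcal{W}_{\tau_2}, \\
    D^{(a_1+a_2)} E_{\underline{s}} &= E_{\underline{s}}^{[n_1-1,\, n_2-1]} \boxtimes \mathcal{W}_{\mathrm{gen}},
\end{align*}
where $\mathcal{W}_{\tau_i}$ is the generic Whittaker coefficient of a cuspidal vector in $\tau_i$ and $\mathcal{W}_{\mathrm{gen}}$ is the generic Whittaker coefficient on $GL_{a_1+a_2}$ of the rank-two generic principal series $I(\tau_1 |\cdot|^{\frac{n_1-1}{2}+s_1} \boxtimes \tau_2 |\cdot|^{\frac{n_2-1}{2}+s_2})$.

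In each case the right-hand Whittaker factor is Eulerian: $\mathcal{W}_{\tau_i}$ by Shalika's multiplicity-one theorem for the generic Whittaker model of a cuspidal representation of $GL_{a_i}$, and $\mathcal{W}_{\mathrm{gen}}$ by uniqueness of the Whittaker model for a generic irreducible representation of $GL_{a_1+a_2}$. The left-hand factor is again an Eisenstein series of the same isobaric type $I(\Delta(\tau_1, n_1'), \Delta(\tau_2, n_2'))$ on a smaller general linear group, still satisfying $\tau_1 \not\cong \tau_2$ and $a_1 \neq a_2$, so the induction hypothesis applies and produces an Euler factorization of the remaining iterated derivative.

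The main obstacle, and the precise point where $a_1 \neq a_2$ is indispensable, is verifying uniqueness of the surviving Weyl term at every derivative step. If instead $a_1 = a_2 = a$, then $D^{(a)}$ would combine the two contributions from $(r_1, r_2) = (1, 0)$ and $(0, 1)$, giving a genuine sum $E^{[n_1-1,n_2]} \boxtimes \mathcal{W}_{\tau_1} + E^{[n_1,n_2-1]} \boxtimes \mathcal{W}_{\tau_2}$ rather than a pure tensor, which would destroy Eulerianity. A secondary bookkeeping step is to dispose of the base cases $(n_1, n_2) \in \{(1,0), (0,1), (1,1)\}$: the first two are instances of cuspidal Eulerianity for $\tau_1$ or $\tau_2$, and $(1,1)$ is the Eulerianity of the generic Whittaker coefficient of the generic principal series $I(\tau_1|\cdot|^{s_1}, \tau_2|\cdot|^{s_2})$ on $GL_{a_1+a_2}$, which again follows from multiplicity one.
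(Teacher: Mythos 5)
Your proposal is correct and follows essentially the same strategy as the paper: identify the only surviving derivatives as $D^{(a_1)}, D^{(a_2)}, D^{(a_1+a_2)}$ via Proposition \ref{notequal}, observe that $a_1\neq a_2$ forces each derivative to consist of a single factored term $E^{[\cdots]}\boxtimes\mathcal{W}$, invoke multiplicity-one (Shalika) for the local Whittaker factor, and iterate. Your added remarks on why $a_1 = a_2$ would break the argument and on the explicit base cases are consistent with, though slightly more detailed than, the paper's treatment.
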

\begin{proof}
    The only nonzero Whittaker coefficients of the Eisenstein series are those which contains only $a_1, a_2$ and $a_1+a_2$. In the proof of Proposition \ref{isobaricnonequal}, if $a_1\neq a_2$, the derivative $D^{(a_1)},D^{(a_2)}$ of the Eisenstein series contains only one term $E^{[n_1-1,n_2]}\otimes \mathcal{W}_{a_1}$ or $E^{[n_1,n_2-1]}\otimes \mathcal{W}_{a_2}$, with $\mathcal{W}_{a_1},\mathcal{W}_{a_2}$ top Whittaker coefficients of $\tau_1,\tau_2$, respectively, and thus are Eulerian functions on $GL_{a_1}, $ and $GL_{a_2}$, respectively. Similarly, for the derivative $D^{(a_1+a_2)}$, the derivative of the Eisenstein series contains only one term $E^{[n_1-1,n_2-1]}\boxtimes\mathcal{W}_{a_1+a_2}$, where $\mathcal{W}_{a_1+a_2}$ is a top Whittaker coefficient for a cuspidal Eisenstein series on $GL_{a_1+a_2}$. The Euleranity of any nonzero Whittaker coefficient of the Eisenstein series follows from applying this process iteratively.
\end{proof}

\begin{theorem}
    In all cases discussed in this paper, the top degenerate Whittaker coefficient is Eulerian.
\end{theorem}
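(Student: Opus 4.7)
The plan is to handle the cases not already subsumed by the two preceding theorems: the isobaric sum $I(\tau_1,\tau_2;\underline{s})$ with $\tau_1\not\cong\tau_2$ and $a_1=a_2$, and the residues of $E_{\underline{s}}$ at $s_1-s_2=\frac{n_1+n_2}{2}-\alpha$ in the case $\tau_1\cong\tau_2\cong\tau$. In all of these the strategy is uniform: realize the top degenerate Whittaker coefficient, via the composition-of-derivatives proposition of this section, as an iterated derivative $D^{(\lambda_1)}\circ\cdots\circ D^{(\lambda_r)}$, where $\lambda=(\lambda_1\geq\cdots\geq\lambda_r)$ is the partition attached to the maximal orbit in the Whittaker support; then proceed by induction on $n=n_1+n_2$, showing at each step that the outermost $D^{(\lambda_1)}$ produces a single surviving summand of the shape (Eisenstein series or residue on a lower-rank group) $\boxtimes$ (generic Whittaker coefficient on a $GL_{\lambda_1}$ tail).

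For the isobaric case with $a_1=a_2=a$, by Proposition \ref{isobaricnonequal} the top orbit corresponds to $((2a)^{n_2}a^{n_1-n_2})$, so $\lambda_1=2a$. Applying Corollary \ref{advgeomlemma} with $m=2a$, Proposition \ref{notequal} and its proof force the only nonzero summand to come from $(r_1,r_2)=(1,1)$, giving $E^{[n_1-1,n_2-1]}\boxtimes\mathcal{W}_{2a}$, where $\mathcal{W}_{2a}$ is the generic Whittaker coefficient of a two-block cuspidal isobaric principal series on $GL_{2a}$. This tail factor is Eulerian by the multiplicity-one theorems of Shalika and Piatetski-Shapiro, while the other factor is an Eisenstein series of the same type on a smaller group whose top Whittaker coefficient is Eulerian by the inductive hypothesis. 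For the residue case at $s_1-s_2=\frac{n_1+n_2}{2}-\alpha$ with $\alpha>0$, the top orbit $[a^{n-\alpha}]+[a^\alpha]$ begins with a $(2a)$-block, and the same mechanism, combined with Proposition \ref{propres} and the analysis in the proof of the residue Whittaker-support proposition, identifies $D^{(2a)}\mathrm{Res}_{s_1-s_2=\frac{n_1+n_2}{2}-\alpha}E_{\underline{s}}$ with a single term $\mathrm{Res}\,E^{[n_1-1,n_2-1]}\boxtimes\mathcal{W}_{2a}$; iterating reduces to the case $\alpha=0$, where the residue lies in $\Delta(\tau,n)$ and Eulerianity is covered by the first theorem of this subsection.

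The main obstacle will be verifying that, at each inductive step, the geometric-lemma expansion of Corollary \ref{advgeomlemma} really does collapse to a single term once it is read through the top Whittaker character. This splits into two bookkeeping tasks: first, that every other double-coset summand carries a Whittaker support attached to a strictly smaller orbit, so that the pairing with the top character vanishes (essentially done in the proofs of Propositions \ref{notequal} and \ref{isobaricnonequal}); and second, in the residue setting, that the chosen simple pole $s_1-s_2=\frac{n_1+n_2}{2}-\alpha$ survives in exactly one summand after the possible poles and zeros of the normalization factor $r(w,\underline{s})$ of $\mathcal{A}_w$ are accounted for, as tracked in the proof of Proposition \ref{propres}. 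Once these are assembled, the argument closes by invoking the classical Eulerianity of the generic Whittaker coefficient on the $GL_{\lambda_1}$ cuspidal tail and the previously established Eulerianity for discrete spectrum representations.
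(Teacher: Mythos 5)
Your proposal follows essentially the same route as the paper: expand the top degenerate Whittaker coefficient as the iterated derivative $D^{(\lambda_1)}\circ\cdots\circ D^{(\lambda_r)}$ with $\lambda_1=a_1+a_2$, observe via Corollary~\ref{advgeomlemma} and Propositions~\ref{notequal}, \ref{propres} that each application of $D^{(a_1+a_2)}$ (or $D^{(2a)}$) produces exactly one surviving Eulerian summand $E^{[n_1-1,n_2-1]}\boxtimes\mathcal{W}_{a_1+a_2}$, iterate until the remaining block is a generalized Speh representation, and invoke the Eulerianity of Whittaker coefficients for cuspidal/discrete-spectrum representations. The only differences are presentational: you split off the $a_1=a_2$ isobaric case and the residue case explicitly and track the descent $\alpha\mapsto\alpha-1$ of the pole parameter, whereas the paper states the argument once for the uniform partition $[(a_1+a_2)^{\min(n_1,n_2)}a_i^{|n_1-n_2|}]$ and cites \cite[Theorem F]{gourevitch2021eulerianity} directly for the terminal Speh factor; your version is slightly more careful in that the residue case actually terminates at $\Delta(\tau,n-2\alpha)$ rather than $\Delta(\tau,|n_1-n_2|)$, which the paper's terse wording blurs.
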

\begin{proof}
    The partition associated to the largest orbit allowing a top degenerate Whittaker coefficient is $[(a_1+a_2)^{\min\{n_1,n_2\}} a_i^{|n_1-n_2|}]$. The derivative $D^{(a_1+a_2)}E_{\underline{s}}$  
    is an Eulerian function $E^{[n_1-1,n_2-1]}\boxtimes\mathcal{W}_{a_1+a_2}$. After applying $D^{(a_1+a_2)}$ multiple times such that the resulting function belongs to the generalized Speh representation $\Delta(\tau_i,|n_1-n_2|)$ , whose Whittaker support on $GL_{a_i|n_1-n_2|}$ is $[a_i^{|n_1-n_2|}]$. By \cite[Theorem F]{gourevitch2021eulerianity}, it is an Eulerian function on $GL_{a_i|n_1-n_2|}$.
\end{proof}

\bibliographystyle{alpha}
\bibliography{main}
\end{document}